\newtheorem{thm}{Theorem}[section]
\newtheorem{lem}[thm]{Lemma}
\newtheorem{definition}[thm]{Definition}
\newtheorem{proposition}[thm]{Proposition}
\newtheorem{rmk}[thm]{Remark}
\title{Explicit evaluation of the Stokes matrices for certain quantum confluent hypergeometric equations}
\author{Jinghong Lin and Xiaomeng Xu}
\begin{document}

\date{}

\newcommand{\Addresses}{{
  \bigskip
  \footnotesize
\noindent \textsc{School of Mathematical Sciences, Peking University, Beijing 100871, China}\par\nopagebreak
  \textit{E-mail address}: \texttt{linjinghong@pku.edu.cn}
}\\
\\
\footnotesize
\noindent \textsc{
School of Mathematical Sciences \& Beijing International Center
for Mathematical Research, Peking University, Beijing 100871, China}\par\nopagebreak
  \textit{E-mail address}: \texttt{xxu@bicmr.pku.edu.cn}
}

\maketitle              



\begin{abstract}
In this paper, we compute the Stokes matrices of a special quantum confluent hypergeometric system with Poincar\'e rank one. The sources of the interests in the Stokes phenomenon of such system are from representation theory and the theory of isomonodromy deformation.
\end{abstract}

\section{Introduction}

Let us take the Lie algebra ${\mathfrak {gl}_n}$ over the field of complex numbers, and its universal enveloping algebra $U\left({\mathfrak {gl}}_n\right)$ generated by $\{e_{ij}\}_{1\le i,j\le n}$ subject to the relation $[e_{ij},e_{kl}]=\delta_{jk}e_{il}-\delta_{li} e_{kj}$. Let $T=\left(T_{ij}\right)$ be the $n\times n$ matrix with entries valued in $U\left({\mathfrak {gl}}_n\right)$
\begin{eqnarray*}
T_{ij}=e_{ij}, \ \ \ \ \ \text{for} \ 1\le i,j\le n.
\end{eqnarray*}
Given any finite-dimensional irreducible representation $L\left(\lambda\right)$ of $\mathfrak{gl}_n$ with a highest weight $\lambda$, the associated quantum confluent hypergeometric system,
for $F\left(z\right)\in {\rm End}\left(L\left(\lambda\right)\right)\otimes {\rm End}\left(\mathbb{C}^n\right)$ (an $n\times n$ matrix function with entries in ${\rm End}\left(L\left(\lambda\right)\right)$), is
\begin{eqnarray}\label{chqeq}
\frac{\mathrm{d}F}{\mathrm{d}z}=h\left(\mathrm{i} u+\frac{1}{2\pi \mathrm{i} }\frac{T}{z}\right)\cdot F,
\end{eqnarray}
where $\mathrm{i}=\sqrt{-1}$, $h$ is a nonzero real parameter and $u=\sum_{i=1}^n u_i\otimes E_{ii}\in U\left({\mathfrak {gl}}_n\right) \otimes {\rm End}\left(\mathbb{C}^n\right)$ with $u_1,...,u_n$ complex parameters, $E_{ii}$ the $n\times n$ matrix whose $\left(i,i\right)$-entry is $1$ and other entries are $0$. Here the action of the coefficient matrix on $F\left(z\right)$ is given by matrix multiplication and the representation of ${\mathfrak {gl}}_n$ on $L(\lambda)$. The equation \eqref{chqeq} has a unique formal solution $\hat{F}(z)$ around
$z = \infty$. The standard theory of resummation states that there exist certain  sectorial regions around $z=\infty$, such that on each of these sectorial regions there is a unique (therefore canonical) holomorphic solution with the prescribed asymptotics $\hat{F}(z)$. These solutions are in general different (that reflects the Stokes phenomenon), and the transition between them can be measured by a pair of Stokes matrices $S_{h\pm}\left(u,T\right)\in{\rm End}\left(L\left(\lambda\right)\right)\otimes{\rm End}\left(\mathbb{C}^n\right)$. See \cite{Xu2} for more details.

The equation can be seen as a Knizhnik–Zamolodchikov (KZ) equation with irregular singularities \cite{FMTV, Re}. It was proved in \cite{Xu2} that the Stokes matrices of the equation \eqref{chqeq} give rise to a representation of quantum group $U_q(\frak{gl}_n)$ (with $q=\mathrm{e}^{\frac{h}{2}}$ and $h$ is a real number) on the vector space $L(\lambda)$. Furthermore, the WKB approximation (i.e., the $h\rightarrow \infty$ leading asymptotics) of the Stokes matrices was used in \cite{Xu3} to give the first transcendental realization of the crystal structures in the representation. See \cite{HKbook} and \cite{FRT} for the theory of quantum groups and crystal basis. Along the way, the special case of \eqref{chqeq}, as $u_1=u_2=\cdots=u_{n-1}=0$ and $u_n=1$, plays a crucial role. That is the equation
\begin{eqnarray}\label{introqeq}
\frac{\mathrm{d}F}{\mathrm{d}z}=h\left(\mathrm{i} E_n+\frac{1}{2\pi \mathrm{i} }\frac{T}{z}\right)\cdot F,
\end{eqnarray}
where $E_n=1\otimes E_{nn}\in U\left({\mathfrak {gl}}_n\right) \otimes {\rm End}\left(\mathbb{C}^n\right)$. The equation also plays an important role from the perspective of isomonodromy deformation \cite{JMU}.  The main result of this paper is the following theorem, which computes the Stokes matrices $S_{h\pm}\left(E_n,T\right)\in{\rm End}\left(L\left(\lambda\right)\right)\otimes{\rm End}\left(\mathbb{C}^n\right)$ of the system \eqref{introqeq} explicitly. See Section \ref{qsecextension}.

\begin{thm}\label{introqstokes}
The Stokes matrix $S_{h +}\left(E_{n},T\right)$ of the system \eqref{introqeq} takes the block matrix form
\begin{equation*}
S_{h+}\left(E_{n},T\right)=\left(\begin{array}{cc}
    \mathrm{e}^{\frac{-h T^{\left(n-1\right)}}{2}} & b_{h+}  \\
    0 & \mathrm{e}^{\frac{-he_{nn}}{2}}
  \end{array}\right),
  \end{equation*}
where $b_{h+}=\left(\left(b_{h+}\right)_1,...,\left(b_{h+}\right)_{n-1}\right)^{\intercal}$ with entries as follows
\begin{equation*}
\left(b_{h+}\right)_k= -\sum_{i=1}^{n-1}\sum_{j=1}^{n-1} \frac{(h\mathrm{e}^{-\frac{\pi \mathrm{i}}{2}})^{\frac{h}{2\pi \mathrm{i} }(\ell^{(n-1)}_i+n-2)}}{(h\mathrm{e}^{\frac{\pi \mathrm{i}}{2}})^{\frac{h }{2\pi \mathrm{i} }\left(e_{n n}+1\right)}} \frac{\prod_{l=1,l\neq i}^{n-1} \Gamma\left(1+\frac{h}{2\pi \mathrm{i} }(\ell^{(n-1)}_i-1-\ell^{(n-1)}_l)\right)}{\prod_{l=1}^{n}\Gamma\left(1+\frac{h}{2\pi \mathrm{i} }(\ell^{(n-1)}_i-1-\ell^{(n)}_l)\right)} (\mathrm{Pr}_{i})_{k j} \cdot h e_{j n}.
\end{equation*}
Here the elements $\ell^{\left(n-1\right)}_i\in \mathrm{End}\left(L\left(\lambda\right)\right)$, $\mathrm{Pr}_{i}\in {\rm End}\left(L\left(\lambda\right)\right)\otimes{\rm End}\left(\mathbb{C}^n\right)$, arising from representation theory, are given in Definition \ref{ell} and Definition \ref{def Pr}. And the use of the matrix gamma function is explained in \eqref{matrixgamma}.
\end{thm}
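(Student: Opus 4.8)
The plan is to reduce the operator‑valued system \eqref{introqeq} to the classical (scalar) confluent hypergeometric connection problem, exploiting two special features of the present case: the irregular type $\mathi h E_n=\mathi h\,(1\otimes E_{nn})$ is block‑diagonal with a single nonzero entry, and the residue $\frac{h}{2\pi\mathi}T$ becomes simultaneously triangularizable once one passes to the Gelfand--Tsetlin picture for the chain $\gl_{n-1}\subset\gl_n$.

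\emph{Shape of the matrix.} Since the leading coefficient $\mathi h E_n$ has exactly the two eigenvalues $0$ (on $\mathbb{C}^{n-1}$) and $\mathi h$ (on $\mathbb{C}\cdot e_n$), the resummation theory recalled in \cite{Xu2} yields a single pair of anti‑Stokes rays, and each Stokes matrix is unipotent with respect to the flag $\mathbb{C}^{n-1}\subset\mathbb{C}^{n}$ up to the formal monodromy. Splitting $F$ along this flag, the upper‑left $(n-1)\times(n-1)$ block of \eqref{introqeq} carries no exponential and is a regular‑singular block with residue $\frac{h}{2\pi\mathi}T^{(n-1)}$, while the last row carries the factor $\mathe^{\mathi h z}$; by the block‑triangular structure the diagonal blocks of $S_{h+}$ are the prescribed half formal‑monodromy factors $\mathe^{-hT^{(n-1)}/2}$ and $\mathe^{-he_{nn}/2}$, so the theorem reduces to identifying the off‑diagonal block $b_{h+}$.

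\emph{Computing $b_{h+}$.} The vector $b_{h+}$ is the connection coefficient between the sectorial solution normalized by $\hat F$ at $z=\infty$ and the analytic continuation of the canonical fundamental solution at $z=0$ (equivalently, after a Laplace‑type transform in $z$ one gets a Fuchsian confluent hypergeometric system with singularities at $0$ and $\mathi h$, and $b_{h+}$ is its connection matrix). To make this scalar I would decompose $L(\lambda)$ into the joint eigenspaces of the commuting operators built from $T^{(n-1)}$ and $e_{nn}$; this is where Definition \ref{ell} and Definition \ref{def Pr} enter, $\ell^{(n-1)}_i$ being the shifted $\gl_{n-1}$‑branching eigenvalues of $T^{(n-1)}$, $\ell^{(n)}_l$ the scalars by which the full matrix $T$ acts on the irreducible summands of $\mathbb{C}^{n}\otimes L(\lambda)$, and $\mathrm{Pr}_i$ the associated Gelfand--Tsetlin idempotents intertwining $\mathbb{C}^{n}\otimes L(\lambda)$. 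On each joint eigenspace the connection problem is the genuinely classical Kummer one, whose answer is the ratio $\prod_{l\neq i}\Gamma(1+\frac{h}{2\pi\mathi}(\ell^{(n-1)}_i-1-\ell^{(n-1)}_l))\big/\prod_{l}\Gamma(1+\frac{h}{2\pi\mathi}(\ell^{(n-1)}_i-1-\ell^{(n)}_l))$, multiplied by the normalization factors $(h\mathe^{\mp\pi\mathi/2})^{\frac{h}{2\pi\mathi}(\cdots)}$ coming from the chosen branches of $z^{\frac{h}{2\pi\mathi}T}$ and $\mathe^{\mathi h z}$ along the relevant rays; re‑expressing the idempotents in the standard basis of $\mathbb{C}^n$ produces the factor $(\mathrm{Pr}_i)_{kj}\,he_{jn}$, and assembling everything gives the displayed formula for $(b_{h+})_k$. (The companion matrix $S_{h-}$ follows from the same argument with the opposite orientation.)

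\emph{Main obstacle.} The crux is the last reduction: one must justify that the non‑scalar residue can legitimately be treated eigenspace‑by‑eigenspace even though the ``numerator'' data $he_{jn}$ — the off‑diagonal entries of $T$ — do not commute with the $\ell$'s, so that the order of the factors in the final formula is forced and must be tracked faithfully through the reduction; a subsidiary difficulty is the careful bookkeeping of arguments and branch cuts (the $\mathe^{\pm\pi\mathi/2}$ prefactors) across the transform and the resummation, and checking that the classical confluent hypergeometric connection matrix with these particular parameters and normalizations reproduces exactly the displayed ratio of Gamma functions.
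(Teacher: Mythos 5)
Your overall strategy---reduce to classical confluent hypergeometric asymptotics, with the diagonal blocks of $S_{h+}$ given by the half formal monodromy and the off-diagonal block read off from a Kummer-type connection formula expressed through the Gelfand--Tsetlin data $\ell^{(n-1)}_i$, $\ell^{(n)}_l$, $\mathrm{Pr}_i$---is the same high-level plan as the paper's. But there is a genuine gap at exactly the step you yourself flag as the crux: the ``eigenspace-by-eigenspace'' reduction is not legitimate as stated. The operators $\ell^{(n-1)}_i$ and $e_{nn}$ are simultaneously diagonal on the Gelfand--Tsetlin basis, but the entries $e_{jn}$, $e_{nj}$ of $T$ shift the patterns (Proposition \ref{3}, Lemma \ref{4}), so $T$ does not preserve the joint eigenspaces and the system \eqref{introqeq} does not restrict to them; there is no family of scalar Kummer problems to which the equation decomposes. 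The paper's substitute for this step is a noncommutative gauge transformation built from quantum minors (Proposition \ref{quantum PQ}) which diagonalizes only the upper-left block, producing \eqref{Tn-1} whose last-column and last-row entries $\alpha^{(n-1)}_k$, $\beta^{(n-1)}_k$ obey the shift relations $[\ell^{(n-1)}_j,\alpha^{(n-1)}_i]=\delta_{ij}\alpha^{(n-1)}_j$ of Lemma \ref{commu rels}; one then exhibits an explicit solution with operator-valued hypergeometric parameters (Proposition \ref{qsolthm}, whose verification needs the quantum partial-fraction identity of Proposition \ref{quantum ciq}), extracts the transition matrices $U_{h,0}$, $U_{h,-1}$ from the classical asymptotics while moving $z$-powers across the $\alpha^{(n-1)}_k$ via those same relations (Proposition \ref{6}), forms $S_{h+}(E_n,T_{n-1})$ (Theorem \ref{q9}), and only at the very end conjugates back, using $[(\mathcal{P}_{h,n-1})_{ij},\ell^{(n-1)}_k]=0$ and \eqref{Pk=PQ} to produce the factor $(\mathrm{Pr}_i)_{kj}\,he_{jn}$ in the stated order.

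This ordering issue is not cosmetic: it changes the answer. The shifts $\ell^{(n-1)}_i-1-\ell^{(n-1)}_l$ and $\ell^{(n-1)}_i-1-\ell^{(n)}_l$ in the Gamma arguments (absent from the classical formula of Proposition \ref{main stokes}) arise precisely from commuting $\alpha^{(n-1)}_j$ (ultimately $he_{jn}$) to the right of functions of the $\ell$'s at the end of the proof of Theorem \ref{q9}; a commutative, eigenspace-by-eigenspace argument would output the unshifted classical arguments and hence an incorrect formula. So to complete your proof you must supply the diagonalization by quantum minors (or an equivalent device for handling the noncommuting residue) together with the explicit commutation bookkeeping; the Laplace-transform/Fuchsian reformulation you mention parenthetically is not the route taken in the paper and would face the same noncommutativity obstruction.
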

Similarly, one can compute explicitly the quantum Stokes matrix $S_{h-}(E_n,T)$. The proof of Theorem \ref{introqstokes} mainly relies on the known asymptotic behavior of confluent hypergeometric functions, which is also used in computation of Stokes matrices of the following special (classical) confluent hypergeometric system for a function $F\left(z\right)\in \mathrm{GL}\left(n,\mathbb{C}\right)$
\begin{equation} \label{intro:classical original equation} \frac{\mathrm{d}}{\mathrm{d} z} F\left(z\right)=\left(\mathrm{i} E_n-\frac{1}{2\pi \mathrm{i} }\frac{A}{z}\right) F\left(z\right), 
\end{equation}
where $A=\left(a_{i j}\right)\in \mathfrak{gl}_n$ and $\mathrm{GL}\left(n,\mathbb{C}\right)$ is the general linear group of degree $n$ over the field $\mathbb{C}$. More specifically, we use a gauge transformation that diagonalizes the upper left $(n-1)\times (n-1)$ submatrix of $A$. The Stokes matrices between the equation \eqref{intro:classical original equation} and the transformed equation 
\begin{equation}\label{diagsys}
    \frac{\mathrm{d}}{\mathrm{d} z} F\left(z\right)=\left(\mathrm{i} E_n-\frac{1}{2\pi \mathrm{i} }\frac{A_{n-1}}{z}\right) F\left(z\right),
\end{equation}
are conjugate (see Proposition \ref{10}), where $A_{n-1}$ is given in \eqref{zhuazi}. The solution $F\left(z;E_n,A_{n-1}\right)$ around $z=0$ of \eqref{diagsys} can be written in terms of confluent hypergeometric functions (see Proposition \ref{classical solution}). The transition between $F\left(z;E_n,A_{n-1}\right)$ and the unique solution $F_i\left(z;E_n,A_{n-1}\right)$ with the prescribed asymptotics on the sectorial region around $z=\infty$ is measured by some matrix $U_i$. Considering the asymptotic behavior of $F\left(z;E_n,A_{n-1}\right)$ on this sectorial region, we can write out $U_i$ explicitly (see Proposition \ref{mainpro}). Moreover, we obtain explicit expression of the Stokes matrices as quotients of two $U_i$ on adjacent sectorial regions (see Proposition \ref{S+ diag sys} and Proposition \ref{main stokes} for equations \eqref{intro:classical original equation} and \eqref{diagsys}). See \cite{BJL} for an example where $A$ is a $2\times 2$ matrix.

Besides asymptotics, we need to deal with commutation relations in $U\left(\mathfrak{gl}_n\right)$ when proving Theorem \ref{introqstokes}. Replacing minors, eigenvalues of $A$ by quantum minors (see Definition \ref{qminor}), quantum eigenvalues (see Definition \ref{ell}) of $T$, we can transform the equation \eqref{introqeq} into the following form 
\begin{equation}\label{q diag sys}
    \frac{\mathrm{d}F}{\mathrm{d}z}=h\left(\mathrm{i} E_n+\frac{1}{2\pi \mathrm{i} }\frac{T_{n-1}}{z}\right)\cdot F, 
\end{equation}
where $T_{n-1}$ is the diagonalization of the upper left $(n-1)\times (n-1)$ submatrix of $T$, given explicitly in \eqref{Tn-1}. Then we can write out the solution $F_h\left(z;E_n,T_{n-1}\right)$ (see Proposition \ref{qsolthm}) and the transition matrix $U_{h,i}$ (see Proposition \ref{6}). At last, we get the Stokes matrices in Theorem \ref{introqstokes} explicitly as in the classical case. Along the way, we need to use various commutation relations of quantum minors.

The formal solutions, actual solutions and Stokes matrices of equations \eqref{introqeq} and \eqref{intro:classical original equation} are related to each other by deformation quantization. More precisely, on the one hand, following Proposition \ref{uniformal}, the equation \eqref{introqeq} has a unique formal solution
\begin{equation*}
\hat{F}_h\left(z;E_n,T\right)=\hat{H}\left(z\right) \mathrm{e}^{{h\mathrm{i} E_nz}}z^{\frac{h\delta_{n-1}\left(T\right)}{2\pi\mathrm{i}}}, \ \ \ {\it for} \ \hat{H}=\mathrm{Id}+H_1z^{-1}+H_2z^{-2}+\cdot\cdot\cdot,
\end{equation*}
where $\delta_{n-1}\left(T\right)$ is defined in \eqref{qdeltan-1} and the coefficients $H_m\in {\rm End}(L(\lambda))\otimes{\rm End}(\mathbb{C}^n)$ satisfy the following recursive relation
\begin{eqnarray*}
\left[H_{m+1}, \mathrm{i}E_n\right]=\left(\frac{ m}{h}+\frac{T}{2\pi \mathrm{i}}\right)\cdot H_{m}-H_{m}\cdot   \frac{\delta_{n-1}\left(T\right)}{2\pi \mathrm{i}}.
\end{eqnarray*}
As a result, the $(k,l)$-entry $H_{m,kl}\in \mathrm{End}\left(L\left(\lambda\right)\right)$ of $H_m$ is given by (the representation on $L(\lambda)$ of) an element of filtration degree $2m$ in $U\left({\mathfrak {gl}}_n\right)$. 

On the other hand, the equation \eqref{intro:classical original equation} has a unique formal solution 
\begin{equation*}
\hat{F}\left(z;E_n,A\right)=\hat{h}(z)z^{-\frac{\delta_{n-1}\left(A\right)}{2\pi \mathrm{i} }}\mathrm{e}^{\mathrm{i} E_n z}, \ \ \ {\it for} \ \hat{h}=\mathrm{Id}+h_1z^{-1}+h_2z^{-2}+\cdot\cdot\cdot,
\end{equation*}
where $\delta_{n-1}(A)$ is defined in \eqref{deltan-1} and the coefficients $h_m\in {\rm End}(\mathbb{C}^n)$ satisfy the recursive relation
\begin{equation*}
    \left[h_{m+1},\mathrm{i} E_n\right]=\left(m-\frac{A}{2\pi \mathrm{i} }\right)\cdot h_m+h_m\cdot \frac{\delta_{n-1}\left(A\right)}{2\pi \mathrm{i} }.
\end{equation*}
We find that $h_m$ satisfies the same recursive relation as $H_m$ (provided replacing $h$,
$T$ and $\delta_{n-1} (T)$ by $1$, $-A$ and $-\delta_{n-1}(A)$) and the $(k,l)$-entry $h_{m,kl}$ of $h_m$ is a degree $2m$ polynomial of the variables $a_{i j}$. Thus, the equation \eqref{introqeq} provides us a canonical lift of the elements $h_{m,k l}$ from the Poisson algebra $\mathrm{Sym}\left(\mathfrak{gl}_n\right)$ to the elements $H_{m,k l}$ in the algebra $U\left(\mathfrak{gl}_n\right)$. 
 
The coefficients of the formal power series solution already encode all the analytic data. For example, the Stokes matrices $S_{h\pm}(E_n,T)$ and $S_\pm(E_n,A)$ of \eqref{introqeq} and \eqref{intro:classical original equation} are defined from the Borel-Laplace transforms of the power series $\sum_m H_m z^{-m}$ and $\sum_m h_mz^{-m}$ respectively. More directly, using the associated function method, see e.g., \cite[Chapter 9]{Balser} and \cite{LuS,Yokoyama1988}, the Stokes matrices can be obtained from the regularized limit of the matrices $h_m\in { \mathrm{Sym}}(\frak{gl}_n)\otimes {\rm End}(\mathbb{C}^n)$ and $H_m\in U(\frak{gl}_n)\otimes {\rm End}(\mathbb{C}^n)$ as $m\rightarrow \infty$ respectively. Therefore (after taking
the completion of $\mathrm{Sym}({\frak {gl}}_n)$ and the representation $L(\lambda)$ of $U(\frak{gl}_n)$), the Stokes matrices $S_{h\pm}(E_n,T)$ of \eqref{introqeq}, seen as $n\times n$ matrices with entries valued in ${\rm End}(L(\lambda))$, are the quantization of the classical Stokes matrices $S_\pm(E_n,A)$ of \eqref{intro:classical original equation}, seen as in $\widehat{ \mathrm{Sym}}(\frak{gl}_n)\otimes {\rm End}(\mathbb{C}^n)$
 ($n\times n$ matrix valued analytic functions of the parameter $A\in {\frak {gl}}_n^*$). By this reason, we call $S_{h\pm}(E_n,T)$ the quantum Stokes matrices. We refer 
 readers to \cite{Xu2} for the quantum Stokes matrices of the universal quantum differential equation \eqref{chqeq} with a second order pole, and to \cite{Xu3} for the quantum Stokes matrices of the universal quantum differential equation \eqref{chqeq} with an arbitrary order pole.



\vspace{3mm}
The organization of the paper is as follows. Section \ref{sec2} computes explicitly the Stokes matrices of the classical linear system \eqref{intro:classical original equation}. Section \ref{beginsec} computes explicitly the quantum Stokes matrices of the quantum linear system \eqref{introqeq}. The structure of Section \ref{beginsec} is parallel to the one of Section \ref{sec2}, that is, the subsections of Section \ref{sec2} have quantum analogs in Section \ref{beginsec}.
 
\subsection*{Acknowledgements}
\noindent
We would like to thank Yiming Ma and Qian Tang for their discussion and valuable feedback on the paper. The authors are supported by the National Key Research and Development Program of China (No. 2021YFA1002000) and by the National Natural Science Foundation of China (No. 12171006).

\section{Stokes matrices of a special (classical) confluent hypergeometric system}\label{sec2}
This section computes explicitly the Stokes matrices of the linear system \eqref{intro:classical original equation}. In particular, Section \ref{secStokes} gives the preliminaries of Stokes data of meromorphic linear systems \eqref{intro:classical original equation}. Section \ref{diagonalization}
introduces a gauge transformation that diagonalizes the upper left block of the equation \eqref{intro:classical original equation}. Section \ref{sol and stokes} studies asymptotics of the solution of the equation after gauge transformation and then derives the explicit expression of its Stokes matrices via the asymptotics. Section \ref{secextension} then obtains the Stokes matrices of the linear system \eqref{intro:classical original equation}. 
\subsection{Stokes matrices}\label{secStokes}
We first consider a special (classical) confluent hypergeometric system for a function $F\left(z\right)\in \mathrm{GL}\left(n,\mathbb{C}\right)$
\begin{equation} \label{classical original equation} \frac{\mathrm{d}}{\mathrm{d} z} F\left(z\right)=\left(\mathrm{i} E_n-\frac{1}{2\pi \mathrm{i} }\frac{A}{z}\right) F\left(z\right). 
\end{equation}
Let us denote by $A^{\left(k\right)}$ the upper left $k\times k$ submatrix of $A=\left(a_{i j}\right)\in \mathfrak{gl}_n$, and by $\lambda^{\left(k\right)}_1\left(A\right),...,\lambda^{\left(k\right)}_{k}\left(A\right)$, the eigenvalues of $A^{\left(k\right)}$. Let us make the assumption that 
\begin{equation}\label{nonres}
\frac{1}{2\pi \mathrm{i} }\left(\lambda^{\left(n\right)}_i\left(A\right)-\lambda^{\left(n\right)}_j\left(A\right)\right)\notin \mathbb{Z}_{>0}, \text{ for all }  1\leq i,j \leq n.
\end{equation}
Under the assumption, the system is nonresonant. Then

\begin{proposition}
The system \eqref{classical original equation} has a unique formal solution of the form
\begin{equation*}
\hat{F}\left(z;E_n,A\right)=\hat{h}(z)z^{-\frac{\delta_{n-1}\left(A\right)}{2\pi \mathrm{i} }}\mathrm{e}^{\mathrm{i} E_n z}, \ \ \ {\it for} \ \hat{h}=\mathrm{Id}+h_1z^{-1}+h_2z^{-2}+\cdot\cdot\cdot,
\end{equation*}   
where $h_m$ is a constant $n\times n$ matrix and 
\begin{equation}\label{deltan-1}
    \delta_{n-1}\left(A\right)_{ij}=\left\{
          \begin{array}{lr}
             a_{ij},   & \text{if} \ \ 1\le i, j\le n-1, \ \text{or} \ i=j=n;  \\
           0, & \text{otherwise}.
             \end{array}
\right.
\end{equation}
\end{proposition}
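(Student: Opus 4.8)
The plan is to construct the formal solution by the standard method of undetermined coefficients, treating the exponential factor $\mathrm{e}^{\mathrm{i}E_nz}$ and the ``residue'' factor $z^{-\delta_{n-1}(A)/2\pi\mathrm{i}}$ as an ansatz and solving recursively for the matrices $h_m$. First I would substitute $\hat F = \hat h(z)\, z^{-\delta_{n-1}(A)/2\pi\mathrm{i}}\, \mathrm{e}^{\mathrm{i}E_nz}$ into \eqref{classical original equation}. Since $E_n = E_{nn}$ is the diagonal idempotent and $\delta_{n-1}(A)$ is block-diagonal with respect to the splitting $\mathbb{C}^{n-1}\oplus\mathbb{C}$, the two factors $z^{-\delta_{n-1}(A)/2\pi\mathrm{i}}$ and $\mathrm{e}^{\mathrm{i}E_nz}$ commute, and differentiating them produces exactly the terms $-\frac{1}{2\pi\mathrm{i}}\frac{\delta_{n-1}(A)}{z}$ and $\mathrm{i}E_n$ acting on the right. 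Cancelling the (invertible) factor $z^{-\delta_{n-1}(A)/2\pi\mathrm{i}}\mathrm{e}^{\mathrm{i}E_nz}$ on the right, I obtain an equation purely for the formal power series $\hat h(z)=\mathrm{Id}+\sum_{m\ge 1}h_mz^{-m}$:
\begin{equation*}
\frac{\mathrm{d}\hat h}{\mathrm{d}z} + \hat h\cdot\left(\mathrm{i}E_n - \frac{1}{2\pi\mathrm{i}}\frac{\delta_{n-1}(A)}{z}\right) = \left(\mathrm{i}E_n - \frac{1}{2\pi\mathrm{i}}\frac{A}{z}\right)\hat h.
\end{equation*}

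Next I would extract the coefficient of $z^{-m}$ for each $m\ge 0$. The $z^{0}$-term gives $[\,\mathrm{i}E_n,\mathrm{Id}\,]=0$, which is automatically satisfied. For $m\ge 1$, collecting powers yields a recursion of the schematic form
\begin{equation*}
[\,h_{m+1},\mathrm{i}E_n\,] = \left(m - \frac{A}{2\pi\mathrm{i}}\right)h_m + h_m\cdot\frac{\delta_{n-1}(A)}{2\pi\mathrm{i}},
\end{equation*}
which is precisely the relation quoted in the introduction. The key observation is that the operator $X\mapsto [X,\mathrm{i}E_n]$ on $\mathrm{End}(\mathbb{C}^n)$ kills the block-diagonal part (the $(n-1)\times(n-1)$ block and the $(n,n)$ entry) and is invertible on the complementary off-diagonal part (the last column and last row, excluding the corner), where it acts by multiplication by $\pm\mathrm{i}$. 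Therefore at each step the block-diagonal part of the right-hand side must be checked to vanish — this is where the specific choice of $\delta_{n-1}(A)$ matters — and then $h_{m+1}$ is uniquely determined up to addition of an arbitrary block-diagonal matrix. The normalization that $\hat h$ has the form $\mathrm{Id}+h_1z^{-1}+\cdots$ with no further constraint is resolved by the nonresonance condition \eqref{nonres}: requiring $\hat F$ to have the prescribed exponential type forces the block-diagonal ambiguity to be fixed, so uniqueness holds.

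The main obstacle is verifying the compatibility condition at each recursion step, namely that the block-diagonal part of $\bigl(m-\frac{A}{2\pi\mathrm{i}}\bigr)h_m + h_m\cdot\frac{\delta_{n-1}(A)}{2\pi\mathrm{i}}$ vanishes so that the equation $[h_{m+1},\mathrm{i}E_n]=(\text{RHS})$ is solvable. I would handle this by an induction that simultaneously tracks the block structure of $h_m$: writing $h_m$ in $2\times 2$ block form adapted to $\mathbb{C}^{n-1}\oplus\mathbb{C}$, one shows the diagonal blocks of $h_m$ for $m\ge 1$ are determined by the off-diagonal blocks of $h_{m-1}$ through the part of the recursion that does lie in the kernel of $\mathrm{ad}_{\mathrm{i}E_n}$, and the off-diagonal blocks of $h_m$ are determined by $h_{m-1}$ through the invertible part. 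The nonresonance assumption enters to guarantee that this determination is consistent and that no division by zero occurs (it is exactly the statement that the relevant ``characteristic'' quantities $m + \frac{1}{2\pi\mathrm{i}}(\lambda_i^{(n)}-\lambda_j^{(n)})$ are nonzero). Since this is the standard Birkhoff/Turrittin normal-form construction for a rank-one irregular singularity with a nonresonant regular part, the argument is routine once the bookkeeping is set up; I would present only the reduction to the recursion and the inductive solvability check, referring to \cite{Balser} for the general formalism.
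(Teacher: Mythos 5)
Your overall scheme is the right one, and it is essentially how the paper proves the quantum analog (Proposition \ref{uniformal}): substitute the ansatz, obtain the recursion $[h_{m+1},\mathrm{i}E_n]=\big(m-\tfrac{A}{2\pi\mathrm{i}}\big)h_m+h_m\tfrac{\delta_{n-1}(A)}{2\pi\mathrm{i}}$, and split it according to the kernel and image of $\mathrm{ad}_{\mathrm{i}E_n}$. However, the way you close existence and uniqueness has a genuine gap. The kernel (block-diagonal) component of the order-$z^{-m-1}$ equation is not a compatibility condition that "must be checked to vanish", and the block-diagonal ambiguity in $h_{m+1}$ is certainly not "fixed by the prescribed exponential type": adding a block-diagonal matrix to any $h_{m+1}$ does not change the exponential type at all. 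What actually happens (and what the paper does in the quantum case) is that the image component at order $m$ determines the off-diagonal blocks of $h_{m+1}$ from $h_m$, while the kernel component at order $m$, after separating block-diagonal and off-diagonal parts, reads
\[
\Big(m\,\mathrm{Id}-\tfrac{1}{2\pi\mathrm{i}}\,\mathrm{ad}_{\delta_{n-1}(A)}\Big)\,\delta_{n-1}(h_m)\;=\;\tfrac{1}{2\pi\mathrm{i}}\big(A-\delta_{n-1}(A)\big)\big(h_m-\delta_{n-1}(h_m)\big),
\]
and this is precisely the equation that determines the block-diagonal blocks of $h_m$ from its (already known) off-diagonal blocks; note the index: $\delta_{n-1}(h_m)$ is determined by the off-diagonal part of $h_m$, not of $h_{m-1}$, since the kernel-part equation at order $m-1$ involves $h_{m-1}$ only. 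The only place where a cancellation is genuinely automatic is $m=0$, where it holds because $\delta_{n-1}\big(A-\delta_{n-1}(A)\big)=0$; that is the one point where the choice of exponent $\delta_{n-1}(A)$ enters.

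The second part of the gap is that you invoke the wrong spectral condition. Unique solvability of the displayed equation requires $m\,\mathrm{Id}-\tfrac{1}{2\pi\mathrm{i}}\mathrm{ad}_{\delta_{n-1}(A)}$ to be invertible on block-diagonal matrices, i.e.\ that $\tfrac{1}{2\pi\mathrm{i}}\big(\lambda^{(n-1)}_i(A)-\lambda^{(n-1)}_j(A)\big)$ is never a nonzero integer: the relevant quantities are differences of eigenvalues of the upper-left block $A^{(n-1)}$ (the corner contributes only the trivial condition $m\neq 0$), not the quantities $m+\tfrac{1}{2\pi\mathrm{i}}\big(\lambda^{(n)}_i-\lambda^{(n)}_j\big)$ you cite. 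The assumption \eqref{nonres} is the nonresonance of the residue at the regular singular point $z=0$ (it is what makes the Kummer-type solution of Proposition \ref{classical solution} well defined) and plays no role in the recursion for the formal solution at $z=\infty$, so appealing to it cannot resolve the resonant case. In the quantum analog the corresponding operator is $\tfrac{2\pi\mathrm{i}(m+1)}{h}\mathrm{Id}+\mathrm{ad}_{\delta_{n-1}(T)}$, which is invertible simply because $h$ is real, and nothing resembling \eqref{nonres} appears there; in the classical setting the needed condition on $A^{(n-1)}$ is a separate (generic) hypothesis, handled in the paper by working generically and extending afterwards. As written, your argument neither states the correct solvability condition nor gives a valid uniqueness argument for the block-diagonal parts.
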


The radius of convergence of the formal power series $\sum_{m}h_mz^{-m}$ is in general zero. However, it follows from the general principal of differential equations with irregular singularities that (see e.g., \cite{Balser0,LR, MR}) the Borel resummation (Borel-Laplace transform) of $\sum_{m}h_mz^{-m}$ gives a holomorphic function in each Stokes sector (see Definition \ref{defSect}) around $z=\infty$. In this way, one gets an actual solution of \eqref{classical original equation} on each Stokes sector, see Proposition \ref{classical resummation}. These sectors are determined by the irregular term $\mathrm{i} E_n$ of the differential equation as follows.

Let $\widetilde{\mathbb{C}^*}$ be the Riemann surface of the natural logarithm. A point on $\widetilde{\mathbb{C}^*}$ is uniquely characterized by a pair $\left(r,\theta\right)$. Here $r>0$, $\theta\in \mathbb{R}$. Every point on $\widetilde{\mathbb{C}^*}$, or simply every pair $\left(r,\theta\right)$, corresponds to a complex number $z = r \mathrm{e}^{\mathrm{i}\theta}$, called the projection of $\left(r,\theta\right)$. We shall use the same symbol $z$ both for the point on $\widetilde{\mathbb{C}^*}$ and its projection in the punctured plane. We call $\theta$ the argument of the point $z$ on $\widetilde{\mathbb{C}^*}$, i.e., $\theta=\mathrm{arg}\left(z\right)$, and emphasize that on $\widetilde{\mathbb{C}^*}$ the argument of a point is uniquely defined. 

\begin{definition}
For every integer $i$, set $d_i=i\pi-\frac{\pi}{2}$. The anti-Stokes ray of \eqref{classical original equation} corresponding to $d_i$, is the set of all points on $\widetilde{\mathbb{C}^*}$ with argument $d_i$. The anti-Stokes ray is also denoted as $d_i$ by abuse of notation.
\end{definition}

\begin{definition}\label{defSect}
Let $S\left(a,b\right)=\{z\in \widetilde{\mathbb{C}^*}:\operatorname{arg}\left(z\right)\in \left(a,b\right)\}$, where $a$ and $b$ are real numbers. For every integer $i$, the Stokes sector of \eqref{classical original equation} is
    \begin{align*}
        \operatorname{Sect}_i=S\left(d_{i}-\frac{\pi}{2},d_{i+1}+\frac{\pi}{2}\right).
    \end{align*}
\end{definition}
Then the Borel resummation of the formal solution $\hat{F}\left(z;E_n,A\right)$ along the directions in $S\left(d_i, d_{i+1}\right)$ produces an actual solution $F_i\left(z;E_n,A\right)$ whose asymptotic expansion in $\operatorname{Sect}_i$ is $\hat{F}\left(z;E_n,A\right)$. That is
\begin{proposition}\label{classical resummation}
For $i\in \mathbb{Z}$, there exists a unique analytic solution $F_i\left(z;E_n,A\right)$ with the asymptotic expansion
\begin{equation*}
    F_i\left(z;E_n,A\right)\cdot z^{\frac{\delta_{n-1}\left(A\right)}{2\pi \mathrm{i} }}\mathrm{e}^{-\mathrm{i} E_n z}\sim \hat{h}\left(z\right),\quad z \rightarrow \infty \text{ in $\operatorname{Sect}_i$}.
\end{equation*}
\end{proposition}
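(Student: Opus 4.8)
The plan is to invoke the general resummation theory for linear meromorphic ODEs with an irregular singularity of Poincaré rank one at $z=\infty$, specialized to the present system \eqref{classical original equation}. First I would recall the multisummability (here, simple $1$-summability) statement: since the leading term $\mathrm{i}E_n z$ has distinct diagonal entries only after passing to the relevant block decomposition, the exponential factors $\mathrm{e}^{\mathrm{i}z}$ and $\mathrm{e}^{0}$ occurring in $\mathrm{e}^{\mathrm{i}E_n z}$ differ by $\mathrm{e}^{\mathrm{i}z}$, whose "direction of maximal decay / growth" singles out the anti-Stokes rays $d_i = i\pi - \tfrac{\pi}{2}$ already introduced. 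The formal series $\hat h(z) = \mathrm{Id} + \sum_{m\ge 1} h_m z^{-m}$ is a formal solution of a linear system whose only singularity in the Borel plane lies at the points dictated by the differences of the $\mathrm{i}E_n$-eigenvalues, i.e. at $\pm 1$ (up to the $2\pi\mathrm{i}$ normalization), so the Borel transform $\mathcal{B}\hat h(\zeta)$ converges near $\zeta=0$ and extends analytically along every ray except the finitely many bad ones. Hence for each direction $\vartheta \in S(d_i,d_{i+1})$ the Laplace integral $\mathcal{L}_\vartheta \mathcal{B}\hat h$ converges and defines a holomorphic function $h^{(i)}(z)$ on a half-plane bisected by $\vartheta$; varying $\vartheta$ over the full open interval $S(d_i,d_{i+1})$ and taking the union of these half-planes yields a holomorphic function on the sector $\operatorname{Sect}_i = S(d_i - \tfrac{\pi}{2}, d_{i+1}+\tfrac{\pi}{2})$ that is asymptotic to $\hat h(z)$ there.

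Next I would check that $F_i(z;E_n,A) := h^{(i)}(z)\, z^{-\delta_{n-1}(A)/(2\pi\mathrm{i})} \mathrm{e}^{\mathrm{i}E_n z}$ is a genuine solution of \eqref{classical original equation}. This is the standard argument that the Borel–Laplace transform intertwines the formal and actual solution spaces: the convolution equation satisfied by $\mathcal{B}\hat h$ in the Borel plane is the Borel transform of the ODE satisfied (formally) by $\hat h$, and applying $\mathcal{L}_\vartheta$ transports it back to the honest ODE; since $z^{-\delta_{n-1}(A)/(2\pi\mathrm{i})}\mathrm{e}^{\mathrm{i}E_n z}$ is by construction a solution of the diagonal/triangular "model" equation with coefficient $\mathrm{i}E_n - \tfrac{1}{2\pi\mathrm{i}}\tfrac{\delta_{n-1}(A)}{z}$, the product $h^{(i)} \cdot z^{-\delta_{n-1}(A)/(2\pi\mathrm{i})}\mathrm{e}^{\mathrm{i}E_n z}$ solves \eqref{classical original equation} on $\operatorname{Sect}_i$.

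For uniqueness, suppose $G(z)$ is another solution with the same asymptotics $G(z)\, z^{\delta_{n-1}(A)/(2\pi\mathrm{i})}\mathrm{e}^{-\mathrm{i}E_n z} \sim \hat h(z)$ on $\operatorname{Sect}_i$. Then $D(z) := \big(G(z) - F_i(z;E_n,A)\big) z^{\delta_{n-1}(A)/(2\pi\mathrm{i})}\mathrm{e}^{-\mathrm{i}E_n z}$ is asymptotic to $0$ on $\operatorname{Sect}_i$, i.e. it is flat (every entry is $O(z^{-N})$ for all $N$), and it satisfies the gauge-transformed linear system. A solution of that system has entries of the shape (polynomial in $\log z$)$\cdot z^{(\text{exponent})}\cdot \mathrm{e}^{(0 \text{ or } \mathrm{i})z}$; on a sector of opening exceeding $\pi$ — which $\operatorname{Sect}_i$ is, its width being $(d_{i+1}+\tfrac{\pi}{2}) - (d_i - \tfrac{\pi}{2}) = 2\pi$ — the only such combination that is flat is identically zero (a Phragmén–Lindelöf / "a flat solution on a wide enough sector vanishes" argument: the entry coupling to the $\mathrm{e}^{\mathrm{i}z}$ channel cannot be flat over an arc of length $>\pi$ unless it is zero, and likewise for the trivial channel). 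Hence $D\equiv 0$ and $G = F_i(z;E_n,A)$. The one genuinely delicate point, and the place I would be most careful, is the location of the Borel-plane singularities and the verification that they do not obstruct Laplace integration in any direction of $S(d_i,d_{i+1})$: concretely one must confirm that the singular directions of $\mathcal{B}\hat h$ are exactly the anti-Stokes rays $d_i$ (equivalently, that the "non-resonance" hypothesis \eqref{nonres} rules out extra singularities from the $z^{-\delta_{n-1}(A)/(2\pi\mathrm{i})}$ factor), so that each $\operatorname{Sect}_i$ genuinely contains no anti-Stokes ray in its defining interval of Laplace directions; everything else is the standard package and can be quoted from \cite{Balser0, LR, MR}.
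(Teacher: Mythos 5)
Your proposal is correct and takes essentially the same route as the paper, which gives no proof of this proposition at all and simply quotes the standard Borel--Laplace resummation package for rank-one irregular singularities (see \cite{Balser0,LR,MR}) that you sketch: summation along the directions in $S(d_i,d_{i+1})$, gluing of the half-planes to get asymptotics on the $2\pi$-wide sector $\operatorname{Sect}_i$, and uniqueness because a flat combination of solutions (whose entries behave like $z^{\alpha}$ or $z^{\alpha}\mathrm{e}^{\mathrm{i}z}$) on a sector of opening greater than $\pi$ must vanish. The one harmless slip is your parenthetical on the Borel-plane singularities: they sit at the differences of the eigenvalues of the leading term $\mathrm{i}E_n$, i.e.\ at $\pm\mathrm{i}$ rather than $\pm 1$, which is precisely why the singular directions are $\arg\zeta=\pm\frac{\pi}{2}$, matching the rays $d_i$; and the nonresonance condition \eqref{nonres} is not what keeps the Laplace directions free of singularities (it concerns the regular singularity at $z=0$ and the formal solution), so no extra verification beyond the standard theory is needed there.
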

In general, the solutions $F_i\left(z;E_n,A\right)$ corresponding to different sectors $\operatorname{Sect}_i$ are different.  

\begin{definition}\label{classical Stokes matrix}
    The Stokes matrices $S_{ \pm}\left(E_n, A\right)$ of the system \eqref{classical original equation} are the unique matrices such that
    \begin{itemize}
        \item on the sector $z\in S\left(d_0-\frac{\pi}{2},d_0+\frac{\pi}{2}\right)$, we have $F_{-1}\left(z;E_n,A\right)\cdot \mathrm{e}^{-\frac{\delta_{n-1}\left(A\right)}{2}} S_{+}\left(E_n, A\right) =F_{0}\left(z;E_n,A\right)$, and
        \item on the sector $z\in S\left(d_{1}-\frac{\pi}{2},d_{1}+\frac{\pi}{2}\right)$, we have $F_{0}\left(z;E_n,A\right)\cdot S_{-}\left(E_n, A\right)\mathrm{e}^{\frac{\delta_{n-1}\left(A\right)}{2}}=F_{-1}\left(z\mathrm{e}^{-2 \pi \mathrm{i} };E_n,A\right)$.
    \end{itemize}
\end{definition}

\begin{rmk}
We can define the Stokes matrix $S_i\left(E_n,A\right)$ for any integer $i$ by
\[F_{i-1}\left(z;E_n,A\right)\cdot \mathrm{e}^{-\frac{\delta_{n-1}\left(A\right)}{2}} S_{i}\left(E_n, A\right)  =F_i\left(z;E_n,A\right) \text{ on } S\left(d_i-\frac{\pi}{2},d_i+\frac{\pi}{2}\right).\]
However, one checks that 
\[S_i\left(E_n,A\right)=\mathrm {e}^{-\delta_{n-1}\left(A\right)}S_{i+2}\left(E_n,A\right)\mathrm{e}^{\delta_{n-1}\left(A\right)}.\]
Therefore, it is enough to study the pair of Stokes matrices $S_+\left(E_n,A\right)=S_0\left(E_n,A\right)$, and 
\begin{equation*}
    S_-\left(E_n,A\right)= \mathrm{e}^{-\frac{\delta_{n-1}\left(A\right)}{2}}S_{1}\left(E_n,A\right) \mathrm{e}^{\frac{\delta_{n-1}\left(A\right)}{2}}.
\end{equation*}


\end{rmk}

\subsection{Diagonalization of the upper left submatrix of $A$}\label{diagonalization}
We first compute the Stokes matrices under the conditions 
\begin{align}\label{ineq1}
\lambda^{\left(n-1\right)}_i\left(A\right) & \ne \lambda^{\left(n-1\right)}_j\left(A\right), \text{ for } \ 1\leq i\neq j \leq n-1,\\ \label{ineq2}
\lambda^{\left(n-1\right)}_i\left(A\right) & \ne \lambda^{\left(n-2\right)}_j\left(A\right), \text{ for } \ 1\leq i \leq n-1, \ \ \ 1\leq j \leq n-2 .
\end{align}
Along the way, we see that by analytic continuation, the result can be generalized to contain all nonresonant $A$. 

Let us take two $\left(n-1\right)\times \left(n-1\right)$ matrices
\begin{align*}
\mathcal{P}_{n-1}=\left(\begin{array}{ccc}
\left(-1\right)^{1+1} \Delta_{\hat{1} ,..., n-1}^{1 ,..., n-2}\left(A-\lambda^{\left(n-1\right)}_1\cdot \mathrm{Id}\right) & \cdots & \left(-1\right)^{1+n-1} \Delta_{\hat{1} ,..., n-1}^{1 ,..., n-2}\left(A-\lambda^{\left(n-1\right)}_{n-1}\cdot \mathrm{Id}\right) \\
\vdots &  \ddots & \vdots \\
\left(-1\right)^{n-1+1} \Delta_{1 ,..., \widehat{n-1}}^{1 ,..., n-2}\left(A-\lambda^{\left(n-1\right)}_1\cdot \mathrm{Id}\right) & \cdots & \left(-1\right)^{n-1+n-1} \Delta_{1 ,..., \widehat{n-1}}^{1 ,..., n-2}\left(A-\lambda^{\left(n-1\right)}_{n-1}\cdot \mathrm{Id}\right)
\end{array}\right),
    \end{align*}    
and
\begin{align*}
 \mathcal{Q}_{n-1}=\left(\begin{array}{ccc}
\left(-1\right)^{1+1} \Delta_{1 ,..., n-2}^{\hat{1} ,..., n-1}\left(A-\lambda^{\left(n-1\right)}_1\cdot \mathrm{Id}\right) & \cdots & \left(-1\right)^{1+n-1} \Delta_{1 ,..., n-2}^{1 ,..., \widehat{n-1} }\left(A-\lambda^{\left(n-1\right)}_1\cdot \mathrm{Id}\right) \\
\vdots & \ddots & \vdots \\
\left(-1\right)^{n-1+1} \Delta_{1 ,..., n-2}^{\hat{1} ,..., n-1 }\left(A-\lambda^{\left(n-1\right)}_{n-1}\cdot \mathrm{Id}\right) & \cdots & \left(-1\right)^{n-1+n-1} \Delta_{1 ,..., n-2}^{1 ,..., \widehat{n-1}}\left(A-\lambda^{\left(n-1\right)}_{n-1}\cdot \mathrm{Id}\right)
\end{array}\right).
\end{align*}
Here $\Delta_{1,...,\hat{j},..., n-1}^{1,...,n-2}\left(A-\lambda^{\left(n-1\right)}_i\cdot \mathrm{Id}\right)$ and $\Delta^{1,...,\hat{j},..., n-1}_{1,...,n-2}\left(A-\lambda^{\left(n-1\right)}_i\cdot \mathrm{Id}\right)$ are the $\left(n-2\right)\times \left(n-2\right)$ minor of the matrix $A-\lambda^{\left(n-1\right)}_i\cdot \mathrm{Id}$ (the $\hat{j}$ means that the column or row index $j$ is omitted).
\begin{proposition}\label{PQ}
We have
\begin{align}\label{PAAP}
A^{\left(n-1\right)} \mathcal{P}_{n-1}=\mathcal{P}_{n-1} A^{\left(n-1\right)}_{n-1}, \ \text{ and } \ \mathcal{Q}_{n-1} A^{\left(n-1\right)}=A^{\left(n-1\right)}_{n-1}\mathcal{Q}_{n-1} 
   ,
   \end{align}
   where $A^{\left(n-1\right)}$ is the upper left $\left(n-1\right)\times \left(n-1\right)$ submatrix of $A$, and $A^{\left(n-1\right)}_{n-1}=\mathrm{diag}\left(\lambda^{\left(n-1\right)}_1,\ldots,\lambda^{\left(n-1\right)}_{n-1}\right).$ 
Furthermore, we have that 
\begin{equation}\label{PQD}
\mathcal{Q}_{n-1}\mathcal{P}_{n-1}=D_{n-1},
\end{equation}
where $D_{n-1}={\rm diag}\left(\left(D_{n-1}\right)_1,....,\left(D_{n-1}\right)_{n-1}\right)$ is a diagonal matrix with the diagonal elements 
\begin{equation*}
\left(D_{n-1}\right)_i=
\prod_{l=1,l\ne i}^{n-1}\left(\lambda^{\left(n-1\right)}_l-\lambda^{\left(n-1\right)}_i\right)\prod_{l=1}^{n-2}\left(\lambda^{\left(n-2\right)}_l-\lambda^{\left(n-1\right)}_i\right), \text{ for } i=1,...,n-1.
\end{equation*}
\end{proposition}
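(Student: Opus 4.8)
The plan is to recognize the columns of $\mathcal{P}_{n-1}$ and the rows of $\mathcal{Q}_{n-1}$ as (co)eigenvectors of $A^{(n-1)}$ built out of adjugate-type cofactors, and then to verify the two intertwining relations \eqref{PAAP} columnwise (resp.\ rowwise), and the product formula \eqref{PQD} entrywise. First I would fix an index $i$ and consider the $(n-1)\times(n-1)$ matrix $B_i := A^{(n-1)} - \lambda^{(n-1)}_i \cdot \mathrm{Id}$, which is singular by definition of $\lambda^{(n-1)}_i$. The $i$-th column of $\mathcal{P}_{n-1}$ is, up to signs, the vector of $(n-2)\times(n-2)$ minors of $B_i$ obtained by deleting the last row and one of the rows $1,\dots,n-1$; this is precisely (a column of) the classical adjugate $\mathrm{adj}(B_i)$ restricted to an appropriate index. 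Since $B_i\cdot \mathrm{adj}(B_i) = \det(B_i)\cdot \mathrm{Id} = 0$, each such column lies in $\ker B_i$, i.e.\ it is an eigenvector of $A^{(n-1)}$ with eigenvalue $\lambda^{(n-1)}_i$. Assembling over $i$ and using the separation hypothesis \eqref{ineq1} gives $A^{(n-1)}\mathcal{P}_{n-1} = \mathcal{P}_{n-1}\,\mathrm{diag}(\lambda^{(n-1)}_1,\dots,\lambda^{(n-1)}_{n-1})$; the statement for $\mathcal{Q}_{n-1}$ is the transposed computation, using that rows of $\mathrm{adj}(B_i)$ lie in the left kernel of $B_i$.

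For \eqref{PQD}, I would compute the $(i,j)$-entry of $\mathcal{Q}_{n-1}\mathcal{P}_{n-1}$ as a sum over $k$ of a row-cofactor of $B_i$ times a column-cofactor of $B_j$. When $i=j$ this sum is a Laplace/cofactor expansion and should collapse to a product of the form $\prod_{l\ne i}(\lambda^{(n-1)}_l - \lambda^{(n-1)}_i)\prod_{l}(\lambda^{(n-2)}_l - \lambda^{(n-1)}_i)$: the first factor is (up to sign) the derivative of the characteristic polynomial of $A^{(n-1)}$ at $\lambda^{(n-1)}_i$, and the factor involving $\lambda^{(n-2)}$ comes from the characteristic polynomial of the $(n-2)\times(n-2)$ block $A^{(n-2)}$ evaluated at $\lambda^{(n-1)}_i$, because the cofactors in question only see the deletion of the last row/column. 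Concretely I would identify $\sum_k (\text{row }k\text{ cofactor})(\text{col }k\text{ cofactor})$ with a $(1,1)$-type minor of $\mathrm{adj}$ of the bordered matrix and invoke the Jacobi/Sylvester identity for minors of the adjugate. The vanishing of the off-diagonal entries $i\ne j$ is forced by \eqref{PAAP}: from $A^{(n-1)}\mathcal{P}_{n-1} = \mathcal{P}_{n-1}\Lambda$ and $\mathcal{Q}_{n-1}A^{(n-1)} = \Lambda \mathcal{Q}_{n-1}$ with $\Lambda$ diagonal, the matrix $M := \mathcal{Q}_{n-1}\mathcal{P}_{n-1}$ commutes with $\Lambda$, and since $\Lambda$ has distinct eigenvalues by \eqref{ineq1}, $M$ is diagonal; so only the diagonal entries need to be evaluated.

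The main obstacle I anticipate is the bookkeeping in the diagonal entries of \eqref{PQD}: matching the sign conventions $(-1)^{k+1}$ appearing in the definitions of $\mathcal{P}_{n-1}$ and $\mathcal{Q}_{n-1}$ with the signs in the cofactor/Laplace expansions, and then cleanly extracting the two product factors. I would handle the sign issue once and for all by writing everything in terms of genuine adjugates $\mathrm{adj}(B_i)$ and of the characteristic polynomials $p^{(n-1)}(t) = \det(t\,\mathrm{Id} - A^{(n-1)})$ and $p^{(n-2)}(t) = \det(t\,\mathrm{Id} - A^{(n-2)})$, noting that $\prod_{l\ne i}(\lambda^{(n-1)}_l - \lambda^{(n-1)}_i) = (-1)^{n-2}\,\frac{d}{dt}p^{(n-1)}(t)\big|_{t=\lambda^{(n-1)}_i}$ and $\prod_l(\lambda^{(n-2)}_l - \lambda^{(n-1)}_i) = (-1)^{n-2} p^{(n-2)}(\lambda^{(n-1)}_i)$. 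Then the diagonal entry is read off from the $(n-1,n-1)$-cofactor of $B_i$ (which is $p^{(n-2)}(\lambda^{(n-1)}_i)$ up to sign) together with the fact that the trace of $\mathrm{adj}(B_i)$ restricted appropriately gives $(p^{(n-1)})'$. Once the single-index identity is verified, the rest follows formally. Finally, the hypotheses \eqref{ineq1}–\eqref{ineq2} guarantee respectively that $\Lambda$ has simple spectrum (so $\mathcal{P}_{n-1},\mathcal{Q}_{n-1}$ are invertible and $M$ is diagonal) and that $(D_{n-1})_i \ne 0$; the extension to all nonresonant $A$ is then by analytic continuation as indicated in the text.
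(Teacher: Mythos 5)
Your proposal is correct and follows essentially the same route as the paper: \eqref{PAAP} via the adjugate/Laplace identity $B_i\,\mathrm{adj}(B_i)=\det(B_i)\,\mathrm{Id}=0$ (columns of $\mathcal{P}_{n-1}$, rows of $\mathcal{Q}_{n-1}$ as right/left null vectors), the off-diagonal vanishing of $\mathcal{Q}_{n-1}\mathcal{P}_{n-1}$ from its commutation with the distinct-eigenvalue diagonal matrix, the diagonal entries identified with the $(n-1,n-1)$-entry of $\mathrm{adj}(B_i)^2$ and evaluated through $\chi_{n-1}'(\lambda^{(n-1)}_i)$ and $\det\big(A^{(n-2)}-\lambda^{(n-1)}_i\,\mathrm{Id}_{n-2}\big)$, and analytic continuation at the end. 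The only minor deviation is that you justify the collapse $\mathrm{adj}(B_i)^2=\mathrm{tr}\big(\mathrm{adj}(B_i)\big)\,\mathrm{adj}(B_i)$ by the rank-one/Jacobi property of the adjugate of a singular matrix, whereas the paper's Lemma \ref{1} obtains the same identity by conjugating with $\mathcal{P}_{n-1}$; both are valid (and your wording ``deleting the last row and one of the rows'' should read ``one of the columns,'' a harmless slip).
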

First, the identity \eqref{PAAP} follows from the Laplace expansion. The identity \eqref{PQD} follows from the following lemma.

\begin{lem}\label{1}
 Let $\chi_{n-1}\left(x\right)$ be the polynomial $\mathrm{det}\left(A^{\left(n-1\right)}-x\cdot \mathrm{Id}_{n-1}\right)$, with ${\rm Id}_{n-1}$ the rank $n-1$ identity matrix, and $\chi_{n-1}^{\prime}\left(x\right)$ be the derivative of $\chi_{n-1}\left(x\right)$ with respect to $x$. For $1\leq k \leq n-1$, we have
\begin{align}\label{chapoly}
\left( A^{\left(n-1\right)}-\lambda_{k}^{\left(n-1\right)}\cdot \mathrm{Id}_{n-1}\right)^{* 2}=-\chi_{n-1}^{\prime}\left( \lambda_{k}^{\left(n-1\right)} \right)\left(A^{\left(n-1\right)}-\lambda_{k}^{\left(n-1\right)}\cdot \mathrm{Id}_{n-1}  \right)^{*},
\end{align}
where $\left(A^{\left(n-1\right)}-\lambda_{k}^{\left(n-1\right)}\cdot \mathrm{Id}_{n-1}  \right)^{*}$ is the adjoint matrix of $A^{\left(n-1\right)}-\lambda_{k}^{\left(n-1\right)} \cdot {\rm Id}_{n-1} $, and $\left(A^{\left(n-1\right)}-\lambda_{k}^{\left(n-1\right)}\cdot \mathrm{Id}_{n-1} \right)^{* 2}$ is the square of $\left(A^{\left(n-1\right)}-\lambda_{k}^{\left(n-1\right)}\cdot \mathrm{Id}_{n-1} \right)^{*}$.
\end{lem}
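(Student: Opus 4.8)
The plan is to prove the identity \eqref{chapoly} by working over the algebraic closure and using the structure theory of the adjugate matrix near a simple eigenvalue. First I would reduce to the case where $\lambda_k^{(n-1)}$ is a simple eigenvalue of $A^{(n-1)}$: under the running assumptions \eqref{ineq1} all eigenvalues $\lambda_1^{(n-1)},\dots,\lambda_{n-1}^{(n-1)}$ are distinct, so $B:=A^{(n-1)}-\lambda_k^{(n-1)}\cdot\mathrm{Id}_{n-1}$ has a one-dimensional kernel and rank exactly $n-2$. The standard fact about the adjugate is that $\mathrm{rank}(B^*)=1$ in this situation, so $B^*=v\,w^{\intercal}$ for suitable nonzero column vectors $v,w$; moreover $B B^* = B^* B = \det(B)\cdot\mathrm{Id}_{n-1}=0$, which forces $Bv=0$ and $w^{\intercal}B=0$, i.e.\ $v$ spans $\ker B$ and $w$ spans $\ker B^{\intercal}$.

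Next I would compute the scalar $w^{\intercal}v$, since $B^{*2}=(v w^{\intercal})(v w^{\intercal})=(w^{\intercal}v)\,v w^{\intercal}=(w^{\intercal}v)\,B^*$, and the claim \eqref{chapoly} is exactly the assertion $w^{\intercal}v=-\chi_{n-1}'(\lambda_k^{(n-1)})$. To pin down $w^{\intercal}v$ I would use the trace: $\mathrm{tr}(B^*)=w^{\intercal}v$ because $B^*$ has rank one with this single nonzero ``eigenvalue.'' On the other hand, a classical formula expresses $\mathrm{tr}(B^*)$ as (up to sign) a coefficient of the characteristic polynomial of $B$: writing $\det(B - x\cdot\mathrm{Id}_{n-1}) = \sum_{j} c_j x^{j}$, one has $\mathrm{tr}(B^*)$ equal to $-c_1$ (the coefficient of $x$, with the appropriate sign convention), since the entries of $B^*$ are the $(n-2)\times(n-2)$ principal-ish cofactors and their diagonal sum is the sum of the $(n-2)\times(n-2)$ principal minors of $B$. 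But $\det(B-x\cdot\mathrm{Id}_{n-1}) = \det(A^{(n-1)} - (\lambda_k^{(n-1)}+x)\cdot\mathrm{Id}_{n-1}) = \chi_{n-1}(\lambda_k^{(n-1)}+x)$, so its Taylor expansion in $x$ at $x=0$ has linear coefficient $\chi_{n-1}'(\lambda_k^{(n-1)})$. Tracking the sign bookkeeping through the definition $\chi_{n-1}(x)=\det(A^{(n-1)}-x\cdot\mathrm{Id}_{n-1})$ then yields $w^{\intercal}v = -\chi_{n-1}'(\lambda_k^{(n-1)})$, and substituting into $B^{*2}=(w^{\intercal}v)B^*$ gives \eqref{chapoly}.

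Alternatively, and perhaps more cleanly, I would diagonalize: pick $g\in\mathrm{GL}(n-1,\mathbb{C})$ with $g^{-1}A^{(n-1)}g=\mathrm{diag}(\lambda_1^{(n-1)},\dots,\lambda_{n-1}^{(n-1)})$, so $g^{-1}Bg = \mathrm{diag}(\lambda_1^{(n-1)}-\lambda_k^{(n-1)},\dots,0,\dots,\lambda_{n-1}^{(n-1)}-\lambda_k^{(n-1)})$ with the $0$ in slot $k$. Since the adjugate satisfies $(g^{-1}Bg)^* = g^{\intercal}B^*(g^{-1})^{\intercal}$ up to the scalar $\det g$ (more precisely $(PBQ)^*=\det(PQ)\,Q^{-1}B^*P^{-1}$ for invertible $P,Q$), and for a diagonal matrix $\mathrm{diag}(\mu_1,\dots,\mu_{n-1})$ with only $\mu_k=0$ the adjugate is the diagonal matrix with $k$-th entry $\prod_{l\neq k}\mu_l$ and all other entries $0$, the identity $B^{*2}=(w^{\intercal}v)B^*$ becomes the obvious scalar identity $\big(\prod_{l\neq k}(\lambda_l^{(n-1)}-\lambda_k^{(n-1)})\big)^2 = \big(\prod_{l\neq k}(\lambda_l^{(n-1)}-\lambda_k^{(n-1)})\big)\cdot\big(\prod_{l\neq k}(\lambda_l^{(n-1)}-\lambda_k^{(n-1)})\big)$, and one reads off that the proportionality constant is $\prod_{l\neq k}(\lambda_l^{(n-1)}-\lambda_k^{(n-1)})$, which is precisely $-\chi_{n-1}'(\lambda_k^{(n-1)})$ by the product formula $\chi_{n-1}(x)=\prod_{l=1}^{n-1}(\lambda_l^{(n-1)}-x)$ differentiated at $x=\lambda_k^{(n-1)}$. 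The main obstacle is purely bookkeeping: getting every sign right in the conjugation law for the adjugate and in the relation between $\chi_{n-1}'$ and the relevant coefficient, together with checking that the special case of a repeated eigenvalue (excluded by \eqref{ineq1}, and handled at the end by analytic continuation) does not secretly enter; there is no deep difficulty once the rank-one structure of $B^*$ is in hand.
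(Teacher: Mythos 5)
Your proposal is correct, and it in fact contains two arguments of different status relative to the paper. Your second (diagonalization) argument is in substance the paper's own proof: the paper conjugates $\left(A^{\left(n-1\right)}-x\cdot\mathrm{Id}_{n-1}\right)^{*}$ by the explicit matrix $\mathcal{P}_{n-1}$ of Section \ref{diagonalization}, uses $\left(A^{\left(n-1\right)}-x\cdot\mathrm{Id}_{n-1}\right)^{*}=\chi_{n-1}\left(x\right)\left(A^{\left(n-1\right)}-x\cdot\mathrm{Id}_{n-1}\right)^{-1}$ to see that in the diagonal frame the adjugate at $x=\lambda^{\left(n-1\right)}_k$ is diagonal with single nonzero entry $\prod_{i\neq k}\left(\lambda^{\left(n-1\right)}_i-\lambda^{\left(n-1\right)}_k\right)=-\chi_{n-1}^{\prime}\left(\lambda^{\left(n-1\right)}_k\right)$ in slot $k$, and reads off \eqref{chapoly}; your version with an abstract diagonalizer $g$ is the same computation (your parenthetical transpose formula for $(g^{-1}Bg)^{*}$ is garbled, but the law $(PBQ)^{*}=\det(PQ)\,Q^{-1}B^{*}P^{-1}$ that you state and actually use is correct and gives $(g^{-1}Bg)^{*}=g^{-1}B^{*}g$). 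Your first argument is genuinely different and slightly more economical: it uses only that $\lambda^{\left(n-1\right)}_k$ is a simple eigenvalue, so $B=A^{\left(n-1\right)}-\lambda^{\left(n-1\right)}_k\cdot\mathrm{Id}_{n-1}$ has rank $n-2$ and $B^{*}=v\,w^{\intercal}$ is rank one with $Bv=0$, $w^{\intercal}B=0$, whence $B^{*2}=\left(w^{\intercal}v\right)B^{*}$ and $w^{\intercal}v=\mathrm{tr}\left(B^{*}\right)$, identified with $-\chi_{n-1}^{\prime}\left(\lambda^{\left(n-1\right)}_k\right)$ via the sum of principal $\left(n-2\right)\times\left(n-2\right)$ minors, i.e.\ the linear coefficient of $\det\left(B-x\cdot\mathrm{Id}_{n-1}\right)=\chi_{n-1}\left(\lambda^{\left(n-1\right)}_k+x\right)$; the sign bookkeeping you sketch does close correctly. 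What this buys is that it never needs condition \eqref{ineq2} nor the invertibility of $\mathcal{P}_{n-1}$, $\mathcal{Q}_{n-1}$, whereas the paper's route has the advantage of being phrased in exactly the objects that the lemma is then used to control in the proof of Proposition \ref{PQ}; your final appeal to analytic continuation for repeated eigenvalues matches the paper's.
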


\begin{proof}
It is enough to show \eqref{chapoly} under the assumptions \eqref{ineq1} and \eqref{ineq2}. Under the assumptions, the matrices $\mathcal{P}_{n-1}$ and $\mathcal{Q}_{n-1}$ are invertible. By identity \eqref{PAAP} and $\left(A^{\left(n-1\right)}-x\cdot \mathrm{Id}_{n-1}\right)^{*}=\chi_{n-1}\left(x\right) \left(A^{\left(n-1\right)}-x\cdot \mathrm{Id}_{n-1}\right)^{-1}$, we have
\begin{align*}
\mathcal{P}_{n-1}^{-1}\cdot \left(A^{\left(n-1\right)}-x\cdot \mathrm{Id}_{n-1}\right)^{*} \cdot \mathcal{P}_{n-1}&=\chi_{n-1}\left(x\right) \cdot \left(A^{\left(n-1\right)}_{n-1}-x\cdot \mathrm{Id}_{n-1}\right)^{-1} \\
&=\operatorname{diag}\left(\frac{\prod_{i=1}^{n}\left(\lambda^{\left(n-1\right)}_i  -x\right)}{\lambda^{\left(n-1\right)}_1-x  }, \cdots,\frac{\prod_{i=1}^{n}\left(\lambda^{\left(n-1\right)}_i -x \right)}{\lambda^{\left(n-1\right)}_{n-1}-x  }\right).
\end{align*}

\noindent Thus
\begin{align*}
\mathcal{P}_{n-1}^{-1}\cdot \left(  A^{\left(n-1\right)}-\lambda^{\left(n-1\right)}_k\cdot \mathrm{Id}_{n-1}\right)^{*}\cdot  \mathcal{P}_{n-1}=\operatorname{diag}\left(0,\cdots,\prod_{i=1,i\neq k}^{n-1}\left( \lambda_{i}^{\left(n-1\right)}-\lambda_{k}^{\left(n-1\right)}   \right),\cdots,0\right).
\end{align*}
Since 
the diagonal matrix 
$    \mathcal{P}_{n-1}^{-1}\left(  A^{\left(n-1\right)}-\lambda^{\left(n-1\right)}_k\cdot \mathrm{Id}_{n-1}\right)^{*} \mathcal{P}_{n-1}+\chi_{n-1}^{\prime}\left(\lambda^{\left(n-1\right)}_k  \right){\cdot \mathrm{Id}_{n-1}}
$
has zero $\left(k,k\right)$-entry, so
\begin{align*}
\mathcal{P}_{n-1}^{-1}\left( A^{\left(n-1\right)}-\lambda^{\left(n-1\right)}_k\cdot \mathrm{Id}_{n-1}\right)^{*} \mathcal{P}_{n-1}\cdot \left(\mathcal{P}_{n-1}^{-1}\left(  A^{\left(n-1\right)}-\lambda^{\left(n-1\right)}_k\cdot \mathrm{Id}_{n-1}\right)^{*} \mathcal{P}_{n-1}+\chi_{n-1}^{\prime}\left(\lambda^{\left(n-1\right)}_k  \right){\cdot\mathrm{Id}_{n-1}}\right)=0.
\end{align*}
That is just $
\left(A^{\left(n-1\right)}-\lambda_{k}^{\left(n-1\right)} \cdot \mathrm{Id}_{n-1}  \right)^{* 2}=-\chi_{n-1}^{\prime}\left(\lambda_{k}^{\left(n-1\right)}  \right)\left(A^{\left(n-1\right)}-\lambda_{k}^{\left(n-1\right)}\cdot \mathrm{Id}_{n-1}   \right)^{*}.$
\end{proof}

\begin{proof}[Proof of Proposition \ref{PQ}]
    Following \eqref{PAAP}, we have
\[A^{\left(n-1\right)}_{n-1} \mathcal{Q}_{n-1} \mathcal{P}_{n-1}=\mathcal{Q}_{n-1}A^{\left(n-1\right)} \mathcal{P}_{n-1}=\mathcal{Q}_{n-1}\mathcal{P}_{n-1} A^{\left(n-1\right)}_{n-1},\]
which implies that $A^{\left(n-1\right)}_{n-1}$ commutes with $\mathcal{Q}_{n-1} \mathcal{P}_{n-1}$. 
Thus if $\lambda^{\left(n-1\right)}_k$, $1\leq k \leq n-1$, are distinct, we know $\mathcal{Q}_{n-1} \mathcal{P}_{n-1}$ is diagonal. Note that the $k$-th diagonal element of $\mathcal{Q}_{n-1} \mathcal{P}_{n-1}$ is just the $\left(n-1,n-1\right)$-entry of the left side of the matrix identity \eqref{chapoly} in Lemma \ref{1},
 it therefore proves the proposition for generic $A^{(n-1)}$. The result still holds (when eigenvalues of $A^{\left(n-1\right)}$ are not distinct) by analytic continuation.
\end{proof}

Therefore, under the conditions \eqref{ineq1} and \eqref{ineq2}, the matrices \[P_{n-1}:=\mathcal{P}_{n-1}\frac{1}{\sqrt{D_{n-1}}} \ \text{  and  } \    Q_{n-1}:=\frac{1}{\sqrt{D_{n-1}}}\mathcal{Q}_{n-1}\] are invertible to each other, so we can diagonalize the upper left $\left(n-1\right)\times \left(n-1\right)$ block to simplify the system \ref{classical original equation}.

In particular, let us use ${P}_{n-1}$ to diagonalize the upper left submatrix of $A$ and introduce
\begin{eqnarray}\label{zhuazi}
    A_{n-1}:=\operatorname{diag}\left({P}_{n-1}^{-1},1\right)\cdot A \cdot \operatorname{diag}\left({P}_{n-1},1\right) = \left(\begin{array}{cccc}
\lambda^{\left(n-1\right)}_1 & & 0 & a^{\left(n-1\right)}_1 \\
 & \ddots & & \vdots \\
0 & & \lambda^{\left(n-1\right)}_{n-1} & a^{\left(n-1\right)}_{n-1} \\
b^{\left(n-1\right)}_1 & \cdots & b^{\left(n-1\right)}_{n-1} & a_{n n} 
\end{array} \right),
\end{eqnarray}
and by definition the functions $a^{\left(n-1\right)}_i\left(A\right)$ and $b^{\left(n-1\right)}_i\left(A\right)$ of $A$ are
\begin{align*}
    a^{\left(n-1\right)}_i\left(A\right)&= \sum_{v=1}^{n-1}\left({P}_{n-1}^{-1}\right)_{i v} \cdot \left(A\right)_{v n}=\frac{(-1)^{i+n-1}\Delta^{1,...,n-1}_{1,...,n-2,n}\left(A-\lambda^{\left(n-1\right)}_i\right)}{\sqrt{\prod_{l=1,l\ne i}^{n-1}\left(\lambda^{\left(n-1\right)}_l-\lambda^{\left(n-1\right)}_i\right)\prod_{l=1}^{n-2}\left(\lambda^{\left(n-2\right)}_l-\lambda^{\left(n-1\right)}_i\right)}},\\ 
    b^{\left(n-1\right)}_i\left(A\right)&= \sum_{v=1}^{n-1} \left(A\right)_{n v}\left({P}_{n-1}\right)_{v i}=\frac{(-1)^{i+n-1}\Delta_{1,...,n-1}^{1,..., n-2,n}\left(A-\lambda_i^{\left(n-1\right)}\right)}{\sqrt{\prod_{l=1,l\ne i}^{n-1}\left(\lambda^{\left(n-1\right)}_l-\lambda^{\left(n-1\right)}_i\right)\prod_{l=1}^{n-2}\left(\lambda^{\left(n-2\right)}_l-\lambda^{\left(n-1\right)}_i\right)}}.
\end{align*}

Taking the Laplace expansion of the determinant $\operatorname{det}\left(x+\frac{1}{2\pi\mathrm{i} }A_{n-1}\right)$ of the matrix $A_{n-1}$ in \eqref{zhuazi} with respect to the last column (or row), we get
\begin{align}\label{x identity}
    \frac{\prod_{i=1}^{n}\left(x+\frac{\lambda^{\left(n\right)}_i}{2\pi\mathrm{i} }\right)}{\prod_{i=1}^{n-1}\left(x+\frac{\lambda^{\left(n-1\right)}_i}{2\pi\mathrm{i} }\right)}=x+\frac{a_{n n}}{2\pi\mathrm{i} }-\sum_{k=1}^{n-1}\frac{b^{\left(n-1\right)}_k}{2\pi\mathrm{i} } \frac{1}{x+\frac{\lambda^{\left(n-1\right)}_k}{2\pi\mathrm{i} }} \frac{a^{\left(n-1\right)}_k}{2\pi\mathrm{i} }.
\end{align}
In the above identity, replacing $x$ by $m-\frac{1}{2\pi \mathrm{i} }\lambda^{\left(n\right)}_j$, $m\in \mathbb{Z}_{>0}$, leads to following lemma that will be used later
\begin{lem}\label{ciq}
Under the conditions \eqref{ineq1} and \eqref{ineq2}, we have 
\begin{align*}
    \frac{\prod_{i=1}^n\left(m-\frac{1}{2\pi \mathrm{i} }\lambda^{\left(n\right)}_j+\frac{1}{2\pi \mathrm{i} }\lambda^{\left(n\right)}_i\right)}{\prod_{i=1}^{n-1}\left(m-\frac{1}{2\pi \mathrm{i} }\lambda^{\left(n\right)}_j+\frac{1}{2\pi \mathrm{i} }\lambda^{\left(n-1\right)}_i\right)}=m-\frac{1}{2\pi \mathrm{i} }\lambda^{\left(n\right)}_j+\frac{1}{2\pi \mathrm{i} }a_{n n}-\sum_{k=1}^{n-1}\frac{b^{\left(n-1\right)}_k}{2\pi \mathrm{i} } \frac{1}{m-\frac{1}{2\pi \mathrm{i} }\lambda^{\left(n\right)}_j+\frac{1}{2\pi \mathrm{i} }\lambda^{\left(n-1\right)}_k}\frac{a^{\left(n-1\right)}_k}{2\pi \mathrm{i} }.
\end{align*}
\end{lem}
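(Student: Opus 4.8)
The plan is to obtain Lemma \ref{ciq} as a one-variable specialization of the rational-function identity \eqref{x identity}. Recall that \eqref{x identity} was produced by Laplace expansion of $\det\!\left(x\cdot\mathrm{Id}_n+\frac{1}{2\pi\mathrm{i}}A_{n-1}\right)$ along the last column (or row) of the matrix $A_{n-1}$ displayed in \eqref{zhuazi}; concretely, the right-hand side of \eqref{x identity} is the partial-fraction decomposition of the left-hand side in the variable $x$, its simple poles sitting exactly at $x=-\lambda^{(n-1)}_k/(2\pi\mathrm{i})$ for $k=1,\dots,n-1$. Both sides are therefore honest rational functions of the single variable $x$, with coefficients the functions $a^{(n-1)}_k(A)$, $b^{(n-1)}_k(A)$, $a_{nn}$, $\lambda^{(n-1)}_i(A)$, $\lambda^{(n)}_i(A)$ already introduced, all of which are well defined under the standing hypotheses \eqref{ineq1}--\eqref{ineq2} (these hypotheses guarantee that $P_{n-1}$, hence $A_{n-1}$, and hence \eqref{x identity} itself, make sense).

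The proof then consists of substituting $x=m-\frac{1}{2\pi\mathrm{i}}\lambda^{(n)}_j$ into \eqref{x identity} for $m\in\mathbb{Z}_{>0}$ and rewriting. Under this substitution the numerator on the left becomes $\prod_{i=1}^{n}\bigl(m+\frac{1}{2\pi\mathrm{i}}(\lambda^{(n)}_i-\lambda^{(n)}_j)\bigr)$, the denominator becomes $\prod_{i=1}^{n-1}\bigl(m-\frac{1}{2\pi\mathrm{i}}\lambda^{(n)}_j+\frac{1}{2\pi\mathrm{i}}\lambda^{(n-1)}_i\bigr)$, the first two summands of the right-hand side become $m-\frac{1}{2\pi\mathrm{i}}\lambda^{(n)}_j+\frac{1}{2\pi\mathrm{i}}a_{nn}$, and the $k$-th term of the sum becomes $\frac{b^{(n-1)}_k}{2\pi\mathrm{i}}\,\bigl(m-\frac{1}{2\pi\mathrm{i}}\lambda^{(n)}_j+\frac{1}{2\pi\mathrm{i}}\lambda^{(n-1)}_k\bigr)^{-1}\,\frac{a^{(n-1)}_k}{2\pi\mathrm{i}}$. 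This is exactly the asserted identity; beyond this bookkeeping there is nothing further to do.

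The only point meriting a word of care — and it is the closest thing to an obstacle — is that the specialization be legitimate, i.e.\ that $x=m-\frac{1}{2\pi\mathrm{i}}\lambda^{(n)}_j$ not be one of the poles $-\lambda^{(n-1)}_k/(2\pi\mathrm{i})$ of the two sides, equivalently $m\neq\frac{1}{2\pi\mathrm{i}}\bigl(\lambda^{(n)}_j-\lambda^{(n-1)}_k\bigr)$ for all $k$. Since \eqref{x identity} is an equality of rational functions, it continues to hold verbatim at every such admissible value of $x$; in the later applications $m$ is a positive integer for which this condition holds, and in any case one may simply read \eqref{x identity} — and hence Lemma \ref{ciq} — as an identity of meromorphic functions, under which the substitution is automatic away from the common poles. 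Thus the lemma carries no genuine difficulty: all of its content is already in \eqref{x identity}, and Lemma \ref{ciq} is merely its evaluation at the shifted argument $m-\frac{1}{2\pi\mathrm{i}}\lambda^{(n)}_j$.
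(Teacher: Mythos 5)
Your proposal is correct and coincides with the paper's own argument: Lemma \ref{ciq} is obtained there exactly as you describe, by taking the identity \eqref{x identity} (itself coming from Laplace expansion of $\det\left(x+\frac{1}{2\pi\mathrm{i}}A_{n-1}\right)$ along the last column or row) and substituting $x=m-\frac{1}{2\pi\mathrm{i}}\lambda^{\left(n\right)}_j$. Your extra remark on avoiding the poles is a harmless refinement of the same one-line specialization.
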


Now let us use the conjugation of the $n\times n$ matrix $\operatorname{diag}\left({P}_{n-1},1\right)$ to diagonalize the upper left part of the equation \eqref{classical original equation}.
Since the matrices $E_n$ and $\operatorname{diag}\left({P}_{n-1},1\right)$ commute with each other, we get
\begin{proposition}\label{10}
Under the conditions \eqref{ineq1} and \eqref{ineq2}, if $F\left(z;E_n,A_{n-1}\right)$ is a fundamental solution of 
\begin{align}\label{classical new equation}
    \frac{\mathrm{d}}{\mathrm{d} z} F\left(z\right)=\left(\mathrm{i} E_n-\frac{1}{2\pi \mathrm{i} }A_{n-1}z^{-1}\right) F\left(z\right),
\end{align}
then $\operatorname{diag}\left({P}_{n-1},1\right) F\left(z;E_n,A_{n-1}\right)\operatorname{diag}\left({P}_{n-1}^{-1},1\right)$ is a fundamental solution of the equation \eqref{classical original equation}, where $A_{n-1}$ is given in \eqref{zhuazi}. Moreover, the Stokes matrices of the systems \eqref{classical original equation} and \eqref{classical new equation} are related by
\begin{equation*}
S_{\pm}\left(E_n, A\right)=\operatorname{diag}\left({P}_{n-1},1\right)S_{\pm}\left(E_n, A_{n-1}\right)\operatorname{diag}\left({P}_{n-1}^{-1},1\right).
\end{equation*}
\end{proposition}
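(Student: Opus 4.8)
The plan is to establish the two assertions of Proposition~\ref{10} in turn: first the gauge-transformation statement relating fundamental solutions of \eqref{classical new equation} and \eqref{classical original equation}, and then the conjugation formula for the Stokes matrices. For the first part, set $G:=\operatorname{diag}\left(P_{n-1},1\right)$. By definition \eqref{zhuazi} we have $A_{n-1}=G^{-1}AG$, and since $E_n=1\otimes E_{nn}$ is block-diagonal with its upper-left $(n-1)\times(n-1)$ block equal to zero, it commutes with $G$, i.e.\ $G^{-1}E_nG=E_n$. Hence, substituting $\widetilde{F}(z):=G\,F(z;E_n,A_{n-1})\,G^{-1}$ into \eqref{classical original equation}, one computes
\begin{equation*}
\frac{\mathrm{d}\widetilde F}{\mathrm{d}z}
= G\left(\mathrm{i}E_n-\frac{1}{2\pi\mathrm{i}}\frac{A_{n-1}}{z}\right)F\,G^{-1}
= \left(\mathrm{i}GE_nG^{-1}-\frac{1}{2\pi\mathrm{i}}\frac{GA_{n-1}G^{-1}}{z}\right)\widetilde F
= \left(\mathrm{i}E_n-\frac{1}{2\pi\mathrm{i}}\frac{A}{z}\right)\widetilde F,
\end{equation*}
so $\widetilde F$ solves \eqref{classical original equation}; invertibility of $G$ (from Proposition~\ref{PQ}, since $P_{n-1}$ is invertible under \eqref{ineq1}, \eqref{ineq2}) shows it is fundamental. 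The same computation run backwards gives the converse.

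For the Stokes-matrix part, the key observation is that the gauge transformation $F\mapsto GFG^{-1}$ preserves all the structure entering the definitions of Section~\ref{secStokes}: the irregular term $\mathrm{i}E_n$ is unchanged, so the anti-Stokes rays $d_i$ and Stokes sectors $\operatorname{Sect}_i$ are the same for both systems; and the exponent of formal monodromy transforms correctly, namely $G\,\delta_{n-1}(A_{n-1})\,G^{-1}=\delta_{n-1}(A)$ — this must be checked from \eqref{deltan-1} and \eqref{zhuazi}, noting that $\delta_{n-1}(A_{n-1})=\operatorname{diag}(A_{n-1}^{(n-1)},a_{nn})=\operatorname{diag}(\lambda^{(n-1)}_1,\dots,\lambda^{(n-1)}_{n-1},a_{nn})$ while $\delta_{n-1}(A)=\operatorname{diag}(A^{(n-1)},a_{nn})$, and $P_{n-1}^{-1}A^{(n-1)}P_{n-1}=\operatorname{diag}(\lambda^{(n-1)}_i)$ by construction. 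Consequently the formal solution $\hat F(z;E_n,A)$ of \eqref{classical original equation} equals $G\,\hat F(z;E_n,A_{n-1})\,G^{-1}$ (uniqueness of the formal solution forces this), and by the uniqueness in Proposition~\ref{classical resummation} the canonical sectorial solutions satisfy $F_i(z;E_n,A)=G\,F_i(z;E_n,A_{n-1})\,G^{-1}$ for every $i$.

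Finally I would plug this relation into Definition~\ref{classical Stokes matrix}. On $S(d_0-\tfrac\pi2,d_0+\tfrac\pi2)$ we have
\begin{equation*}
G\,F_{-1}(z;E_n,A_{n-1})\,G^{-1}\,\mathrm{e}^{-\frac{\delta_{n-1}(A)}{2}}\,S_+(E_n,A)=G\,F_0(z;E_n,A_{n-1})\,G^{-1},
\end{equation*}
and using $\mathrm{e}^{-\frac{\delta_{n-1}(A)}{2}}=G\,\mathrm{e}^{-\frac{\delta_{n-1}(A_{n-1})}{2}}\,G^{-1}$ one cancels $G$ on the left and $G^{-1}$ on the right to obtain
\begin{equation*}
F_{-1}(z;E_n,A_{n-1})\,\mathrm{e}^{-\frac{\delta_{n-1}(A_{n-1})}{2}}\,G^{-1}S_+(E_n,A)G=F_0(z;E_n,A_{n-1});
\end{equation*}
comparing with the defining relation for $S_+(E_n,A_{n-1})$ and using its uniqueness gives $G^{-1}S_+(E_n,A)G=S_+(E_n,A_{n-1})$, i.e.\ $S_+(E_n,A)=G\,S_+(E_n,A_{n-1})\,G^{-1}$; the argument for $S_-$ is identical, using the second bullet of Definition~\ref{classical Stokes matrix} and that $G$ commutes with $\mathrm{e}^{\delta_{n-1}(A)/2}$ only after the same conjugation rewriting. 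The main (and essentially only) obstacle is the bookkeeping verification that $G\,\delta_{n-1}(A_{n-1})\,G^{-1}=\delta_{n-1}(A)$ and hence that the formal-monodromy factors match under conjugation; everything else is the formal soft argument that a constant invertible gauge transformation commuting with the irregular type conjugates Stokes data. For the extension beyond the conditions \eqref{ineq1}, \eqref{ineq2}, one notes both sides of the final identity are analytic in $A$ on the nonresonant locus and invokes analytic continuation as already done in the proof of Proposition~\ref{PQ}.
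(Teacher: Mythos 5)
Your proof is correct and follows essentially the same route as the paper, which simply observes that $\operatorname{diag}(P_{n-1},1)$ commutes with the irregular term $\mathrm{i}E_n$ and conjugates $A_{n-1}$ into $A$ (hence $\delta_{n-1}(A_{n-1})$ into $\delta_{n-1}(A)$), so that formal and canonical sectorial solutions, and therefore the Stokes matrices, conjugate accordingly. You have merely spelled out the uniqueness arguments that the paper leaves implicit, so there is nothing to correct.
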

\subsection{Solutions of \eqref{classical new equation} and its Stokes matrices}\label{sol and stokes}
In this subsection, we compute the explicit expression of the Stokes matrices $S_\pm\left(E_n,A_{n-1}\right)$. The expression was already computed via different methods, like the associated function method, topological method and so on. See e.g. \cite{Balser, DHMS, DM, MH}.   Here we do the computation via the most direct way, i.e., via the known asymptotics of confluent hypergeometric functions. This approach can be directly applied to the quantum case, see Section {\ref{beginsec}}.

\begin{definition}\label{ghf}
A generalized hypergeometric function ${}_{p}F_{q}\left(\alpha_1,...,\alpha_p;\beta_1,...,\beta_q;z\right)$ associated to any $\alpha_i\in \mathbb{C}$, for $1\leq i \leq p$, and $\beta_j\in \mathbb{C}\backslash\{0,-1,-2,\cdots\}$, for $1\leq j \leq q$, is
    \begin{align*}
        {}_{p}F_{q}\left(\alpha_1,...,\alpha_p;\beta_1,...,\beta_q;z\right):=\sum_{k=0}^{\infty}\frac{\prod_{i=1}^p\left(\alpha_i\right)_k}{\prod_{j=1}^q\left(\beta_j\right)_k }\frac{z^k}{k!},
    \end{align*}
where $\left(\alpha\right)_0=1$ and $\left(\alpha\right)_k=\alpha \left(\alpha+1\right) \cdots \left(\alpha+k-1\right)$, for $k\geq 1$.
When $p=q$, the radius of convergence of ${}_{p}F_{q}\left(\alpha_1,...,\alpha_p;\beta_1,...,\beta_q;z\right)$, which is called a confluent hypergeometric function, is $\infty$.
\end{definition}

\begin{proposition}\label{classical solution}
Under the nonresonant condition \eqref{nonres} and the conditions \eqref{ineq1}, \eqref{ineq2}, the system \eqref{classical new equation} has a fundamental solution 
\begin{equation}\label{diagFz}
    F\left(z;E_n,A_{n-1}\right)=Y \cdot H\left(z\right) \cdot z^{-\frac{1}{2\pi \mathrm{i} }A_{n}},
\end{equation}
where $Y=\operatorname{diag}\left(a_{1}^{\left(n-1\right)}, ..., a_{n-1}^{\left(n-1\right)}, 1\right)$, and $A_{n}=\operatorname{diag}\left(\lambda^{\left(n\right)}_1,...,\lambda^{\left(n\right)}_{n}\right)$. The matrix $H\left(z\right)$ is an $n\times n$ matrix with entries as follows, for $1 \leq i \leq n-1$ and $1 \leq j \leq n$,
\begin{align*}
	&H\left(z\right)_{i j}=\frac{1}{\lambda_{j}^{\left(n\right)}-\lambda_{i}^{\left(n-1\right)}} \cdot{ }_{n-1} F_{n-1}\left(a_{i j, 1}, ..., a_{i j, n-1}; b_{i j, 1}, ..., \widehat{b_{i j, j}}, ..., b_{i j, n} ;\mathrm{i} z\right), \\
	&H\left(z\right)_{n j}={ }_{n-1} F_{n-1}\left(a_{n j, 1}, ..., a_{n j, n-1}; b_{n j, 1}, ..., \widehat{b_{n j, j}}, ..., b_{n j, n} ;\mathrm{i}  z\right),
\end{align*}
where the variables $\left\{a_{i j, l}\right\}$ and $\left\{b_{i j, l}\right\}$ are as follows
\begin{align*}
&a_{i j, i} =\frac{1}{2 \pi \mathrm{i} }\left(\lambda_{i}^{\left(n-1\right)}-\lambda_{j}^{\left(n\right)}\right), \quad 1\leq i \leq n-1, \, 1\leq j \leq n,\\
&a_{i j, l}=1+\frac{1}{2 \pi \mathrm{i} }\left(\lambda_{l}^{\left(n-1\right)}-\lambda_{j}^{\left(n\right)}\right),\quad 1 \leq i, j \leq n,\, 1 \leq l \leq n-1, \, l \neq i, \\
&b_{i j, l}=1+\frac{1}{2 \pi \mathrm{i} }\left(\lambda_{l}^{\left(n\right)}-\lambda_{j}^{\left(n\right)}\right), \quad 1 \leq i, j,l \leq n .
\end{align*}
The notation $\widehat{b_{i j, j}}$ means the term $b_{i j, j}$ is skipped, for $1\leq i \leq n$.
\end{proposition}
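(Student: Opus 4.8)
The plan is to reduce the matrix ODE \eqref{classical new equation} to a family of decoupled scalar second-order (confluent-hypergeometric-type) equations by exploiting the block structure of $A_{n-1}$ in \eqref{zhuazi}, and then to recognize the entries of the solution as the series $\,_{n-1}F_{n-1}$ of Definition \ref{ghf}. First I would substitute the ansatz $F(z)=Y\cdot H(z)\cdot z^{-\frac{1}{2\pi\mathrm{i}}A_n}$ into \eqref{classical new equation}. Since $z^{-\frac{1}{2\pi\mathrm{i}}A_n}$ is a fundamental solution of the diagonal system $\frac{\mathrm{d}}{\mathrm{d}z}G=-\frac{1}{2\pi\mathrm{i}}A_n z^{-1}G$, conjugating by it turns the equation for $H$ into
\begin{equation*}
z\frac{\mathrm{d}}{\mathrm{d}z}H(z)=\mathrm{i} z\,Y^{-1}E_n Y\cdot H(z)-\frac{1}{2\pi\mathrm{i}}\,Y^{-1}A_{n-1}Y\cdot H(z)+\frac{1}{2\pi\mathrm{i}}H(z)\,A_n,
\end{equation*}
so that, writing this out entrywise using that $Y^{-1}E_nY=E_n$ and that $Y^{-1}A_{n-1}Y$ differs from $A_{n-1}$ only in the last row/column (the diagonal block stays $\operatorname{diag}(\lambda^{(n-1)}_1,\dots,\lambda^{(n-1)}_{n-1},a_{nn})$), one obtains for each fixed column index $j$ a closed system in the $n$ unknowns $H(z)_{1j},\dots,H(z)_{nj}$.

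Next I would analyze this column system. For $1\le i\le n-1$ the $i$-th equation reads, schematically,
\begin{equation*}
z\,H'_{ij}=\Big(\tfrac{1}{2\pi\mathrm{i}}(\lambda^{(n)}_j-\lambda^{(n-1)}_i)\Big)H_{ij}+(\text{const})\cdot H_{nj},
\end{equation*}
because the only off-diagonal coupling in the top block of $Y^{-1}A_{n-1}Y$ lives in the last column; and the $n$-th equation couples $H_{nj}$ to $\mathrm{i} z\,H_{nj}$ and to a sum $\sum_{i} (\text{const})\,H_{ij}$ coming from the last row. Solving the first $n-1$ relations for $H_{ij}$ in terms of $H_{nj}$ and $z H_{nj}'$, substituting into the $n$-th equation, and clearing denominators $x+\tfrac{\lambda^{(n-1)}_i}{2\pi\mathrm{i}}$ using exactly the rational identity \eqref{x identity}, collapses everything to a single scalar ODE for $H_{nj}$ whose indicial/coefficient data are precisely the parameters $a_{nj,l}$ and $b_{nj,l}$ in the statement. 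That scalar equation is the confluent hypergeometric equation of type $\,_{n-1}F_{n-1}$ in the variable $\mathrm{i} z$; choosing the solution holomorphic at $z=0$ normalized to $1$ gives $H(z)_{nj}={}_{n-1}F_{n-1}(a_{nj,1},\dots;b_{nj,1},\dots,\widehat{b_{nj,j}},\dots;\mathrm{i} z)$, and back-substitution yields $H(z)_{ij}=\frac{1}{\lambda^{(n)}_j-\lambda^{(n-1)}_i}\,{}_{n-1}F_{n-1}(a_{ij,1},\dots;\dots)$. Finally, collecting the $n$ columns and checking that the resulting $H(z)$ is invertible near $z=0$ (its value at $z=0$ is, up to the diagonal factor $Y$, a Cauchy-type matrix $\big(\tfrac{1}{\lambda^{(n)}_j-\lambda^{(n-1)}_i}\big)$ bordered by a row of $1$'s, whose nonvanishing determinant follows from \eqref{ineq1}, \eqref{ineq2} and nonresonance) shows $F(z;E_n,A_{n-1})$ is a genuine fundamental solution.

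The main obstacle I anticipate is purely bookkeeping rather than conceptual: one must carry the conjugation by $Y$ and by $z^{-\frac{1}{2\pi\mathrm{i}}A_n}$ through carefully, keep track of which shifted parameters get the ``$+1$'' (the $l\ne i$ versus $l=i$ dichotomy in the $a_{ij,l}$), and verify that the elimination step produces exactly the same denominators as appear on the left side of \eqref{x identity} so that the reduction to a single $\,_{n-1}F_{n-1}$ equation is clean; the special column $j$ with the skipped parameter $\widehat{b_{ij,j}}$ (coming from the fact that $b_{ij,j}=1$ would make the hypergeometric series ill-defined, and indeed that index drops out of the reduced equation) needs to be handled with a little care. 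Once the algebra of \eqref{x identity} is in hand, the identification with ${}_{n-1}F_{n-1}$ and the verification of the asymptotics are immediate from Definition \ref{ghf} and the standard theory of confluent equations.
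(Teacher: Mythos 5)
Your proposal is correct in substance and, once unwound, is essentially the same computation as the paper's, just run in the opposite direction: the paper takes the explicit series $YH(z)=\sum_m L_m z^m$ with coefficients \eqref{LM1}--\eqref{LM2} and \emph{verifies} the coefficient recursion, where substituting the first $n-1$ rows \eqref{first n-1 row} into the last row \eqref{last row} and invoking Lemma \ref{ciq} --- i.e.\ exactly the Laplace-expansion identity \eqref{x identity} you plan to use --- collapses everything to the two-term recursion \eqref{recursion insert x} of ${}_{n-1}F_{n-1}$; you instead \emph{derive} the solution by eliminating each column system down to a scalar equation for $H_{nj}$ and recognizing the generalized hypergeometric equation, then back-substituting. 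Two caveats on your write-up. First, the elimination is not algebraic: the $i$-th relation is $\left(\theta-a_{ij,i}\right)H_{ij}=-\tfrac{1}{2\pi\mathrm{i}}H_{nj}$ with $\theta=z\frac{\mathrm{d}}{\mathrm{d}z}$, so $H_{ij}$ is recovered by inverting $\theta-a_{ij,i}$ (equivalently, work coefficientwise, where $\theta$ acts as multiplication by $m$ plus the exponent --- which is literally the paper's step), and the resulting scalar equation for $H_{nj}$ has order $n$, not two; your ``second-order'' phrasing is only accurate for $n=2$. Second, concerning the fundamental-solution check you add (which the paper's proof omits entirely): you need $a^{(n-1)}_i\neq 0$ for $Y^{-1}$ to exist, which the paper obtains from the residue of \eqref{x identity} at $x=\frac{\lambda^{(n-1)}_i}{2\pi\mathrm{i}}$ only later, in the proof of Proposition \ref{mainpro}; and the bordered Cauchy determinant of $H(0)$ vanishes whenever two eigenvalues $\lambda^{(n)}_j$ coincide, a case not excluded by \eqref{nonres}, \eqref{ineq1}, \eqref{ineq2}, so invertibility requires an extra genericity assumption (distinct eigenvalues of $A$) or an analytic-continuation remark rather than following from the stated hypotheses alone. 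Neither point is fatal --- the first is fixed by phrasing the elimination via $\theta$ or via power-series coefficients, the second by adding the genericity/continuation remark --- but both should be repaired before the argument is complete.
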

\begin{proof}
The series expansion of the matrix function \eqref{diagFz} is \begin{equation}\label{seriesexp}
    F\left(z;E_n,A_{n-1}\right)=\sum_{m=0}^{\infty} L_{m} z^{m} z^{-\frac{1}{2 \pi \mathrm{i} } A_{n}},
\end{equation}
where $L_m$ is the coefficient of the expansion of $Y H\left(z\right)=\sum L_m z^m$. Following Definition \ref{ghf}, the 
$n\times n$ matrix $L_m$ has entries
        \begin{align}\label{LM1}
            &\left(L_m\right)_{i j}=a^{\left(n-1\right)}_i \frac{\mathrm{i} ^m}{m!}\frac{1}{\lambda^{\left(n\right)}_j-\lambda^{\left(n-1\right)}_i}\frac{\prod_{l=1}^{n-1}\left(a_{i j,l}\right)_m}{\prod_{l=1,l\ne j}^{n}\left(b_{ij,l}\right)_m}, \ \ 1\leq i \leq n-1, \ \  1\leq j \leq n,\\ \label{LM2}
            &\left(L_m\right)_{nj}=\frac{\mathrm{i} ^m}{m!}\frac{\prod_{l=1}^{n-1}\left(a_{n j,l}\right)_m}{\prod_{l=1,l\ne j}^{n}\left(b_{nj,l}\right)_m}.
        \end{align}
Let us verify that \eqref{seriesexp}
is indeed a series solution of the system \eqref{classical new equation}. Plugging \eqref{seriesexp}
into \eqref{classical new equation} and comparing the coefficients, we only need to verify the recursion relation
\begin{align}\label{L0 requirement}
L_{0} A_{n}&=A_{n-1} L_{0}, \\
\label{recur}\allowdisplaybreaks
    m L_{m}-\frac{1}{2\pi\mathrm{i} }L_{m}A_{n}&=\mathrm{i} E_n L_{m-1}-\frac{1}{2\pi \mathrm{i} } A_{n-1} L_{m}, \ \ m\ge 1.
\end{align}
The matrix identity \eqref{L0 requirement} can be verified directly via the explicit expression of
\begin{align*}
    L_0=\left(\begin{array}{ccc}
 \frac{a_{1}^{\left(n-1\right)}}{\lambda_{1}^{\left(n\right)}-\lambda_{1}^{\left(n-1\right)}} & \cdots &  \frac{a_{1}^{\left(n-1\right)}}{\lambda_{n}^{\left(n\right)}-\lambda_{1}^{\left(n-1\right)}} \\
\vdots & & \vdots \\
 \frac{a_{n-1}^{\left(n-1\right)}}{\lambda_{1}^{\left(n\right)}-\lambda_{n-1}^{\left(n-1\right)}} & \cdots & \frac{a_{n-1}^{\left(n-1\right)} }{\lambda_{n}^{\left(n\right)}-\lambda_{n-1}^{\left(n-1\right)}} \\
1 & \cdots & 1
\end{array}\right).
\end{align*}
On the one hand, the first $n-1$ rows of the matrix identity \eqref{recur} are just
\begin{align}\label{first n-1 row}
    \left(m-\frac{1}{2\pi \mathrm{i} }\lambda^{\left(n\right)}_j\right)\left(L_m\right)_{i j}=-\frac{1}{2\pi \mathrm{i} }\lambda^{\left(n-1\right)}_i \left(L_m\right)_{i j}-\frac{1}{2\pi \mathrm{i} }a^{\left(n-1\right)}_i \left(L_m\right)_{n j},\quad 1\leq i\leq n-1,\, 1\leq j \leq n.
\end{align}
Replacing $\left(L_m\right)_{ij}$ and $\left(L_m\right)_{nj}$ by \eqref{LM1} and \eqref{LM2}, one checks that \eqref{first n-1 row} is indeed an identity. The last row of the matrix identity \eqref{recur} is 
\begin{align}\label{last row}
    \left(m-\frac{1}{2\pi \mathrm{i} }\lambda^{\left(n\right)}_j\right)\left(L_m\right)_{n j}=\mathrm{i}  \left(L_{m-1}\right)_{n j}-\frac{1}{2\pi \mathrm{i} }\sum_{k=1}^{n-1}b^{\left(n-1\right)}_k \left(L_m\right)_{k j}-\frac{1}{2\pi \mathrm{i} } a_{n n}\left(L_m\right)_{n j}.
\end{align}
To verify \eqref{last row}, by
substituting \eqref{first n-1 row} into \eqref{last row}, we should prove
\begin{align}\label{substitute first n-1 rows into the last row}
    \left(m-\frac{1}{2\pi \mathrm{i} }\lambda^{\left(n\right)}_j-\sum_{k=1}^{n-1}\frac{b^{\left(n-1\right)}_k}{2\pi \mathrm{i} } \frac{1}{m-\frac{1}{2\pi \mathrm{i} }\lambda^{\left(n\right)}_j+\frac{1}{2\pi \mathrm{i} }\lambda^{\left(n-1\right)}_k}\frac{a^{\left(n-1\right)}_k}{2\pi \mathrm{i} }+\frac{1}{2\pi \mathrm{i} }a_{n n}\right) \left(L_m\right)_{n j}= \mathrm{i}  \left(L_{m-1}\right)_{n j},\quad 1\leq j \leq n.
\end{align}
Replacing the left hand side of \eqref{substitute first n-1 rows into the last row} by the left side of the identity in Lemma \ref{ciq}, we see that \eqref{substitute first n-1 rows into the last row} is equivalent to
\begin{align}\label{recursion insert x}
    \frac{\prod_{i=1}^n\left(m-\frac{1}{2\pi \mathrm{i} }\lambda^{\left(n\right)}_j+\frac{1}{2\pi \mathrm{i} }\lambda^{\left(n\right)}_i\right)}{\prod_{i=1}^{n-1}\left(m-\frac{1}{2\pi \mathrm{i} }\lambda^{\left(n\right)}_j+\frac{1}{2\pi \mathrm{i} }\lambda^{\left(n-1\right)}_i\right)}\left(L_m\right)_{n j}=\mathrm{i} \left(L_{m-1}\right)_{n j}.
\end{align}
In the end, the identity $\eqref{recursion insert x}$ is verified directly via the expression of $L_m$ in \eqref{LM2}.

\end{proof}

Now let $\hat{F}\left(z;E_n,A_{n-1}\right)=\left(\mathrm{Id}+\sum_{m=1}^{\infty}K_m z^{-m}\right)z^{-\frac{\delta_{n-1}\left(A_{n-1}\right)}{2\pi \mathrm{i} }}\mathrm{e}^{\mathrm{i} E_n z}$ be the unique formal solution of \eqref{classical new equation}. Here recall that by definition of the operator $\delta_{n-1}$ and the form of $A_{n-1}$, $\delta_{n-1}\left(A_{n-1}\right)$ is just the diagonal part of the matrix $A_{n-1}$.
Let $F_0\left(z;E_n,A_{n-1}\right)$, $F_{-1}\left(z;E_n,A_{n-1}\right)$ be the corresponding actual solutions in ${\rm Sect}_0$ and ${\rm Sect}_{-1}$ given as in Section \ref{secStokes}. Then 
    
\begin{proposition}\label{mainpro}
    Under the assumption of Proposition \ref{classical solution}, the $n\times n$ transition matrices $U_0$ and $U_{-1}$, defined by
\begin{align}\label{U0}
&F\left(z;E_n,A_{n-1}\right) =F_0\left(z;E_n,A_{n-1}\right) \cdot Y U_{0},\quad z\in \operatorname{Sect}_0,\\ \label{U1}
&F\left(z;E_n,A_{n-1}\right)=F_{-1}\left(z;E_n,A_{n-1}\right) \cdot Y U_{-1},\quad z\in \operatorname{Sect}_{-1},
\end{align}
have the expression as follows, for $1\leq i \leq n-1$, and $1\leq j \leq n$,
\begin{align*}
    &\left(U_0\right)_{i j}=\frac{1}{\lambda^{\left(n\right)}_j-\lambda^{\left(n-1\right)}_i}\frac{\prod_{l=1,l\ne j}^{n} \Gamma\left(b_{ij,l}\right)}{\prod_{l=1,l\ne i}^{n-1} \Gamma\left(a_{ij,l}\right)}\frac{\prod_{l=1,l\ne i}^{n-1} \Gamma\left(a_{ij,l}-a_{ij,i}\right)}{\prod_{l=1,l\ne j}^{n}\Gamma\left(b_{ij,l}-a_{ij,i}\right)}\frac{1}{\mathrm{e}^{-\frac{\pi \mathrm{i} }{2}a_{ij,i}}},\\
    &\left(U_{-1}\right)_{i j}=\frac{1}{\lambda^{\left(n\right)}_j-\lambda^{\left(n-1\right)}_i}\frac{\prod_{l=1,l\ne j}^{n} \Gamma\left(b_{ij,l}\right)}{\prod_{l=1,l\ne i}^{n-1} \Gamma\left(a_{ij,l}\right)}\frac{\prod_{l=1,l\ne i}^{n-1} \Gamma\left(a_{ij,l}-a_{ij,i}\right)}{\prod_{l=1,l\ne j}^{n}\Gamma\left(b_{ij,l}-a_{ij,i}\right)}\frac{1}{\mathrm{e}^{\frac{3\pi \mathrm{i} }{2}a_{ij,i}}},
\end{align*}
and for $1\leq j\leq n$,
\begin{align*}
\left(U_0\right)_{nj}=\left(U_{-1}\right)_{nj}=\frac{\prod_{l=1,l \ne j}^{n} \Gamma\left(b_{nj,l}\right)}{\prod_{l=1}^{n-1}\Gamma\left(a_{nj,l}\right)}\frac{1}{\mathrm{e}^{\frac{\pi \mathrm{i} }{2}a_{nj,n}}},
\end{align*}
where for convenience, we denote
\begin{align*}
&\lambda^{\left(n-1\right)}_n:=a_{n n}
=\sum_{l=1}^{n}\lambda^{\left(n\right)}_l-\sum_{l=1}^{n-1}\lambda^{\left(n-1\right)}_l,\\
    &a_{n j,n}:=\sum_{l=1,l\ne j}^{n}b_{nj,l}-\sum_{l=1}^{n-1}a_{nj,l}=\frac{1}{2\pi \mathrm{i} }\left(\lambda^{\left(n-1\right)}_n-\lambda^{\left(n\right)}_j\right),
    \quad  1\leq j \leq n.
\end{align*}
\end{proposition}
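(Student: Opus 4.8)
The plan is to connect the two fundamental solutions — the explicit one $F(z;E_n,A_{n-1})$ built from confluent hypergeometric functions in Proposition \ref{classical solution}, and the canonical Borel-resummed solutions $F_0$, $F_{-1}$ of Proposition \ref{classical resummation} — by comparing their asymptotic expansions as $z\to\infty$ in the sectors $\operatorname{Sect}_0$ and $\operatorname{Sect}_{-1}$. Since both sides of \eqref{U0} (resp.\ \eqref{U1}) are fundamental solutions of the \emph{same} linear system \eqref{classical new equation}, the transition matrix $YU_0$ (resp.\ $YU_{-1}$) is a well-defined constant invertible matrix; the task is to evaluate its entries. First I would recall the classical asymptotic expansion of ${}_{p}F_{p}$ (Kummer-type) as the argument goes to infinity: for $w\to\infty$ in a suitable half-plane, ${}_{n-1}F_{n-1}(\alpha_1,\dots,\alpha_{n-1};\beta_1,\dots,\beta_{n-1};w)$ splits into a sum of two contributions, one with an $\mathrm{e}^{w}w^{\sum\alpha-\sum\beta}$ factor and one with a $w^{-\alpha_s}$ algebraic factor for each $s$, the coefficients being the explicit products of Gamma functions $\frac{\prod_l\Gamma(\beta_l)}{\prod_{l\neq s}\Gamma(\alpha_l)}\cdot\frac{\prod_{l\neq s}\Gamma(\alpha_l-\alpha_s)}{\prod_l\Gamma(\beta_l-\alpha_s)}$. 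This is exactly where the Gamma-quotients in the statement will come from.

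The key steps, in order: (1) substitute $w=\mathrm{i}z$ into each entry $H(z)_{ij}$ and $H(z)_{nj}$ of Proposition \ref{classical solution}, and feed in the explicit values of $\{a_{ij,l}\}$, $\{b_{ij,l}\}$; note that among the upper parameters the distinguished one is $a_{ij,i}$ (for the first $n-1$ rows) or $a_{nj,n}$ (for the last row, using the convention $\lambda^{(n-1)}_n:=a_{nn}$), since $a_{ij,i}=\frac{1}{2\pi\mathrm{i}}(\lambda^{(n-1)}_i-\lambda^{(n)}_j)$ and $a_{ij,l}-a_{ij,i}=1+\frac{1}{2\pi\mathrm{i}}(\lambda^{(n-1)}_l-\lambda^{(n-1)}_i)$, $b_{ij,l}-a_{ij,i}=1+\frac{1}{2\pi\mathrm{i}}(\lambda^{(n)}_l-\lambda^{(n-1)}_i)$. (2) Apply the two-term asymptotic expansion in $\operatorname{Sect}_0$: there $\arg(\mathrm{i}z)$ lies in the half-plane where the dominant term of ${}_{n-1}F_{n-1}$ is the exponentially growing $\mathrm{e}^{\mathrm{i}z}$-piece; matching the $\mathrm{e}^{\mathrm{i}E_nz}$-column of $\hat F(z;E_n,A_{n-1})$ against the growing part of $F(z;E_n,A_{n-1})$, and matching the purely algebraic $z^{-\delta_{n-1}(A_{n-1})/2\pi\mathrm{i}}$-columns against the $w^{-a_{ij,i}}$-pieces, pins down $YU_0$. (3) Carefully track the branch of $w^{-a_{ij,i}}=(\mathrm{i}z)^{-a_{ij,i}}$: on $\operatorname{Sect}_0$ one has $\mathrm{i}z=z\,\mathrm{e}^{\pi\mathrm{i}/2}$, producing the factor $\mathrm{e}^{-\frac{\pi\mathrm{i}}{2}a_{ij,i}}$, while on $\operatorname{Sect}_{-1}$ the argument is shifted so that $\mathrm{i}z=z\,\mathrm{e}^{-3\pi\mathrm{i}/2}$, producing $\mathrm{e}^{\frac{3\pi\mathrm{i}}{2}a_{ij,i}}$ — this explains the only difference between the $U_0$ and $U_{-1}$ formulas. (4) For the last row there is no $\frac{1}{\lambda^{(n)}_j-\lambda^{(n-1)}_i}$ prefactor and $Y$ acts trivially, and since $a_{nj,n}\notin$ the range that would flip dominance between the two sectors, $(U_0)_{nj}=(U_{-1})_{nj}$; the stated identity $a_{nj,n}=\sum_{l\neq j}b_{nj,l}-\sum_l a_{nj,l}$ is the bookkeeping that makes the exponent of the $\mathrm{e}^{\mathrm{i}z}$-piece come out to $-\delta_{n-1}(A_{n-1})/2\pi\mathrm{i}$ in the $(n,j)$ slot.

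The main obstacle I anticipate is the precise branch/phase bookkeeping in step (3): one must be scrupulous about which determination of $\arg(\mathrm{i}z)$ is used on each Stokes sector (recall $\operatorname{Sect}_i=S(d_i-\tfrac{\pi}{2},d_{i+1}+\tfrac{\pi}{2})$ with $d_i=i\pi-\tfrac{\pi}{2}$, so $\operatorname{Sect}_0$ and $\operatorname{Sect}_{-1}$ straddle the anti-Stokes rays $d_0=-\tfrac{\pi}{2}$ and $d_1=\tfrac{\pi}{2}$ differently), about the lift of $z^{-\delta_{n-1}(A_{n-1})/2\pi\mathrm{i}}$ to $\widetilde{\mathbb{C}^*}$, and about the fact that the asymptotics of ${}_{n-1}F_{n-1}$ on overlapping sectors requires choosing the correct one of the two Stokes-type expansions for the confluent function (the one with no spurious exponentially small term counted as dominant). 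Everything else — substituting parameter values, reading off Gamma-quotient coefficients, and verifying the claimed conventions for $\lambda^{(n-1)}_n$ and $a_{nj,n}$ — is routine once the asymptotic formula is invoked with the correct phases. A secondary, purely algebraic check is that the $\frac{1}{\lambda^{(n)}_j-\lambda^{(n-1)}_i}$ and $Y=\operatorname{diag}(a^{(n-1)}_1,\dots,a^{(n-1)}_{n-1},1)$ factors present in $F(z;E_n,A_{n-1})$ are carried through consistently, so that the final $U_0,U_{-1}$ are expressed exactly as stated with the $\frac{1}{\lambda^{(n)}_j-\lambda^{(n-1)}_i}$ retained and the $Y$ absorbed into the definition of the transition matrix.
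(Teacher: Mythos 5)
Your proposal is correct and follows essentially the same route as the paper: both compare the large-$z$ asymptotics of the explicit ${}_{n-1}F_{n-1}$ solution of Proposition \ref{classical solution} (via the two expansions of Lemma \ref{nfn}, with the branch of $\mathrm{i}z$ tracked separately on $\operatorname{Sect}_0$ and $\operatorname{Sect}_{-1}$) against the prescribed asymptotics of $F_0$ and $F_{-1}$, and read off the entries of $U_0$, $U_{-1}$ from the algebraic $z^{-a_{ij,i}}$-terms and the exponentially growing term. The only difference is that the paper additionally computes the first coefficient $K_1$ of the formal solution and checks its entries (and $a^{(n-1)}_i$) are nonzero, so that all $n$ matching equations per entry are available and mutually consistent, whereas you rely just on the leading diagonal and exponential matchings, which suffices for the determination.
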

\begin{proof}
    Since $F\left(z;E_n,A_{n-1}\right)$ and $F_0\left(z;E_n,A_{n-1}\right)$ are both solutions of the equation \eqref{classical new equation}, the existence of $U_0$ is obvious. 
    From the asymptotics of $F_0\left(z;E_n,T_{n-1}\right)$ within ${\rm Sect}_0$ and the identity \eqref{U0}, we have
\begin{align}\label{F asym}
    F\left(z;E_n,A_{n-1}\right) U_{0}^{-1} Y^{-1} z^{\frac{\delta_{n-1}\left(A_{n-1}\right)}{2\pi \mathrm{i} }}\mathrm{e}^{-\mathrm{i} E_n z}\sim \operatorname{Id}+\sum_{m=1}^{\infty}K_m z^{-m},\quad z\rightarrow \infty \text{ within } \operatorname{Sect}_0.
\end{align}
Following the recursion of the coefficients of the formal fundamental solution, we have
\begin{align*}
    \left[K_{m+1},\mathrm{i} E_n\right]=-\frac{1}{2\pi \mathrm{i} }A_{n-1}K_m+K_m\left(\frac{1}{2\pi \mathrm{i} }\delta_{n-1}\left(A_{n-1}\right)+m\right).
\end{align*}
So we have for $1\le i\neq k \le n-1,$
\begin{align}\label{F1}
    &\left(K_1\right)_{i k}=-\mathrm{i} \frac{a^{\left(n-1\right)}_i}{a^{\left(n-1\right)}_k}\frac{\prod_{l=1}^{n}\frac{1}{2\pi \mathrm{i} }\left(\lambda^{\left(n\right)}_l-\lambda^{\left(n-1\right)}_k\right)}{\prod^n_{l=1,\, l\ne k}\frac{1}{2\pi \mathrm{i} }\left(\lambda^{\left(n-1\right)}_l-\lambda^{\left(n-1\right)}_k\right)\left(\frac{1}{2\pi \mathrm{i} }\left(\lambda^{\left(n-1\right)}_i-\lambda^{\left(n-1\right)}_k\right)-1\right)},\\
    \label{F2}
    &\left(K_1\right)_{n k}=-\frac{2\pi}{a^{\left(n-1\right)}_k}\frac{\prod^{n}_{l=1}\frac{1}{2\pi \mathrm{i} }\left(\lambda^{\left(n\right)}_l-\lambda^{\left(n-1\right)}_k\right)}{\prod^{n-1}_{l=1,l\ne k}\frac{1}{2\pi \mathrm{i} }\left(\lambda^{\left(n-1\right)}_l-\lambda^{\left(n-1\right)}_k\right)},\\
    \label{Fn}
    &\left(K_1\right)_{i n}=\frac{a^{\left(n-1\right)}_i}{2\pi}.
\end{align}
Here $\eqref{F1}$, $\eqref{F2}$ and $\eqref{Fn}$ are meaningful because if we consider the residue of \eqref{x identity} at $x=\frac{\lambda^{\left(n-1\right)}_i}{2\pi \mathrm{i}}$, we have
\begin{equation*}
    b^{\left(n-1\right)}_i a^{\left(n-1\right)}_i=-\frac{\prod_{l=1}^{n}\left(\lambda^{\left(n\right)}_l-\lambda^{\left(n-1\right)}_i\right)}{\prod_{l=1,l\neq i}^{n-1}\left(\lambda^{\left(n-1\right)}_l-\lambda^{\left(n-1\right)}_i\right)},
\end{equation*}
whose right side is nonzero under the assumption of Proposition \ref{classical solution}, which ensures that $a^{\left(n-1\right)}_i\neq 0$. Moreover, for $1\leq i\neq j \leq n$, we have
\begin{equation*}
    \left(K_1\right)_{i j}\neq 0,
\end{equation*}
implying that (together with \eqref{F asym}) for $z\rightarrow \infty$ within $\mathrm{Sect}_0$,
\begin{align}\label{F entry asym1}
    &\sum_{l=1}^n F\left(z;E_n,A_{n-1}\right)_{i l} \left(U_0^{-1}\right)_{l i} Y_{i i}^{-1} z^{\frac{\lambda^{(n-1)}_i}{2\pi \mathrm{i}}} \mathrm{e}^{-\mathrm{i} \delta_{i n} z}\sim 1+\sum_{m\geq 1}\left(K_{m}\right)_{i i}z^{-m},\quad 1\leq i \leq n,\\
    &\sum_{l=1}^n F\left(z;E_n,A_{n-1}\right)_{i l} \left(U_0^{-1}\right)_{l j} Y_{j j}^{-1} z^{\frac{\lambda^{(n-1)}_j}{2\pi \mathrm{i}}} \mathrm{e}^{-\mathrm{i} \delta_{j n} z}\sim \sum_{m\geq 1}\left(K_{m}\right)_{i j}z^{-m},\quad 1\leq i\neq j \leq n.\label{F entry asym2}
\end{align}
Here the symbol $f\left(z\right)\sim g\left(z\right)$ in \eqref{F entry asym1}, \eqref{F entry asym2}, for some functions $f\left(z\right)$, $g\left(z\right)$, implies that $\frac{f\left(z\right)}{g\left(z\right)}\rightarrow 1$. Moreover, we can multiply the exponential functions and power functions in \eqref{F entry asym1}, \eqref{F entry asym2} to the right side and discard higher order terms, so
\begin{align}\label{F aysm eright1}
    &\sum_{l=1}^n F\left(z;E_n,A_{n-1}\right)_{i l} \left(U_0^{-1}\right)_{l i}\sim Y_{i i} z^{-\frac{\lambda^{(n-1)}_i}{2\pi \mathrm{i}}} \mathrm{e}^{\mathrm{i} \delta_{i n} z},\quad 1\leq i \leq n,\\
    &\sum_{l=1}^n F\left(z;E_n,A_{n-1}\right)_{i l} \left(U_0^{-1}\right)_{l j}\sim \frac{\left(K_{1}\right)_{i j}}{z} Y_{j j} z^{-\frac{\lambda^{(n-1)}_j}{2\pi \mathrm{i}}} \mathrm{e}^{\mathrm{i} \delta_{j n} z},\quad 1\leq i\neq j \leq n,\label{F aysm eright2}
\end{align}
and \eqref{F aysm eright1}, \eqref{F aysm eright2} in turn imply, for $1\leq i \leq n-1$ and $1\leq j \leq n$, when $z\rightarrow \infty$ in $\operatorname{Sect}_0$,
\begin{equation}
\begin{aligned}\label{Fij asymptotic behavior}
    F\left(z;E_n,A_{n-1}\right)_{i j}\sim &\sum_{k=1,k\ne i}^{n-1}\left(K_1\right)_{i k}z^{-\frac{1}{2\pi \mathrm{i} }\lambda^{\left(n-1\right)}_k-1}a^{\left(n-1\right)}_k\left(U_0\right)_{k j}\\
    &+z^{-\frac{1}{2\pi \mathrm{i} }\lambda^{\left(n-1\right)}_i}a^{\left(n-1\right)}_i\left(U_0\right)_{i j}\\
    &+\left(K_1\right)_{in}z^{-\frac{1}{2\pi \mathrm{i} }\lambda^{\left(n-1\right)}_n-1}\operatorname{e}^{\mathrm{i} z} \left(U_0\right)_{nj}
\end{aligned}
\end{equation}
and
\begin{align}\label{Fnj asymptotic behavior}
    F\left(z;E_n,A_{n-1}\right)_{n j}\sim \sum_{k=1}^{n-1}\left(K_1\right)_{n k}z^{-\frac{1}{2\pi \mathrm{i} }\lambda^{\left(n-1\right)}_k-1}a^{\left(n-1\right)}_k\left(U_0\right)_{k j}+z^{-\frac{1}{2\pi \mathrm{i} }\lambda^{\left(n-1\right)}_n}\operatorname{e}^{\mathrm{i} z} \left(U_0\right)_{nj}.
\end{align}

To find the coefficients on the right hand sides of \eqref{Fij asymptotic behavior} and \eqref{Fnj asymptotic behavior} (therefore to get the entries of $U_0$), we only need to study the asymptotics of the solution $F\left(z;E_n,A_{n-1}\right)$ given in Proposition \ref{classical solution}. It is computed using the following known asymptotic behavior of confluent hypergeometric functions, see. e.g., \cite{OLBC, Wasow}. The following lemma is derived from the expansions for large variable of confluent hypergeometric functions in \cite[Chapter 16]{OLBC}.
\begin{lem}\label{nfn}
If $z\in S\left(-\frac{\pi}{2},\frac{3\pi}{2}\right)$, the following asymptotic behavior holds
\begin{equation}
\begin{aligned}\label{asy 0 pi}
    {}_{n}F_{n}\left(\alpha_1,\cdots,\alpha_n;\beta_1,\cdots,\beta_n;z\right)\sim&\frac{\prod_{l=1}^{n}\Gamma\left(\beta_l\right)}{\prod_{l=1}^{n}\Gamma\left(\alpha_l\right)}\sum_{m=1}^n\Gamma\left(\alpha_m\right)\frac{\prod_{l=1,l\neq m}^n\Gamma\left(\alpha_l-\alpha_m\right)}{\prod_{l=1}^n\Gamma\left(\beta_l-\alpha_m\right)}\frac{1}{\left(z \mathrm{e}^{-\pi \mathrm{i} }\right)^{{\alpha_m}}} \\&+\frac{\prod_{l=1}^{n}\Gamma\left(\beta_l\right)}{\prod_{l=1}^{n}\Gamma\left(\alpha_l\right)}\operatorname{e}^zz^{\sum_{l=1}^n\left(\alpha_l-\beta_l\right)}.
\end{aligned}
\end{equation}
Similarly, if $z\in S\left(-\frac{3\pi}{2},\frac{\pi}{2}\right)$, then 
\begin{equation}
\begin{aligned}\label{asy -pi 0}
    {}_{n}F_{n}\left(\alpha_1,\cdots,\alpha_n;\beta_1,\cdots,\beta_n;z\right)\sim&\frac{\prod_{l=1}^{n}\Gamma\left(\beta_l\right)}{\prod_{l=1}^{n}\Gamma\left(\alpha_l\right)}\sum_{m=1}^n\Gamma\left(\alpha_m\right)\frac{\prod_{l=1,l\neq m}^n\Gamma\left(\alpha_l-\alpha_m\right)}{\prod_{l=1}^n\Gamma\left(\beta_l-\alpha_m\right)}\frac{1}{\left(z \mathrm{e}^{\pi \mathrm{i} }\right)^{{\alpha_m}}} \\&+\frac{\prod_{l=1}^{n}\Gamma\left(\beta_l\right)}{\prod_{l=1}^{n}\Gamma\left(\alpha_l\right)}\operatorname{e}^zz^{\sum_{l=1}^n\left(\alpha_l-\beta_l\right)}.
\end{aligned}
\end{equation}
Here the symbol $f\left(z\right)\sim g\left(z\right)$ means the limit of $\frac{f\left(z\right)}{g\left(z\right)}$ is 1, when $z\rightarrow \infty$ in a required sector of $\widetilde{\mathbb{C}^{*}}$.
\end{lem}
Now using \eqref{asy 0 pi} in Lemma \ref{nfn}, we get as $z\rightarrow\infty$ within $\mathrm{Sect}_0$, for $1\leq i \leq n-1$, $1\leq j \leq n$,
\begin{align}
&F\left(z;E_n,A_{n-1}\right)_{i j} \sim \frac{a^{(n-1)}_i}{\lambda^{\left(n\right)}_j-\lambda^{\left(n-1\right)}_i}\left(\sum_{k=1}^{n-1}C_{i j,k}\frac{1}{\mathrm{e}^{-\frac{\pi \mathrm{i} }{2}a_{ij,k}}} z^{-a_{i j,k}}+C_{i j,n}\mathrm{e}^{\mathrm{i} z}\frac{1}{\mathrm{e}^{\frac{\pi \mathrm{i} }{2}a_{ij,n}}} z^{-a_{ij,n}}\right)z^{-\frac{1}{2\pi \mathrm{i}}\lambda^{(n)}_j},  \label{asymptotic using nFn 1}\\
    &F\left(z;E_n,A_{n-1}\right)_{n j}\sim \left(\sum_{k=1}^{n-1}C_{n j,k}\frac{1}{\mathrm{e}^{-\frac{\pi \mathrm{i} }{2}a_{nj,k}}} z^{-a_{n j,k}}+C_{n j,n}\mathrm{e}^{\mathrm{i} z} \frac{1}{\mathrm{e}^{\frac{\pi \mathrm{i} }{2}a_{nj,n}}} z^{-a_{nj,n}}\right)z^{-\frac{1}{2\pi \mathrm{i}}\lambda^{(n)}_j} \label{asymptotic using nFn 2},
\end{align}
where the constants $C_{i j,k}$, for $1\leq i,j \leq n$, are
\begin{align*}
    &C_{i j,k}=\frac{\prod_{l=1,l\ne j}^{n} \Gamma\left(b_{ij,l}\right)}{\prod_{l=1,l\ne k}^{n-1} \Gamma\left(a_{ij,l}\right)}\frac{\prod_{l=1,l\ne k}^{n-1} \Gamma\left(a_{ij,l}-a_{ij,k}\right)}{\prod_{l=1,l\ne j}^{n}\Gamma\left(b_{ij,l}-a_{ij,k}\right)}, \ \ 1\leq k \leq n-1, \\
    &C_{i j,n}=\frac{\prod_{l=1,l \ne j}^{n} \Gamma\left(b_{ij,l}\right)}{\prod_{l=1}^{n-1}\Gamma\left(a_{ij,l}\right)},
\end{align*}
and $a_{i j,n}$, for $1\leq i \leq n-1$, $1\leq j \leq n$, are
\begin{equation*}
    a_{i j,n}=\sum_{l=1,l\ne j}^{n}b_{ij,l}-\sum_{l=1}^{n-1}a_{ij,l}
    =1+\frac{1}{2\pi \mathrm{i} }\left(\lambda^{\left(n-1\right)}_n-\lambda^{\left(n\right)}_j\right).
\end{equation*}
Comparing the coefficients in the asymptotic behaviors \eqref{Fij asymptotic behavior}, \eqref{Fnj asymptotic behavior}, \eqref{asymptotic using nFn 1} and \eqref{asymptotic using nFn 2}, we can determine $U_0$ as follows.

\begin{itemize}
    \item 
    For $1\leq k \leq n-1$ and $1\leq j \leq n$, we find that $\left(U_0\right)_{k j}$ satisfies the following $n$ equations
\begin{align*}
    a^{\left(n-1\right)}_i \frac{1}{\lambda^{\left(n\right)}_j-\lambda^{\left(n-1\right)}_i}C_{i j, k}\mathrm{e}^{\frac{\pi \mathrm{i} }{2}a_{ij,k}}&=\left(K_1\right)_{i k} a^{\left(n-1\right)}_k \left(U_0\right)_{k j}, \quad 1\leq i\neq k \leq n-1,\\
    a^{\left(n-1\right)}_k \frac{1}{\lambda^{\left(n\right)}_j-\lambda^{\left(n-1\right)}_k}C_{k j, k}\mathrm{e}^{\frac{\pi \mathrm{i} }{2}a_{kj,k}}&= a^{\left(n-1\right)}_k \left(U_0\right)_{k j},\\
    C_{nj,k}\mathrm{e}^{\frac{\pi \mathrm{i} }{2}a_{nj,k}} &=\left(K_1\right)_{n k}a^{\left(n-1\right)}_k \left(U_0\right)_{k j},
\end{align*}
which imply that (using identities \eqref{F1} and \eqref{F2})
\begin{align*}
    \left(U_0\right)_{k j}=\frac{1}{\lambda^{\left(n\right)}_j-\lambda^{\left(n-1\right)}_k}\frac{\prod_{l=1,l\ne j}^{n} \Gamma\left(b_{kj,l}\right)}{\prod_{l=1,l\ne k}^{n-1} \Gamma\left(a_{kj,l}\right)}\frac{\prod_{l=1,l\ne k}^{n-1} \Gamma\left(a_{kj,l}-a_{kj,k}\right)}{\prod_{l=1,l\ne j}^{n}\Gamma\left(b_{kj,l}-a_{kj,k}\right)}\frac{1}{\mathrm{e}^{-\frac{\pi \mathrm{i} }{2}a_{kj,k}}}.
\end{align*}
\item 
For $1\leq j \leq n$, we find that $\left(U_0\right)_{n j}$ satisfies the following $n$ equations
\begin{align*}
    a^{\left(n-1\right)}_i \frac{1}{\lambda^{\left(n\right)}_j-\lambda^{\left(n-1\right)}_i}C_{i j,n} \mathrm{e}^{-\frac{\pi \mathrm{i} }{2}a_{ij,n}}=\left(K_1\right)_{i n} \left(U_0\right)_{nj}, \quad 1\leq i \leq n-1,
\end{align*}
and
\begin{align*}
    C_{n j,n}\mathrm{e}^{-\frac{\pi \mathrm{i} }{2}a_{nj,n}}=\left(U_0\right)_{n j},
\end{align*}
which imply that (using the identity \eqref{Fn})
\begin{align*}
    \left(U_0\right)_{nj}=\frac{\prod_{l=1,l \ne j}^{n} \Gamma\left(b_{nj,l}\right)}{\prod_{l=1}^{n-1}\Gamma\left(a_{nj,l}\right)}\frac{1}{\mathrm{e}^{\frac{\pi \mathrm{i} }{2}a_{nj,n}}}.
\end{align*}
\end{itemize}
We can deduce $U_{-1}$ in the same manner as $U_0$ using \eqref{asy -pi 0} in Lemma \ref{nfn}.
\end{proof}
\begin{rmk}
    In the above proof, we see that the asymptotics of the last row of the solution $F\left(z;E_n,A_{n-1}\right)$, given in \eqref{Fnj asymptotic behavior}, already determines the whole matrix $U_0$.
\end{rmk}

\begin{proposition}\label{S+ diag sys}
Under the assumption of Proposition \ref{classical solution}, the Stokes matrix $S_{+}\left(E_n,A_{n-1}\right)$ of the system \eqref{classical new equation} is upper triangular with the form
\begin{equation}\label{explicit S+}
S_{+}\left(E_{n},A_{n-1}\right)=\left(\begin{array}{cc}
    \mathrm{e}^{\frac{ A^{\left(n-1\right)}_{n-1}}{2}} & b_{+}^{\left(n\right)}  \\
    0 & \mathrm{e}^{\frac{a_{nn}}{2}}
  \end{array}\right),
  \end{equation}
where $A^{\left(n-1\right)}_{n-1}=\mathrm{diag}\left(\lambda^{\left(n-1\right)}_1,...,\lambda^{\left(n-1\right)}_{n-1}\right)$, $b^{(n)}_{+}=\left(\big(b^{(n)}_{+}\big)_1,...,\big(b^{(n)}_{+}\big)_{n-1}\right)^{\intercal}$ and
\begin{equation*}
\big(b^{(n)}_{+}\big)_{j }=\frac{\mathrm{e}^{\frac{\lambda_{j}^{\left(n-1\right)}+\lambda_{n}^{\left(n-1\right)}}{4}} \prod_{l=1}^{n-1} \Gamma\left(1+\frac{\lambda_{l}^{\left(n-1\right)}-\lambda_{j}^{\left(n-1\right)}}{2 \pi \mathrm{i} }\right)}{\prod_{l=1}^{n} \Gamma\left(1+\frac{\lambda_{l}^{\left(n\right)}-\lambda_{j}^{\left(n-1\right)}}{2 \pi \mathrm{i} }\right)} \cdot a_{j}^{\left(n-1\right)}.
\end{equation*}
Here we recall the symbol $\lambda^{\left(n-1\right)}_n=a_{n n}$. The matrix $S_{-}\left(E_n,A_{n-1}\right)$ is lower triangular and can be computed in a similar way.
\end{proposition}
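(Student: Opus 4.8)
The plan is to reduce the computation to a purely algebraic identity between the transition matrices $U_0$ and $U_{-1}$ of Proposition~\ref{mainpro}. On the overlap sector $S(d_0-\tfrac{\pi}{2},d_0+\tfrac{\pi}{2})=\operatorname{Sect}_0\cap\operatorname{Sect}_{-1}$ both \eqref{U0} and \eqref{U1} are valid, so combining them with Definition~\ref{classical Stokes matrix} and cancelling the common left factor $F(z;E_n,A_{n-1})$ gives
\[
S_+(E_n,A_{n-1})=\mathrm{e}^{\frac{\delta_{n-1}(A_{n-1})}{2}}\,Y\,U_{-1}U_0^{-1}\,Y^{-1}.
\]
Since left multiplication by the diagonal matrix $\mathrm{e}^{\delta_{n-1}(A_{n-1})/2}$ and conjugation by the diagonal matrix $Y=\operatorname{diag}(a_1^{(n-1)},\dots,a_{n-1}^{(n-1)},1)$ preserve the block-triangular shape, it is enough to show that $U_{-1}U_0^{-1}$ is block upper triangular with $(n-1)\times(n-1)$ diagonal block $\mathrm{Id}_{n-1}$, lower-right entry $1$, and an explicit upper-right column $v=(v_1,\dots,v_{n-1})^{\intercal}$.

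From the explicit entries in Proposition~\ref{mainpro} I would extract two facts: first $(U_{-1})_{nj}=(U_0)_{nj}$ for all $j$, which already forces the last row of $U_{-1}U_0^{-1}$ to be $(0,\dots,0,1)$; and second, for $1\le i\le n-1$, $(U_{-1})_{ij}=(U_0)_{ij}\,\mathrm{e}^{-2\pi\mathrm{i}\,a_{ij,i}}=(U_0)_{ij}\,\mathrm{e}^{\lambda_j^{(n)}-\lambda_i^{(n-1)}}$. The heart of the argument is to bring, for $1\le i\le n-1$, the ratio $(U_0)_{ij}/(U_0)_{nj}$ into Cauchy-kernel form. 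Using that $a_{ij,l}-a_{ij,i}=1+\tfrac{1}{2\pi\mathrm{i}}(\lambda_l^{(n-1)}-\lambda_i^{(n-1)})$ and $b_{ij,l}-a_{ij,i}=1+\tfrac{1}{2\pi\mathrm{i}}(\lambda_l^{(n)}-\lambda_i^{(n-1)})$, nearly all $\Gamma$-factors in the quotient cancel, leaving $\tfrac{1}{\lambda_j^{(n)}-\lambda_i^{(n-1)}}$ times the single pair $\Gamma\big(1+\tfrac{\lambda_i^{(n-1)}-\lambda_j^{(n)}}{2\pi\mathrm{i}}\big)\Gamma\big(1+\tfrac{\lambda_j^{(n)}-\lambda_i^{(n-1)}}{2\pi\mathrm{i}}\big)$ and the phases $\mathrm{e}^{\pi\mathrm{i} a_{ij,i}/2}$ and $\mathrm{e}^{-\pi\mathrm{i} a_{nj,n}/2}$. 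The reflection formula $\Gamma(1+x)\Gamma(1-x)=\pi x/\sin(\pi x)$ with $x=\tfrac{\lambda_i^{(n-1)}-\lambda_j^{(n)}}{2\pi\mathrm{i}}$ turns $\tfrac{1}{\lambda_j^{(n)}-\lambda_i^{(n-1)}}\Gamma(1+x)\Gamma(1-x)$ into $\tfrac{\mathrm{e}^{\lambda_i^{(n-1)}/2}}{\mathrm{e}^{\lambda_j^{(n)}}-\mathrm{e}^{\lambda_i^{(n-1)}}}$ once the phases are absorbed, so that
\[
(U_0)_{ij}=(U_0)_{nj}\cdot\frac{\mathrm{e}^{\lambda_i^{(n-1)}/2}\,R_i}{\mathrm{e}^{\lambda_j^{(n)}}-\mathrm{e}^{\lambda_i^{(n-1)}}},\qquad R_i:=\frac{\mathrm{e}^{(\lambda_i^{(n-1)}+a_{nn})/4}\prod_{l=1,\,l\ne i}^{n-1}\Gamma\big(1+\tfrac{\lambda_l^{(n-1)}-\lambda_i^{(n-1)}}{2\pi\mathrm{i}}\big)}{\prod_{l=1}^{n}\Gamma\big(1+\tfrac{\lambda_l^{(n)}-\lambda_i^{(n-1)}}{2\pi\mathrm{i}}\big)},
\]
where $(U_0)_{nj}\neq0$ by the nonresonance hypothesis \eqref{nonres}.

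Given this Cauchy form the rest is formal: for $1\le i\le n-1$,
\[
(U_{-1})_{ij}-(U_0)_{ij}=(U_0)_{ij}\big(\mathrm{e}^{\lambda_j^{(n)}-\lambda_i^{(n-1)}}-1\big)=(U_0)_{nj}\cdot\mathrm{e}^{-\lambda_i^{(n-1)}/2}R_i,
\]
the factor $\mathrm{e}^{\lambda_j^{(n)}}-\mathrm{e}^{\lambda_i^{(n-1)}}$ cancelling and the answer being independent of $j$. Hence the $i$-th row of $U_{-1}$ equals the $i$-th row of $U_0$ plus $v_i:=\mathrm{e}^{-\lambda_i^{(n-1)}/2}R_i$ times the $n$-th row of $U_0$; multiplying on the right by $U_0^{-1}$ gives $(U_{-1}U_0^{-1})_{ik}=\delta_{ik}$ for $1\le k\le n-1$ and $(U_{-1}U_0^{-1})_{in}=v_i$. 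Substituting back into the first display, the diagonal block becomes $\mathrm{e}^{A^{(n-1)}_{n-1}/2}$, the lower-right entry becomes $\mathrm{e}^{a_{nn}/2}$, and the $i$-th entry of the upper-right column becomes $\mathrm{e}^{\lambda_i^{(n-1)}/2}a_i^{(n-1)}v_i=a_i^{(n-1)}R_i$, which is exactly $\big(b^{(n)}_+\big)_i$ once one notes that the missing $l=i$ term in the numerator of $R_i$ is $\Gamma(1)=1$. This proves \eqref{explicit S+}. The matrix $S_-(E_n,A_{n-1})$ is treated by the same scheme, using \eqref{asy -pi 0} to compute the transition matrix $U_1$ between $F(z;E_n,A_{n-1})$ and $F_1(z;E_n,A_{n-1})$ together with the identity $S_-=\mathrm{e}^{-\delta_{n-1}(A_{n-1})/2}S_1\,\mathrm{e}^{\delta_{n-1}(A_{n-1})/2}$ from the remark after Definition~\ref{classical Stokes matrix}; the analogous cancellation yields a lower-triangular matrix. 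The step I expect to be the main obstacle is the $\Gamma$-function bookkeeping in the middle paragraph: one must verify that $(U_0)_{ij}/(U_0)_{nj}$ really collapses to the single Cauchy kernel $1/(\mathrm{e}^{\lambda_j^{(n)}}-\mathrm{e}^{\lambda_i^{(n-1)}})$ with the correct scalar, and that the half-integer phases $\mathrm{e}^{\pm\pi\mathrm{i}\,a_{\bullet}/2}$ assemble precisely into $\mathrm{e}^{(\lambda_i^{(n-1)}+a_{nn})/4}$; after that, the self-cancellation of $\mathrm{e}^{\lambda_j^{(n)}}-\mathrm{e}^{\lambda_i^{(n-1)}}$ in the last display is the small miracle that makes $U_{-1}U_0^{-1}$ triangular with identity diagonal block.
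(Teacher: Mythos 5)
Your proposal is correct and follows essentially the same route as the paper: both reduce $S_+(E_n,A_{n-1})$ to the data of Proposition \ref{mainpro} via the relation $S_+=\mathrm{e}^{\delta_{n-1}(A_{n-1})/2}\,Y\,U_{-1}U_0^{-1}\,Y^{-1}$, and both carry out the same $\Gamma$-function cancellation together with the reflection formula to produce $\big(b^{(n)}_+\big)_j=a_j^{(n-1)}R_j$ (your Cauchy-kernel computation of $(U_0)_{ij}/(U_0)_{nj}$ checks out, including the phase bookkeeping). The only difference is organizational: the paper plugs in the triangular ansatz \eqref{explicit S+} and solves the resulting $n$ equations for each $\big(b^{(n)}_+\big)_j$, whereas you derive the block-triangular shape directly by exhibiting $U_{-1}U_0^{-1}$ as a rank-one update of the identity, a mild but harmless strengthening.
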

\begin{proof}
The Stokes matrix $S_+\left(E_n,A_{n-1}\right)$ of the system \eqref{classical new equation} is the unique matrix such that on the sector $z\in S\left(-\pi,0\right)$, we have $F_{-1}\left(z;E_n, A_{n-1}\right)\cdot \mathrm{e}^{-\frac{\delta_{n-1}\left(A\right)}{2}} S_{+}\left(E_n, A_{n-1}\right)=F_0\left(z;E_n, A_{n-1}\right) $. Then by Proposition \ref{mainpro}, we have 
\begin{equation}\label{S+ eq}
     S_{+}\left(E_n,A_{n-1}\right) Y U_0=\mathrm{e}^{\frac{\delta_{n-1}\left(A\right)}{2}} Y U_{-1}.
\end{equation} 
Plugging \eqref{explicit S+} into \eqref{S+ eq}, we find that $\left(b^{\left(n\right)}_{+}\right)_{j }$ satisfies $n$ equations as follows
\begin{equation*}
    \mathrm{e}^{\frac{\lambda^{\left(n-1\right)}_j}{2}} a^{\left(n-1\right)}_j \left(U_{0}\right)_{j k}+\big(b^{(n)}_{+}\big)_{j } \left(U_0\right)_{n k}=\mathrm{e}^{\frac{\lambda^{\left(n-1\right)}_j}{2}} a^{\left(n-1\right)}_j \left(U_{-1}\right)_{j k}, \quad 1\leq k \leq n,
\end{equation*}
which imply that
\begin{equation*}
    \big(b^{(n)}_{+}\big)_{j }=\mathrm{e}^{\frac{\lambda^{\left(n-1\right)}_j}{2}} a^{\left(n-1\right)}_j\left(\left(U_{-1}\right)_{j k}-\left(U_{0}\right)_{j k}\right)\left(U_0\right)_{n k}^{-1}.
\end{equation*}
From Proposition \ref{mainpro}, we have
\begin{equation}\label{U0 nk}
\begin{aligned}
    \left(U_0\right)_{n k}^{-1}
    &=\frac{\Gamma\left(1+\frac{1}{2\pi \mathrm{i} }\left(\lambda^{\left(n-1\right)}_j-\lambda^{\left(n\right)}_k\right)\right)\prod_{l=1,l\neq j}^{n-1}\Gamma\left(a_{nk,l}\right)}{\prod_{l=1,l \ne k}^{n} \Gamma\left(b_{nk,l}\right)}\mathrm{e}^{\frac{\pi \mathrm{i} }{2}a_{nk,n}}\\
    &=\frac{1}{2\pi \mathrm{i} }\frac{\Gamma\left(\frac{1}{2\pi \mathrm{i} }\left(\lambda^{\left(n-1\right)}_j-\lambda^{\left(n\right)}_k\right)\right)\prod_{l=1,l\neq j}^{n-1}\Gamma\left(a_{n k,l}\right)}{\prod_{l=1,l \ne k}^{n} \Gamma\left(b_{nk,l}\right)}\left(\lambda^{\left(n-1\right)}_j-\lambda^{\left(n\right)}_k\right)\mathrm{e}^{\frac{\pi \mathrm{i} }{2}a_{nk,n}}\\
    &=\frac{1}{2\pi \mathrm{i} }\frac{\pi \prod_{l=1,l\neq j}^{n-1}\Gamma\left(a_{n k,l}\right)\left(\lambda^{\left(n-1\right)}_j-\lambda^{\left(n\right)}_k\right)\mathrm{e}^{\frac{\pi \mathrm{i} }{2}a_{nk,n}}}{\mathrm{sin}\left(\frac{1}{2 \mathrm{i} }\left(\lambda^{\left(n-1\right)}_j-\lambda^{\left(n\right)}_k\right)\right)\Gamma\left(1-\frac{1}{2\pi \mathrm{i} }\left(\lambda^{\left(n-1\right)}_j-\lambda^{\left(n\right)}_k\right)\right)\prod_{l=1,l \ne k}^{n} \Gamma\left(b_{nk,l}\right)},
\end{aligned}
\end{equation}
and 
\begin{equation}
\begin{aligned}\label{U1-U0}
    \left(U_{-1}\right)_{j k}-\left(U_{0}\right)_{j k}
    &=\frac{1}{\mathrm{e}^{\frac{\pi\mathrm{i} }{2}a_{jk,j}}}\frac{\frac{1}{\mathrm{e}^{\pi \mathrm{i} a_{jk,j}}}-\frac{1}{\mathrm{e}^{-\pi \mathrm{i} a_{jk,j}}}}{\lambda^{\left(n\right)}_k-\lambda^{\left(n-1\right)}_j}\frac{\prod_{l=1,l\ne k}^{n} \Gamma\left(b_{jk,l}\right)}{\prod_{l=1,l\ne j}^{n-1} \Gamma\left(a_{jk,l}\right)}\frac{\prod_{l=1,l\ne j}^{n-1} \Gamma\left(a_{jk,l}-a_{jk,j}\right)}{\prod_{l=1,l\ne k}^{n}\Gamma\left(b_{jk,l}-a_{jk,j}\right)}\\
    &=\frac{1}{ \mathrm{e}^{\frac{\pi\mathrm{i} }{2}a_{jk,j}}} \frac{-2\mathrm{i} \cdot\mathrm{sin}\left(a_{j k,j}\pi\right)}{\lambda^{\left(n\right)}_k-\lambda^{\left(n-1\right)}_j}\frac{\prod_{l=1,l\ne k}^{n} \Gamma\left(b_{jk,l}\right)}{\prod_{l=1,l\ne j}^{n-1} \Gamma\left(a_{jk,l}\right)}\frac{\prod_{l=1,l\ne j}^{n-1} \Gamma\left(a_{jk,l}-a_{jk,j}\right)}{\prod_{l=1,l\ne k}^{n}\Gamma\left(b_{jk,l}-a_{jk,j}\right)}.
\end{aligned}
\end{equation}
Here the last row of \eqref{U0 nk} is derived from the Euler’s reflection formula
\begin{equation*}
    \Gamma\left(1-\frac{1}{2\pi \mathrm{i} }\left(\lambda^{\left(n-1\right)}_j-\lambda^{\left(n\right)}_k\right)\right)\Gamma\left(\frac{1}{2\pi \mathrm{i} }\left(\lambda^{\left(n-1\right)}_j-\lambda^{\left(n\right)}_k\right)\right)=\frac{\pi}{\sin\left(\frac{1}{2 \mathrm{i} }\left(\lambda^{\left(n-1\right)}_j-\lambda^{\left(n\right)}_k\right)\right)}
\end{equation*}
and the last row of \eqref{U1-U0} is derived from the Euler's formula
\begin{equation*}
    \frac{1}{\mathrm{e}^{\pi \mathrm{i} a_{jk,j}}}-\frac{1}{\mathrm{e}^{-\pi \mathrm{i} a_{jk,j}}}=-2\mathrm{i} \cdot\mathrm{sin}\left(a_{j k,j}\pi\right).
\end{equation*}
Combining \eqref{U0 nk} and \eqref{U1-U0}, we have
\begin{equation*}
\big(b^{(n)}_{+}\big)_{j }=\frac{\mathrm{e}^{\frac{\lambda_{j}^{\left(n-1\right)}+\lambda_{n}^{\left(n-1\right)}}{4}} \prod_{l=1}^{n-1} \Gamma\left(1+\frac{\lambda_{l}^{\left(n-1\right)}-\lambda_{j}^{\left(n-1\right)}}{2 \pi \mathrm{i} }\right)}{\prod_{l=1}^{n} \Gamma\left(1+\frac{\lambda_{l}^{\left(n\right)}-\lambda_{j}^{\left(n-1\right)}}{2 \pi \mathrm{i} }\right)} \cdot a_{j}^{\left(n-1\right)}.
\end{equation*}
\end{proof}

\subsection{Extension to all nonresonant $A$}\label{secextension}
It follows from Proposition \ref{10} that
under the assumption of Proposition \ref{classical solution}, we have
    \begin{equation}\label{Sexp}
        S_{+}\left(E_n,A\right)=\operatorname{diag}\left({P}_{n-1},1\right)S_{+}\left(E_n,A_{n-1}\right)\operatorname{diag}\left({P}_{n-1}^{-1},1\right).
    \end{equation}
The $S_{-}\left(E_n,A\right)$ case is similar.
However, by analytic continuation, if we just require that $\frac{1}{2\pi \mathrm{i} }\left(\lambda^{\left(n\right)}_i-\lambda^{\left(n\right)}_j\right)$ is either zero or not an integer, (i.e., whenever the Stokes matrices are well defined), we still have \eqref{Sexp}.

\begin{proposition}\label{main stokes}
Under the nonresonant condition \eqref{nonres}, the Stokes matrix $S_{+}\left(E_{n},A\right)$ of the system \eqref{classical original equation} takes the block matrix form
\begin{equation*}
S_{+}\left(E_{n},A\right)=\left(\begin{array}{cc}
    \mathrm{e}^{\frac{A^{\left(n-1\right)}}{2}} & b_{+}  \\
    0 & \mathrm{e}^{\frac{a_{nn}}{2}}
  \end{array}\right),
  \end{equation*}
where $b_{+}=\left(\left(b_+\right)_1,...,\left(b_{+}\right)_{n-1}\right)^{\intercal}$ with
\begin{equation}\label{b expli by pr}
\left(b_{+}\right)_k=
\sum_{i=1}^{n-1}\sum_{j=1}^{n-1} \frac{\mathrm{e}^{\frac{\lambda_{i}^{\left(n-1\right)}+a_{nn}}{4}} \prod_{l=1}^{n-1} \Gamma\left(1+\frac{\lambda_{l}^{\left(n-1\right)}-\lambda_{i}^{\left(n-1\right)}}{2 \pi \mathrm{i} }\right)}{\prod_{l=1}^{n} \Gamma\left(1+\frac{\lambda_{l}^{\left(n\right)}-\lambda_{i}^{\left(n-1\right)}}{2 \pi \mathrm{i} }\right)} \cdot \left(\mathrm{pr}_i\right)_{k j} \cdot a_{jn}.
\end{equation}
Here for $1\leq i \leq n-1$, $\mathrm{pr}_i$ is a matrix as follows
\begin{equation*}
    \mathrm{pr}_i=\prod_{l=1,l \neq i}^{n-1} \frac{A^{(n-1)}-\lambda^{(n-1)}_l\cdot \mathrm{Id}_{n-1}}{\lambda^{(n-1)}_i-\lambda^{(n-1)}_l}.
\end{equation*}
\end{proposition}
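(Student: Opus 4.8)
The plan is to feed the explicit upper‑triangular Stokes matrix of the diagonalized system, Proposition~\ref{S+ diag sys}, into the conjugation relation \eqref{Sexp}, and then to recognize the data produced by conjugating with $\operatorname{diag}(P_{n-1},1)$ as the spectral projectors $\mathrm{pr}_i$ of $A^{(n-1)}$. First I would work under the genericity conditions \eqref{ineq1} and \eqref{ineq2}, so that $P_{n-1}$ is invertible and \eqref{Sexp} is available. Conjugating the block matrix \eqref{explicit S+} by $\operatorname{diag}(P_{n-1},1)$ leaves the lower‑right entry $\mathrm{e}^{a_{nn}/2}$ and the zero lower‑left block unchanged; the upper‑left block becomes $P_{n-1}\,\mathrm{e}^{A^{(n-1)}_{n-1}/2}\,P_{n-1}^{-1}=\mathrm{e}^{P_{n-1}A^{(n-1)}_{n-1}P_{n-1}^{-1}/2}=\mathrm{e}^{A^{(n-1)}/2}$, the last equality being the intertwining identity \eqref{PAAP} of Proposition~\ref{PQ}; and the off‑diagonal block is the column vector $P_{n-1}\,b_+^{(n)}$, i.e.\ $(b_+)_k=\sum_{i=1}^{n-1}(P_{n-1})_{ki}\,(b_+^{(n)})_i$.

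Next I would substitute the defining formula $a_i^{(n-1)}=\sum_{v=1}^{n-1}(P_{n-1}^{-1})_{iv}\,a_{vn}$ from \eqref{zhuazi} into the expression for $(b_+^{(n)})_i$ given in Proposition~\ref{S+ diag sys}; since the $\Gamma$‑quotient and exponential prefactor there depend only on the eigenvalue label $i$, they may be pulled outside the $v$‑sum, leaving
\[
(b_+)_k=\sum_{i=1}^{n-1}\sum_{v=1}^{n-1}\frac{\mathrm{e}^{\frac{\lambda_i^{(n-1)}+a_{nn}}{4}}\prod_{l=1}^{n-1}\Gamma\!\big(1+\tfrac{\lambda_l^{(n-1)}-\lambda_i^{(n-1)}}{2\pi\mathrm{i}}\big)}{\prod_{l=1}^{n}\Gamma\!\big(1+\tfrac{\lambda_l^{(n)}-\lambda_i^{(n-1)}}{2\pi\mathrm{i}}\big)}\,(P_{n-1})_{ki}(P_{n-1}^{-1})_{iv}\,a_{vn}.
\]
The key step is the identification $(P_{n-1})_{ki}(P_{n-1}^{-1})_{iv}=(P_{n-1}E_{ii}P_{n-1}^{-1})_{kv}=(\mathrm{pr}_i)_{kv}$: indeed \eqref{PAAP} gives $A^{(n-1)}=P_{n-1}A^{(n-1)}_{n-1}P_{n-1}^{-1}$ with $A^{(n-1)}_{n-1}=\operatorname{diag}(\lambda_1^{(n-1)},\dots,\lambda_{n-1}^{(n-1)})$, whose entries are distinct by \eqref{ineq1}, so the Lagrange interpolation identity yields $\prod_{l\ne i}\tfrac{A^{(n-1)}_{n-1}-\lambda_l^{(n-1)}\mathrm{Id}}{\lambda_i^{(n-1)}-\lambda_l^{(n-1)}}=E_{ii}$, and conjugating this identity by $P_{n-1}$ turns its left‑hand side into $\mathrm{pr}_i$. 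Inserting $(\mathrm{pr}_i)_{kv}$ and renaming $v\mapsto j$ produces exactly \eqref{b expli by pr}.

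Finally, the auxiliary conditions \eqref{ineq1} and \eqref{ineq2} are removed by analytic continuation, exactly as announced just before the statement: both sides of the identity just established are analytic functions of $A$ on the nonresonant locus \eqref{nonres} (with $A^{(n-1)}$ of simple spectrum), and they coincide on the open dense subset carved out by \eqref{ineq1}--\eqref{ineq2}, hence everywhere; in the coalescing case the combination $\sum_i(\cdots)\,\mathrm{pr}_i$ is read through the usual confluence of the Lagrange--Sylvester formula. I do not expect a genuine obstacle: the only point demanding care is the left/right placement of $P_{n-1}$ when the conjugation is split into blocks and the matching of the eigenvalue label of $b_+^{(n)}$ with the column index of $\mathrm{pr}_i$, whereas the conceptual content — that conjugation by the diagonalizing matrix turns the diagonal projector $E_{ii}$ into the spectral projector $\mathrm{pr}_i$ — is immediate from Proposition~\ref{PQ}.
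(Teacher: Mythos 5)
Your proposal is correct and follows essentially the same route as the paper: conjugate the diagonalized Stokes matrix of Proposition~\ref{S+ diag sys} by $\operatorname{diag}(P_{n-1},1)$ via \eqref{Sexp}, expand $a_i^{(n-1)}$ through $P_{n-1}^{-1}$, and recognize $(P_{n-1})_{ki}(P_{n-1}^{-1})_{ij}$ as $(\mathrm{pr}_i)_{kj}$, with the remaining cases handled by analytic continuation. The only (cosmetic) difference is how the projector identity is justified: you obtain $\mathrm{pr}_i=P_{n-1}E_{ii}P_{n-1}^{-1}$ by Lagrange interpolation of the diagonal matrix $A^{(n-1)}_{n-1}$, whereas the paper derives the equivalent rank-one expression \eqref{pri} from Cayley--Hamilton together with the eigenvector structure of $\mathcal{P}_{n-1}$, $\mathcal{Q}_{n-1}$ and $D_{n-1}$.
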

\begin{proof}
From direct computation, the explicit expression of $\operatorname{diag}\left({P}_{n-1},1\right)S_{+}\left(E_n,A\right)\operatorname{diag}\left({P}_{n-1}^{-1},1\right)$ tells us that
\begin{equation}\label{b+k}
    \left(b_{+}\right)_k=\sum_{i=1}^{n-1}\sum_{j=1}^{n-1} \frac{\mathrm{e}^{\frac{\lambda_{i}^{\left(n-1\right)}+a_{nn}}{4}} \prod_{l=1}^{n-1} \Gamma\left(1+\frac{\lambda_{l}^{\left(n-1\right)}-\lambda_{i}^{\left(n-1\right)}}{2 \pi \mathrm{i} }\right)}{\prod_{l=1}^{n} \Gamma\left(1+\frac{\lambda_{l}^{\left(n\right)}-\lambda_{i}^{\left(n-1\right)}}{2 \pi \mathrm{i} }\right)} \cdot \frac{\left(\mathcal{P}_{n-1}\right)_{ki}\left(\mathcal{Q}_{n-1}\right)_{ij}}{\left(D_{n-1}\right)_{i}} \cdot a_{jn}.
\end{equation}
From Cayley-Hamilton theorem in linear algebra, we have
\begin{equation*}
    \prod_{l=1}^{n-1}\left(A^{(n-1)}-\lambda^{(n-1)}_l\cdot \mathrm{Id}_{n-1}\right)=0.
\end{equation*}
Then we have
\begin{equation}\label{APr PrA}
    A^{(n-1)} \mathrm{pr}_i=\mathrm{pr}_i A^{(n-1)}=\lambda^{(n-1)}_i \mathrm{pr}_i,
\end{equation}
and by division with remainder,
\begin{equation}\label{PrPrPr}
    \mathrm{pr}_i \cdot \mathrm{pr}_i=\mathrm{pr}_i.
\end{equation}
From \eqref{APr PrA} and \eqref{PrPrPr}, every column (row) of $\mathrm{pr}_i$ is column (row) eigenvector with eigenvalue $\lambda^{\left(n-1\right)}_i$ and $\mathrm{pr}_i$ has the following form
\begin{equation}\label{pri}
\mathrm{pr}_{i}=\left(\left(\mathcal{P}_{n-1}\right)_{1i},...,\left(\mathcal{P}_{n-1}\right)_{n-1,i}\right)^{\intercal}\cdot \frac{1}{\left(D_{n-1}\right)_i} \cdot \left(\left(\mathcal{Q}_{n-1}\right)_{i1},...,\left(\mathcal{Q}_{n-1}\right)_{i,n-1}\right).
\end{equation}
Using \eqref{pri}, the identity \eqref{b+k} can be rewritten as \eqref{b expli by pr}. 
\end{proof}

We note that the expression $\mathrm{diag}\left(P_{n-1},1\right) F\left(z;E_n,A_{n-1}\right) \mathrm{diag}\left(P_{n-1}^{-1},1\right)$, with $F\left(z;E_n,A_{n-1}\right)$ the solution of the diagonalized equation \eqref{classical new equation} given as in Proposition \ref{classical solution}, smoothly extends from $A$ satisfying \eqref{nonres}, \eqref{ineq1}, \eqref{ineq2}, to those $A$ satisfying the nonresonant condition \eqref{nonres},
and gives the fundamental solution of the equation \eqref{classical original equation}. The Stokes matrices $S_{\pm}\left(E_{n},A\right)$ are analytic function with respect to all $A$ satisfying the nonresonant condition \eqref{nonres}, which also follows from the definition of the Stokes matrices of the nonresonant equation.

\section{The explicit expression of Stokes matrices of quantum equation \texorpdfstring{\eqref{introqeq}}{d}}\label{beginsec}
This section computes explicitly the quantum Stokes matrices of the linear system \eqref{introqeq}. 
Its structure is parallel to Section \ref{sec2}. In particular, Section \ref{qdiff and qstokes} introduces the Stokes data of the quantum linear systems \eqref{introqeq}. Section \ref{sec:qminor}--\ref{sec:diag}
introduces a gauge transformation that diagonalizes the upper left block of the quantum equation \eqref{introqeq}.  Section \ref{sec:qkummer} studies asymptotics of the solution of the equation after gauge transformation and then derives the explicit expression of its quantum Stokes matrices via the asymptotics.
Section \ref{qsecextension} then obtains the quantum Stokes matrices of the quantum equation \eqref{introqeq}.

\subsection{The Stokes matrices of quantum  equation \eqref{introqeq}}
\label{qdiff and qstokes}
\begin{proposition}\label{uniformal}
For any nonzero real number $h$, the ordinary differential equation \eqref{introqeq}
\begin{eqnarray*}
\frac{\mathrm{d}F}{\mathrm{d}z}=h\left(\mathrm{i} E_n+\frac{1}{2\pi \mathrm{i} }\frac{T}{z}\right)\cdot F,
\end{eqnarray*}
has a unique formal fundamental solution of the form \begin{eqnarray}\label{formalsum}
\hat{F}_h\left(z;E_n,T\right)=\hat{H}\left(z\right) \mathrm{e}^{{h\mathrm{i} E_nz}}z^{\frac{h\delta_{n-1}\left(T\right)}{2\pi\mathrm{i}}}, \ \ \ {\it for} \ \hat{H}=\mathrm{Id}+H_1z^{-1}+H_2z^{-2}+\cdot\cdot\cdot, \end{eqnarray}
where each coefficient $H_m\in{\rm End}\left(L\left(\lambda\right)\right)\otimes{\rm End}\left(\mathbb{C}^n\right)$, and 
\begin{equation}\label{qdeltan-1}
    \delta_{n-1}\left(T\right)_{ij}=\left\{
          \begin{array}{lr}
             T_{ij},   & \text{if} \ \ 1\le i, j\le n-1, \ \text{or} \ i=j=n ; \\
           0, & \text{otherwise}.
             \end{array}
\right.
\end{equation}
\end{proposition}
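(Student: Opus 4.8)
The plan is the classical one for differential equations with an irregular singularity: substitute the ansatz \eqref{formalsum} into \eqref{introqeq}, extract a recursion for the coefficients $H_m$, and show the recursion has a unique solution. First I would observe that $E_n=1\otimes E_{nn}$ and $\delta_{n-1}(T)$ are both block diagonal for the splitting $\mathbb{C}^n=\mathbb{C}^{n-1}\oplus\mathbb{C}e_n$, so they commute, and hence $\mathrm{e}^{h\mathrm{i}E_nz}$ commutes with $z^{h\delta_{n-1}(T)/(2\pi\mathrm{i})}$. Plugging \eqref{formalsum} into \eqref{introqeq}, differentiating, and cancelling the common right factor $\mathrm{e}^{h\mathrm{i}E_nz}z^{h\delta_{n-1}(T)/(2\pi\mathrm{i})}$, one finds that $\hat F_h$ is a solution if and only if $\hat H'+\hat H\cdot h\mathrm{i}E_n-h\mathrm{i}E_n\cdot\hat H=\frac{h}{2\pi\mathrm{i}\,z}\big(T\hat H-\hat H\,\delta_{n-1}(T)\big)$. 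Writing $\hat H=\mathrm{Id}+\sum_{m\ge1}H_mz^{-m}$ and comparing coefficients of $z^{-m-1}$, the $z^{-1}$ term gives $[H_1,\mathrm{i}E_n]=\frac{1}{2\pi\mathrm{i}}\big(T-\delta_{n-1}(T)\big)$, and for $m\ge1$ one recovers the recursion $[H_{m+1},\mathrm{i}E_n]=\big(\frac{m}{h}+\frac{T}{2\pi\mathrm{i}}\big)H_m-H_m\frac{\delta_{n-1}(T)}{2\pi\mathrm{i}}=:R_m$ quoted in the introduction. So it suffices to prove that this system has exactly one solution $(H_m)_{m\ge1}$ with each $H_m\in\mathrm{End}(L(\lambda))\otimes\mathrm{End}(\mathbb{C}^n)$.

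The engine of the argument is the operator $\mathrm{ad}_{\mathrm{i}E_n}:=[\,\cdot\,,\mathrm{i}E_n]$. Decompose any $X\in\mathrm{End}(L(\lambda))\otimes\mathrm{End}(\mathbb{C}^n)$ as $X=X^{\mathrm{bd}}+X^{\mathrm{off}}$, where $X^{\mathrm{bd}}$ is the $\delta_{n-1}$-shaped part (entries in positions $(k,l)$ with $k,l\le n-1$, or $k=l=n$) and $X^{\mathrm{off}}$ collects the remaining entries of the last row and column. The relations $[E_{kn},E_{nn}]=E_{kn}$ and $[E_{nk},E_{nn}]=-E_{nk}$ for $k\ne n$, and $[E_{kl},E_{nn}]=0$ otherwise, show that $\mathrm{ad}_{\mathrm{i}E_n}$ annihilates $X^{\mathrm{bd}}$ and acts on $X^{\mathrm{off}}$ by multiplication by $\pm\mathrm{i}$, hence is an isomorphism there. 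Consequently $[H_{m+1},\mathrm{i}E_n]=R_m$ is solvable precisely when $\delta_{n-1}(R_m)=0$, and then it determines $H_{m+1}^{\mathrm{off}}$ uniquely while leaving $H_{m+1}^{\mathrm{bd}}$ undetermined by this particular equation. This produces a leapfrog recursion: the $z^{-1}$ equation has off-shaped right side, so it imposes no constraint and fixes $H_1^{\mathrm{off}}$; and for each $m\ge1$ the solvability condition $\delta_{n-1}(R_m)=0$ is an equation relating $H_m^{\mathrm{bd}}$ to the already-known $H_m^{\mathrm{off}}$, which fixes $H_m^{\mathrm{bd}}$, after which $[H_{m+1},\mathrm{i}E_n]=R_m$ fixes $H_{m+1}^{\mathrm{off}}$.

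The main obstacle is to verify that $\delta_{n-1}(R_m)=0$ really determines $H_m^{\mathrm{bd}}$ \emph{uniquely}, i.e.\ that no residual freedom remains. A short computation rewrites it as two (noncommutative) Sylvester-type equations for $H_m^{\mathrm{bd}}$: for the upper-left block $\big(\frac{m}{h}+\frac{1}{2\pi\mathrm{i}}\mathrm{ad}_{T^{(n-1)}}\big)\big((H_m^{\mathrm{bd}})^{(n-1)}\big)$ equals a term built from $H_m^{\mathrm{off}}$, and for the $(n,n)$-entry $\big(\frac{m}{h}+\frac{1}{2\pi\mathrm{i}}\mathrm{ad}_{e_{nn}}\big)\big((H_m^{\mathrm{bd}})_{nn}\big)$ equals a known element of $\mathrm{End}(L(\lambda))$. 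Both operators are invertible for every integer $m\ge1$: the eigenvalues of $\mathrm{ad}_{e_{nn}}$ on $\mathrm{End}(L(\lambda))$ are differences of $n$-th components of weights of $L(\lambda)$, and those of $\mathrm{ad}_{T^{(n-1)}}$ on $\mathrm{End}(L(\lambda)\otimes\mathbb{C}^{n-1})$ are differences of the eigenvalues of $T^{(n-1)}=\sum_{i,j\le n-1}e_{ij}\otimes E_{ij}$ (the quantum eigenvalues of the top-left block), which are Gelfand--Tsetlin labels of $L(\lambda)$; since $\lambda$ is a dominant integral weight all of these are integers, so after division by $2\pi\mathrm{i}$ they become purely imaginary, whereas $\frac{m}{h}$ is a nonzero real number because $h\in\mathbb{R}\setminus\{0\}$ --- hence $0$ lies in neither spectrum. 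This closes the induction; each $H_m$ is then built from finitely many matrix multiplications with $T$, $E_n$ and solutions of invertible linear equations over $\mathrm{End}(L(\lambda))$, so $H_m\in\mathrm{End}(L(\lambda))\otimes\mathrm{End}(\mathbb{C}^n)$, and conversely any formal solution of the prescribed shape satisfies the same equations and therefore coincides with $\hat F_h$. As an alternative to the recursion argument, the whole statement follows from the Hukuhara--Turrittin normal form: $\mathrm{i}E_n$ has exactly the two distinct eigenvalues $0$ and $\mathrm{i}$ and \eqref{introqeq} has Poincar\'e rank one, so it admits a formal block-diagonalizing transformation for $\mathbb{C}^n=\mathbb{C}^{n-1}\oplus\mathbb{C}e_n$; the formal monodromy exponent is then exactly the block-diagonal part $\frac{h\delta_{n-1}(T)}{2\pi\mathrm{i}}$ of the residue (rank one leaves the leading residue block unchanged), and the two resulting regular-singular blocks are nonresonant, again because $h$ is real and $\lambda$ is integral dominant.
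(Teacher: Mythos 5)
Your proof is correct and takes essentially the same route as the paper: substitute the ansatz, split the recursion $[H_{m+1},\mathrm{i}E_n]=\big(\tfrac{m}{h}+\tfrac{T}{2\pi\mathrm{i}}\big)H_m-H_m\tfrac{\delta_{n-1}(T)}{2\pi\mathrm{i}}$ into the off-block part, where $\mathrm{ad}_{\mathrm{i}E_n}$ is invertible and fixes $H_{m+1}^{\mathrm{off}}$, and the block-diagonal solvability condition, which the paper writes as the identity \eqref{Hknel} at level $m+1$ and solves by the invertibility of $\tfrac{2\pi\mathrm{i}(m+1)}{h}\mathrm{Id}+\mathrm{ad}_{\delta_{n-1}(T)}$ for real $h$ --- exactly your Sylvester-type argument. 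One cosmetic remark: a finite-dimensional irreducible $\mathfrak{gl}_n$-highest weight $\lambda$ need not be integral, only its consecutive differences are; however, the eigenvalue differences of $\mathrm{ad}_{T^{(n-1)}}$ and $\mathrm{ad}_{e_{nn}}$ that enter your invertibility argument are still integers, so the conclusion is unaffected.
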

\begin{proof}
Plugging $\hat{F}_h$ into the equation \eqref{introqeq} gives rise to the equation for $\hat{H},$
\begin{eqnarray*}
\frac{1}{h}\frac{\mathrm{d}\hat{H}}{\mathrm{d}z}+\hat{H}\cdot \left(\mathrm{i} E_n+
\frac{\delta_{n-1}\left(T\right)}{2\pi\mathrm{i} z}\right)=\left( \mathrm{i} E_n +
\frac{T}{2\pi\mathrm{i} z}\right)\cdot \hat{H}.\end{eqnarray*}
Comparing the coefficients of $z^{-m-1}$, we see that $H_m$ satisfies 
\begin{eqnarray}\label{simHm}
-2\pi \left[H_{m+1}, E_n\right]=\left(\frac{2\pi\mathrm{i} m}{h}+T\right)\cdot H_{m}-H_{m}\cdot   \delta_{n-1}\left(T\right).\end{eqnarray}
Set $\{E_{kl}\}_{1\le k,l\le n}$ the standard basis of ${\rm End}\left(\mathbb{C}\right)$. Then \[T=\sum_{k,l} e_{kl}\otimes E_{kl}, \ \text{ and }\  E_n= 1\otimes E_{nn}.\] Plugging $H_m=\sum_{k,l} H_{m, kl}\otimes E_{kl}$, with $H_{m,kl}\in{\rm End}\left(L\left(\lambda\right)\right)$, into the equation \eqref{simHm} gives rise to 
\begin{eqnarray*}
&&\sum_{k,l}\left(u_l-u_k\right) H_{m+1, kl}\otimes E_{kl}\\
&=&\sum_{k,l}\frac{2\pi \mathrm{i} m}{h}H_{m,kl}\otimes E_{kl}+
\sum_{k,l,l'} e_{kl'} H_{m, l'l} \otimes E_{kl}-\sum_{k,l,l'}   H_{m, kl'} \delta_{n-1}\left(T\right)_{l'l}\otimes E_{kl}.
\end{eqnarray*}
Here $e_{kl}$ is understood as an element in ${\rm End}\left(L\left(\lambda\right)\right)$ via the given representation and $u_i=-2\pi \delta_{n i}$, for $1\leq i \leq n$. That is for $1\le k,l\le n-1$, or $k=l=n$,
\begin{equation}
    \label{Hknel}
0=\frac{2\pi\mathrm{i} m}{h}H_{m,kl}+\sum_{s=1}^n e_{ks} H_{m, sl}- \left(1-\delta_{n l}\right)\sum_{t=1}^{n-1}H_{m, kt} e_{tl}-\delta_{n l}H_{m,k n}e_{n n} \ \in {\rm End}\left(L\left(\lambda\right)\right), 
\end{equation}
and for $1\leq k \leq n-1$,
\begin{align}
&-2\pi H_{m+1,kn}=\frac{2\pi\mathrm{i} m}{h}H_{m,kn}+\sum_{s=1}^n e_{ks} H_{m, sn}- H_{m, kn} e_{nn} \ \in {\rm End}\left(L\left(\lambda\right)\right),\label{Hknel1} \\
&2\pi H_{m+1,nk}=\frac{2\pi\mathrm{i} m}{h}H_{m,nk}+\sum_{s=1}^n e_{ns} H_{m, sk}- \sum_{t=1}^{n-1}H_{m, nt} e_{tk} \ \in {\rm End}\left(L\left(\lambda\right)\right).\label{Hknel2}
\end{align}
To see the above recursive relations have a unique solution, first note that given $H_m$, the identities \eqref{Hknel1} and \eqref{Hknel2} uniquely define the "block off-diagonal" part $H_{m+1, kn}$ and $H_{m+1, nk}$ of $H_{m+1}$ from $H_m$. Furthermore, the identity \eqref{Hknel} (replacing $m$ by $m+1$) is 
\[0=\sum_{1\le k,l\le n-1 } e_{kn} H_{m+1, nl}\otimes E_{kl}+\sum_{t=1}^{n-1}e_{n t}H_{m+1,tn}\otimes E_{n n}+\frac{2\pi\mathrm{i} \left(m+1\right)}{h}\delta_{n-1}\left(H_{m+1}\right)+\left[\delta_{n-1}\left(T\right),\delta_{n-1}\left(H_{m+1}\right)\right],\]
where 
\[\delta_{n-1}\left(H_{m+1}\right)_{ij}=\left\{
          \begin{array}{lr}
             \left(H_{m+1}\right)_{ij},   & \text{if} \ \ 1\le i, j\le n-1, \ \text{or} \ i=j=n  \\
           0, & \text{otherwise}.
             \end{array}
\right.\]
Since $h$ is a real number (actually as long as $h\notin \mathrm{i} \mathbb{Q}$), we have that $2\pi \mathrm{i} \left(m+1\right)/h{\rm Id}+{\rm ad}_{\delta_{n-1}\left(T\right)}$ is invertible on the subspace \[\mathop {\oplus}_{1\le k,l\le n-1, k=l=n} {\rm End}\left(L\left(\lambda\right)\right)\otimes E_{kl}\subset {\rm End}\left(L\left(\lambda\right)\right)\otimes {\rm End}\left(\mathbb{C}^n\right)\] for any nonzero integer $m+1$. Thus, the "block diagonal" part $\{H_{m+1, kl}\}_{1\le k,l\le n-1 , k=l=n}$ of $H_{m+1}$ is uniquely determined from the block off-diagonal part. \end{proof} 

Again, the following proposition follows from the general principal of differential equations with irregular singularities. Here we take the notation of the Stokes sectors from Definition \ref{defSect}.

\begin{proposition}\label{qsol}
For $i\in \mathbb{Z}$, there exists a unique analytic solution $F_{h,i}\left(z;E_n,T\right)$ with the asymptotic expansion
\begin{align*}
    F_{h,i}\left(z;E_n,T\right)\cdot e^{-{h\mathrm{i} E_nz}}z^{-\frac{h\delta_{n-1}\left(T\right)}{2\pi\mathrm{i}}} \sim \hat{H}\left(z\right),\quad z \rightarrow \infty \text{ in $\operatorname{Sect}_i$}.
\end{align*}
\end{proposition}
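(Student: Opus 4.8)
The plan is to read Proposition~\ref{qsol} off the classical resummation theory for meromorphic linear ordinary differential equations, in complete parallel with Proposition~\ref{classical resummation}, after noting that the ``quantum'' nature of \eqref{introqeq} plays no role in the analysis near $z=\infty$. First I would fix a basis of the finite-dimensional space $L(\lambda)$; then \eqref{introqeq} becomes an ordinary linear system of size $n\cdot\dim L(\lambda)$, with a regular singular point at $z=0$ and an irregular singular point of Poincar\'e rank one at $z=\infty$ whose leading coefficient $h\mathrm{i} E_n$ is diagonal with the two eigenvalues $0$ (of multiplicity $(n-1)\dim L(\lambda)$) and $h\mathrm{i}$ (of multiplicity $\dim L(\lambda)$). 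By Proposition~\ref{uniformal} this system has the formal fundamental solution $\hat F_h(z;E_n,T)=\hat H(z)\,\mathrm{e}^{h\mathrm{i} E_n z}z^{h\delta_{n-1}(T)/2\pi\mathrm{i}}$ with $\hat H=\operatorname{Id}+H_1z^{-1}+\cdots$ an honest power series in $z^{-1}$; in particular the Newton polygon at $z=\infty$ has the single slope~$1$. I would then quote the general summability theorem for formal solutions of such systems (see e.g.\ \cite{Balser,Balser0,LR,MR}): having a single slope, equal to $1$, the series $\hat H$ is $1$-summable, so it obeys a Gevrey-$1$ estimate $\|H_m\|\le C^m\, m!$, its formal Borel transform is analytic near the origin, and it continues analytically with at most exponential growth along every direction outside a discrete set of singular directions.

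The second step is to pin down those singular directions and carry out the Borel--Laplace summation. They are the arguments of the nonzero differences of eigenvalues of the leading coefficient $h\mathrm{i} E_n$, i.e.\ of $\pm h\mathrm{i}$; since $h$ is a nonzero \emph{real} number, these are $\pm\tfrac{\pi}{2}$ modulo~$\pi$, which, taking into account the monodromy of the Borel transform around its singular points exactly as in the classical case, are precisely the rays $d_i=i\pi-\tfrac{\pi}{2}$ of Definition~\ref{defSect}. Consequently, on each cone of directions $S(d_i,d_{i+1})$---of opening exactly $\pi=\pi/1$---the Borel transform of $\hat H$ is holomorphic with exponential growth, and the Laplace transform along any direction in $S(d_i,d_{i+1})$ yields a holomorphic solution of \eqref{introqeq} on a half-plane near $\infty$ asymptotic there to $\hat F_h$. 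Letting the direction sweep the whole cone, these Laplace transforms are restrictions of a single holomorphic function (the Borel transform being single-valued on the intermediate sector), and the union of the resulting half-planes is exactly the Stokes sector $\operatorname{Sect}_i=S(d_i-\tfrac{\pi}{2},d_{i+1}+\tfrac{\pi}{2})$ of opening $2\pi$. This produces the solution $F_{h,i}(z;E_n,T)$ with the asserted asymptotic expansion in $\operatorname{Sect}_i$, by the very argument that establishes Proposition~\ref{classical resummation}.

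For uniqueness I would invoke Watson's lemma: since $\operatorname{Sect}_i$ has opening $2\pi>\pi=\pi/1$ and the asymptotics---being forced by the equation---are of Gevrey class~$1$, any holomorphic matrix function on $\operatorname{Sect}_i$ asymptotic to the zero series to all orders with Gevrey-$1$ bounds vanishes identically; hence two solutions of \eqref{introqeq} with the same asymptotics on $\operatorname{Sect}_i$ coincide. I do not expect a genuine obstacle anywhere: the only step that is not bookkeeping is the $1$-summability of $\hat H$ along every non-singular direction, and that is exactly what the cited theorem provides for the rank-one, single-slope system obtained from \eqref{introqeq} after choosing a basis of $L(\lambda)$---so the whole argument reduces verbatim to the classical situation treated in Section~\ref{sec2}, the finite-dimensionality of $L(\lambda)$ being all that is needed to make this reduction legitimate.
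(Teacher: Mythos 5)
Your proposal is correct and takes essentially the same route as the paper: the paper offers no argument beyond invoking the general resummation theory for rank-one irregular singularities (as in Proposition \ref{classical resummation}, citing \cite{Balser0,LR,MR}), and your write-up is precisely that citation spelled out, with the only genuinely "quantum" point---reduction to an ordinary system of size $n\cdot\dim L(\lambda)$ and the identification of the singular directions $d_i$ from the eigenvalue difference $\pm h\mathrm{i}$ of the real-parameter leading term---handled correctly.
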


\begin{definition}
For fixed nonzero real number $h$, the quantum Stokes matrices $S_{h \pm}\left(E_n, T\right)$ of the system {\eqref{introqeq}} are the unique matrices such that
    \begin{itemize}
        \item on the sector $z\in S\left(d_0-\frac{\pi}{2},d_0+\frac{\pi}{2}\right)$, we have $F_{h,-1}\left(z;E_n,T\right)\cdot \mathrm{e}^{\frac{h\delta_{n-1}\left(T\right)}{2}} S_{h+}\left(E_n, T\right) =F_{h,0}\left(z;E_n,T\right)$, and
        \item on the sector $z\in S\left(d_{1}-\frac{\pi}{2},d_{1}+\frac{\pi}{2}\right)$, we have $F_{h,0}\left(z;E_n,T\right)\cdot S_{h-}\left(E_n, T\right)\mathrm{e}^{-\frac{h\delta_{n-1}\left(T\right)}{2}}=F_{h,-1}\left(z\mathrm{e}^{-2 \pi \mathrm{i} };E_n,T\right)$.
    \end{itemize}
\end{definition}

\subsection{Quantum minors and Gelfand-Tsetlin basis}\label{sec:qminor}

In this subsection, we recall quantum minors and Gelfand-Tsetlin basis. See more discussions in \cite{GT, Molev89, Molev}. In particular, the following propositions can be found in \cite{Molev89, Molev}. 


\begin{definition}\label{qminor}
For an indeterminate $x$ commuting with all $e_{ij}$, $1\leq i,j \leq n$, let us introduce the $n\times n$ matrix $T\left(x\right)=T-x \operatorname{Id}$. For $1\leq m \leq n$, given two sequences, $a=\{a_1,\cdots,a_m\}$ and $b=\{b_1,\cdots,b_m\}$, with elements in $\{1,2,\cdots,n-1,n\}$, the corresponding quantum
minor of the matrix $T\left(x\right)$ is defined by 
\begin{align*}
\Delta^{a_1,...,a_m}_{b_1,...,b_m}\left(T\left(x\right)\right):=\sum_{\sigma\in S_m}\left(-1\right)^\sigma T_{a_{\sigma\left(1\right)},b_1}\left(x_1\right)\cdots T_{a_{\sigma\left(m\right)},b_m}\left(x_m\right)\in U\left({\mathfrak {gl}}_n\right)[x],
\end{align*}
where $x_k:=x+k-1$ for $k=1,...,m$, and $\left(-1\right)^{\sigma}$ means the signature of the permutation $\sigma$ in the symmetry group $S_m$ of $m$ elements.
\end{definition}

\begin{proposition}\label{Laplace expansion}
Given two sequences $a=\{a_1,\cdots,a_m\}$ and $b=\{b_1,\cdots,b_m\}$, 
we have the following Laplace expansion formulas
\begin{align*}
&\Delta^{a_1,...,a_m}_{b_1,...,b_m}\left(T\left(x\right)\right)=\sum_{k=1}^{m}\left(-1\right)^{k+j} T_{a_{k} b_{j}}\left(x_{1}\right) \Delta_{b_1,...,\widehat{b_j},...,b_m}^{a_1,...,\widehat{a_k},...,a_m}\left(T\left(x_{2}\right)\right), \quad 1\leq j \leq m,\\
&\Delta^{a_1,...,a_m}_{b_1,...,b_m}\left(T\left(x\right)\right)=\sum_{k=1}^{m}\left(-1\right)^{k+j} \Delta_{b_1,...,\widehat{b_j},...,b_m}^{a_1,...,\widehat{a_k},...,a_m}\left(T\left(x_{1}\right)\right) T_{a_{k} b_{j}}\left(x_{m}\right), \quad 1\leq j \leq m,\\
&\Delta^{a_1,...,a_m}_{b_1,...,b_m}\left(T\left(x\right)\right)=\sum_{j=1}^{m}\left(-1\right)^{k+j} T_{a_{k} b_{j}}\left(x_{m}\right) \Delta_{b_1,...,\widehat{b_j},...,b_m}^{a_1,...,\widehat{a_k},...,a_m}\left(T\left(x_{1}\right)\right), \quad 1\leq k \leq m, \\
&\Delta^{a_1,...,a_m}_{b_1,...,b_m}\left(T\left(x\right)\right)=\sum_{j=1}^{m}\left(-1\right)^{k+j} \Delta_{b_1,...,\widehat{b_j},...,b_m}^{a_1,...,\widehat{a_k},...,a_m}\left(T\left(x_{2}\right)\right) T_{a_{k} b_{j}}\left(x_{1}\right), \quad 1\leq k \leq m.
\end{align*}
Here $\Delta^{a_1,...,\widehat{a_k},..., a_m}_{b_1,...,\widehat{b_j},...,b_{m}}\left(T\left(x\right)\right)$ is the $\left(m-1\right)\times \left(m-1\right)$ quantum minor of the matrix $T\left(x\right)$ (the $\widehat{a_k}$ or $\widehat{b_j}$ means that the row or column index $a_k$ or $b_j$ is omitted).
\end{proposition}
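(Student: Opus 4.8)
The plan is to derive all four expansions from one structural fact about the quantum minor of Definition~\ref{qminor}: besides the \emph{row-antisymmetrized} expression used in the definition, it admits a dual \emph{column-antisymmetrized} expression
\[
\Delta^{a_1,\dots,a_m}_{b_1,\dots,b_m}(T(x))=\sum_{\sigma\in S_m}(-1)^{\sigma}\,T_{a_1,b_{\sigma(1)}}(x_m)\,T_{a_2,b_{\sigma(2)}}(x_{m-1})\cdots T_{a_m,b_{\sigma(m)}}(x_1),
\]
and that, in either form, it is skew-symmetric under permutations of the upper indices $a_1,\dots,a_m$ and, separately, under permutations of the lower indices $b_1,\dots,b_m$. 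These are standard properties of quantum minors over $U(\mathfrak{gl}_n)$---they follow from the defining commutation relations of $U(\mathfrak{gl}_n)$, equivalently from the $RTT$-presentation, and are recorded in \cite{Molev89,Molev}; I would recall them first. Granting them, each Laplace identity is proved by moving the factor that carries the distinguished column (or row) index to one end of the product and regrouping the remaining permutations into a smaller quantum minor.

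Concretely, for the first identity---expansion along the column $b_j$ with the distinguished factor extracted on the left---I would first use skew-symmetry in the lower indices to bring $b_j$ into the first slot, at the cost of the sign $(-1)^{j-1}$, so that $\Delta^{a_1,\dots,a_m}_{b_1,\dots,b_m}(T(x))=(-1)^{j-1}\Delta^{a_1,\dots,a_m}_{b_j,b_1,\dots,\widehat{b_j},\dots,b_m}(T(x))$. Expanding the right-hand side by the row-antisymmetrized definition and splitting the sum over $S_m$ according to the value $k=\sigma(1)$: the leftmost factor is $T_{a_k,b_j}(x_1)$; the sign of $\sigma$ relative to the induced permutation of the remaining $m-1$ slots contributes $(-1)^{k-1}$; and, since the first slot is now consumed, the surviving product runs over the $m-1$ factors carrying the shifts $x_2,\dots,x_m$ and is therefore exactly $\Delta^{a_1,\dots,\widehat{a_k},\dots,a_m}_{b_1,\dots,\widehat{b_j},\dots,b_m}(T(x_2))$, because $(x_2)_p=x_2+p-1=x_{p+1}$. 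Multiplying the signs, $(-1)^{j-1}(-1)^{k-1}=(-1)^{k+j}$, which is the first formula.

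The remaining three formulas come out of the same mechanism with the obvious variations. For the second, bring $b_j$ to the last slot (sign $(-1)^{m-j}$) and split by $k=\sigma(m)$, so that $T_{a_k,b_j}$ is extracted on the right with the large shift $x_m$ while the complementary minor acquires the parameter $x_1$. For the third and fourth, start instead from the column-antisymmetrized expression and bring the distinguished row $a_k$ to the first, respectively last, slot, then split according to the column index occurring there; because the row shifts in the column-antisymmetrized form run in the reverse order $x_m,x_{m-1},\dots,x_1$, the extracted factor carries the shift $x_m$ in the third formula and $x_1$ in the fourth, matching the statement, and in each case the complementary $(m-1)\times(m-1)$ minor has parameter $x_1$ (third) or $x_2$ (fourth).

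I expect the main difficulty to be bookkeeping rather than conceptual. Because the $T_{ij}$ do not commute, none of the reorderings above may be carried out by naively transposing adjacent factors; every rearrangement must be routed through the skew-symmetry identities for quantum minors, whose proof in turn rests on the $U(\mathfrak{gl}_n)$ relations. The second delicate point is to check that, after deleting the consumed slot, the surviving shifts genuinely reproduce the shift pattern $(y)_1,(y)_2,\dots,(y)_{m-1}$ of an $(m-1)\times(m-1)$ quantum minor with the correct parameter $y$ ($y=x_2$ in the first and fourth formulas, $y=x_1$ in the second and third); an off-by-one here is the easiest slip to make, and I would guard against it by proving the skew-symmetry properties and the expansion formulas together by induction on $m$.
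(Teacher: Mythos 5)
Your argument is sound, but note that the paper does not prove this proposition at all: it is stated as a recalled result with a pointer to \cite{Molev89,Molev}, so there is no in-paper proof to compare against, and your sketch is essentially the standard derivation those references contain. You reduce the four expansions to two structural inputs---the column-antisymmetrized expression with reversed shifts $x_m,\dots,x_1$, and skew-symmetry of the quantum minor separately in its upper and lower indices---and your bookkeeping checks out: moving $b_j$ (resp.\ $a_k$) to the first or last slot costs $(-1)^{j-1}$ or $(-1)^{m-j}$ (resp.\ $(-1)^{k-1}$ or $(-1)^{m-k}$), splitting the sum at the extreme slot contributes the complementary sign so the total is $(-1)^{k+j}$, and the shift identifications $(x_2)_p=x_{p+1}$ and $(x_1)_p=x_p$ make the surviving product exactly the $(m-1)\times(m-1)$ minor with parameter $x_2$ (first and fourth formulas) or $x_1$ (second and third), with no commutation of the extracted factor ever needed since it already sits at the end of the product. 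The honest caveat, which you acknowledge, is that all the noncommutative content is concentrated in the two inputs: skew-symmetry in the row indices is immediate from Definition \ref{qminor}, but skew-symmetry in the column indices and the reversed-shift column form genuinely use the $U(\mathfrak{gl}_n)$ relations (they are easily verified for $m=2$ and proved in general by induction or via the $RTT$/antisymmetrizer description). Since you defer exactly those facts to the same sources the paper cites for the whole proposition, your proof is not fully self-contained, but it is a correct and more explicit reduction than the paper provides; carrying out the induction on $m$ you propose would close it completely.
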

\begin{proposition}{\label{2}}
The element $e_{ij}$ and quantum minor $\Delta^{a_1,...,a_m}_{b_1,...,b_m}\left(T\left(x\right)\right)$ have the following relation
\begin{align*}
    \left[e_{i j}, \Delta^{a_1,...,a_m}_{b_1,...,b_m}\left(T\left(x\right)\right)\right]=\sum_{r=1}^{m}\left(\delta_{j a_{r}} \Delta^{a_1,...,i...,a_m}_{b_1,...,b_m}\left(T\left(x\right)\right)-\delta_{i b_{r}}\Delta^{a_1,...,a_m}_{b_1,...,j...,b_m}\left(T\left(x\right)\right) \right),
\end{align*}
where $i$ and $j$ on the right hand side take the $r$-th position. As a result, $\Delta^{1,...,n}_{1,...,n}\left(T\left(x\right)\right)$ commutes with every $e_{ij}$, for $1\leq i,j \leq n$.
\end{proposition}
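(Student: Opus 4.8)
The plan is to reduce the statement to the defining commutation relation of $U(\mathfrak{gl}_n)$ together with the fact that $\mathrm{ad}_{e_{ij}}=[e_{ij},-\,]$ is a derivation of the associative algebra $U(\mathfrak{gl}_n)$. The first ingredient is the elementary identity for the entries of $T(x)=T-x\,\mathrm{Id}$: since $T_{kl}(x)=e_{kl}-x\,\delta_{kl}$ and the scalar part is central,
\[
[e_{ij},T_{kl}(x)]=[e_{ij},e_{kl}]=\delta_{jk}e_{il}-\delta_{li}e_{kj}=\delta_{jk}T_{il}(x)-\delta_{li}T_{kj}(x),
\]
where the last equality uses $\delta_{jk}\delta_{il}=\delta_{li}\delta_{kj}$, so the two spurious scalar terms cancel. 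It is important that this holds with the \emph{same} spectral parameter on both sides.

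Next I would apply $\mathrm{ad}_{e_{ij}}$ to the defining expansion
\[
\Delta^{a_1,\dots,a_m}_{b_1,\dots,b_m}(T(x))=\sum_{\sigma\in S_m}(-1)^\sigma\,T_{a_{\sigma(1)}b_1}(x_1)\cdots T_{a_{\sigma(m)}b_m}(x_m).
\]
By the derivation property this equals $\sum_{\sigma}\sum_{s=1}^m(-1)^\sigma\,T_{a_{\sigma(1)}b_1}(x_1)\cdots\big[e_{ij},T_{a_{\sigma(s)}b_s}(x_s)\big]\cdots T_{a_{\sigma(m)}b_m}(x_m)$, and by the identity above the inner commutator splits as $\delta_{j a_{\sigma(s)}}\,T_{i b_s}(x_s)-\delta_{i b_s}\,T_{a_{\sigma(s)}j}(x_s)$. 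So the whole expression is a sum of products each obtained from the generic product in the minor's expansion by modifying a single factor in place.

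Now comes the reorganization, which is the only delicate point. Collect the terms carrying $\delta_{j a_{\sigma(s)}}$. Writing $r=\sigma(s)$ (so $\delta_{j a_{\sigma(s)}}=\delta_{j a_r}$, and $s=\sigma^{-1}(r)$ is then determined by $\sigma$ and $r$), such a term equals $\delta_{j a_r}(-1)^\sigma$ times the product obtained from $T_{a_{\sigma(1)}b_1}(x_1)\cdots T_{a_{\sigma(m)}b_m}(x_m)$ by replacing the $r$-th row label $a_r$ (which equals $j$ on the support of the delta) by $i$, while the column labels $b_1,\dots,b_m$ and the shifts $x_1,\dots,x_m$ are untouched. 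Summing over all $\sigma\in S_m$ (with $s=\sigma^{-1}(r)$) reassembles, with the matching sign $(-1)^\sigma$, exactly the quantum minor with $a_r$ replaced by $i$; hence these terms sum to $\sum_{r=1}^m\delta_{j a_r}\,\Delta^{a_1,\dots,i,\dots,a_m}_{b_1,\dots,b_m}(T(x))$ with $i$ in the $r$-th upper slot. Symmetrically, the terms with $\delta_{i b_s}$ — here the modified position is $r:=s$ in the \emph{column} list — sum to $-\sum_{r=1}^m\delta_{i b_r}\,\Delta^{a_1,\dots,a_m}_{b_1,\dots,j,\dots,b_m}(T(x))$ with $j$ in the $r$-th lower slot. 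This gives the asserted formula. I expect this permutation-sum bookkeeping to be the main (though routine) obstacle: one must check that, after imposing $\sigma(s)=r$ and relabelling one index, the remaining factors still constitute a genuine quantum minor carrying the correct $(-1)^\sigma$; organizing the sum by the modified position $r$ rather than by the pair $(\sigma,s)$ makes this transparent, and it is essential that the substitution leaves the column order and its attached shifts $x_s$ intact.

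For the corollary, specialize $a_r=b_r=r$ for $1\le r\le n$, so $\delta_{j a_r}=\delta_{jr}$ and $\delta_{i b_r}=\delta_{ir}$, giving
\[
[e_{ij},\Delta^{1,\dots,n}_{1,\dots,n}(T(x))]=\Delta^{1,\dots,i,\dots,n}_{1,\dots,n}(T(x))-\Delta^{1,\dots,n}_{1,\dots,j,\dots,n}(T(x)),
\]
where $i$ occupies the $j$-th upper slot in the first term and $j$ the $i$-th lower slot in the second. If $i=j$ both terms equal $\Delta^{1,\dots,n}_{1,\dots,n}(T(x))$ and cancel; if $i\ne j$ the first minor has the upper index $i$ repeated (in slots $i$ and $j$) and the second has the lower index $j$ repeated, so each vanishes by the skew-symmetry of quantum minors in their row, resp.\ column, indices. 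Hence $\Delta^{1,\dots,n}_{1,\dots,n}(T(x))$ is central. I would invoke this skew-symmetry as a standard property of quantum minors; for a self-contained treatment, skew-symmetry in the upper indices is immediate from the defining sum by composing $\sigma$ with a transposition, and skew-symmetry in the lower indices is the analogous known identity involving the shifted parameters.
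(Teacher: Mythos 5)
Your proof is correct. There is nothing in the paper to compare it with: the paper does not prove Proposition \ref{2} at all, but quotes it (together with the surrounding statements on quantum minors) from \cite{Molev89, Molev}. Your argument is the standard direct verification and it goes through: the entrywise identity $[e_{ij},T_{kl}(x)]=\delta_{jk}T_{il}(x)-\delta_{li}T_{kj}(x)$ holds with the same parameter because the scalar terms cancel; applying the Leibniz rule for $\mathrm{ad}_{e_{ij}}$ factorwise to the defining sum and regrouping the $\delta_{j a_{\sigma(s)}}$-terms by $r=\sigma(s)$ (row substitution in place, columns and shifts untouched) and the $\delta_{i b_s}$-terms by $r=s$ (column substitution in place, shift $x_r$ untouched) reassembles exactly the modified minors with the sign $(-1)^\sigma$ intact; one can confirm this against the $m=1,2$ cases by hand. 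The only ingredient you invoke without proof is the skew-symmetry of quantum minors in the \emph{lower} indices (equivalently, the vanishing of a minor with a repeated column index), which you need to kill the second term of the corollary when $i\neq j$; unlike the upper-index case it is not immediate from the defining sum, but it is a true and standard fact precisely because of the shifts $x_k=x+k-1$ (for instance $\Delta^{1,2}_{1,1}(T(x))=[e_{11},e_{21}]+e_{21}=0$), it is proved in \cite{Molev}, and it can also be obtained by induction from the Laplace expansions of Proposition \ref{Laplace expansion}. Since the paper itself defers this entire circle of facts to Molev, relying on that identity is legitimate; it is simply the one place where your otherwise self-contained computation leans on an external lemma.
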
 
Now let us introduce the Gelfand-Tsetlin basis and the actions of the quantum minors on them. The representation $L\left(\lambda\right)$ has a set of orthogonal basis vectors called the Gelfand–Tsetlin basis vectors parameterized by the Gelfand–Tsetlin patterns. See \cite{Molev} for more details. Such a pattern $\Lambda$ is a collection of numbers $\{\lambda^{\left(i\right)}_j\left(\Lambda\right)\}_{1\le j\le i\le n}$, with the $n$-tuples $\left(\lambda^{\left(n\right)}_{1}\left(\Lambda\right),\cdots,\lambda^{\left(n\right)}_{n}\left(\Lambda\right)\right)$ coinciding with $\lambda$ and satisfies the interlacing conditions
\begin{equation*}
\lambda_j^{\left(i\right)}\left(\Lambda\right)-\lambda^{\left(i-1\right)}_j\left(\Lambda\right)\in\mathbb{Z}_{\ge 0}, \hspace{5mm} \lambda^{\left(i-1\right)}_j\left(\Lambda\right)-\lambda^{\left(i\right)}_{j+1}\left(\Lambda\right)\in\mathbb{Z}_{\ge 0}, \hspace{5mm} \forall \ i=2,...,n.
\end{equation*}
The Gelfand–Tsetlin basis vector corresponding to the pattern $\Lambda$ is denoted as $\xi_{\Lambda}$. 

\begin{proposition}[see e.g., \cite{Molev89}]{\label{3}}
The actions of $e_{kk}$, $e_{k,k+1}$ and $e_{k+1,k}$ on the Gelfand-Tsetlin basis vector $\xi_{\Lambda}$ of the representation $L\left(\lambda\right)$ are given by the formulas
\begin{align*}
&e_{k k} \xi_{\Lambda} =\left(\sum_{j=1}^{k} \lambda^{\left(k\right)}_ j\left(\Lambda\right)-\sum_{j=1}^{k-1} \lambda^{\left(k-1\right)}_ j\left(\Lambda\right)\right) \xi_{\Lambda},\quad 1\leq k \leq n, \\
&e_{k, k+1} \xi_{\Lambda} =\sum_{j=1}^{k}\sqrt{-\frac{\prod_{i=1}^{k+1}\left(l_{k j}\left(\Lambda\right)-l_{k+1,i}\left(\Lambda\right)\right)\prod_{i=1}^{k-1}\left(l_{k j}\left(\Lambda\right)+1-l_{k-1,i}\left(\Lambda\right)\right)}{\prod_{i=1,i\neq j}^{k}\left(l_{k j}\left(\Lambda\right)-l_{k i}\left(\Lambda\right)\right)\prod_{i=1,i\neq j}^k \left(l_{k j}\left(\Lambda\right)+1-l_{k i}\left(\Lambda\right)\right)}}  \xi_{\Lambda+\delta_{k j}},\quad 1\leq k \leq n-1, \\
&e_{k+1, k} \xi_{\Lambda} =\sum_{j=1}^{k} \sqrt{-\frac{\prod_{i=1}^{k-1}\left(l_{k j}\left(\Lambda\right)-l_{k-1,i}\left(\Lambda\right)\right)\prod_{i=1}^{k+1}\left(l_{k j}\left(\Lambda\right)-1-l_{k+1,i}\left(\Lambda\right)\right)}{\prod_{i=1,i\neq j}^{k}\left(l_{k j}\left(\Lambda\right)-l_{k i}\left(\Lambda\right)\right)\prod_{i=1,i\neq j}^k\left(l_{k j}\left(\Lambda\right)-1-l_{k i}\left(\Lambda\right)\right)}}  \xi_{\Lambda-\delta_{k j}},\quad 1\leq k \leq n-1,
\end{align*}
where $l_{k j}\left(\Lambda\right)=\lambda^{\left(k\right)}_ j\left(\Lambda\right)-j+1$ are functions on the set of patterns, and $\Lambda \pm \delta_{k j}$ is a pattern replacing $\lambda^{\left(k\right)}_ j\left(\Lambda\right)$ of the pattern $\Lambda$ by $\lambda^{\left(k\right)}_ j\left(\Lambda\right)\pm 1$. If $\Lambda \pm \delta_{k j}$ is not a Gelfand–Tsetlin pattern, the vector $\xi_{\Lambda\pm \delta_{k j}}$ is thought as zero. 
\end{proposition}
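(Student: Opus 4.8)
The statement is the classical Gelfand--Tsetlin branching theorem, so the plan is to reconstruct its standard derivation in three stages --- build the basis, settle the diagonal action, then pin down the off-diagonal coefficients --- using an induction on $n$ to localize the real work. First I would set up the basis along the chain $\mathfrak{gl}_1\subset\mathfrak{gl}_2\subset\cdots\subset\mathfrak{gl}_n$: for each $k$ the restriction of any finite-dimensional irreducible $\mathfrak{gl}_k$-module to $\mathfrak{gl}_{k-1}$ is multiplicity free, its constituents being indexed by exactly the weights interlacing $\lambda^{(k)}$. Iterating, each Gelfand--Tsetlin pattern $\Lambda$ selects a line in $L(\lambda)$, and $\xi_\Lambda$ is a generator of that line of unit length for the contravariant Hermitian form (so $\xi_\Lambda$ is determined up to a phase, fixed once and for all); equivalently $\xi_\Lambda$ is the joint eigenvector of the Gelfand--Tsetlin commutative subalgebra generated by the centres $Z(U(\mathfrak{gl}_k))$, $1\le k\le n$, with the pattern entries read off from the eigenvalues of the Gelfand invariants. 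The diagonal formula is then immediate: the trace $C^{(k)}:=\sum_{j=1}^{k}e_{jj}$ is central in $U(\mathfrak{gl}_k)$, hence acts on the $\mathfrak{gl}_k$-submodule containing $\xi_\Lambda$ by the scalar $\sum_{j=1}^{k}\lambda^{(k)}_j(\Lambda)$, and $e_{kk}=C^{(k)}-C^{(k-1)}$ gives the first displayed identity.

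Next, for the raising and lowering generators, I would first pin down the \emph{shape} of the answer by a structural argument. Both $e_{k,k+1}$ and $e_{k+1,k}$ lie in $U(\mathfrak{gl}_{k+1})$ and commute with $\mathfrak{gl}_{k-1}$, hence preserve every row of a pattern except possibly row $k$; since they change the $\mathfrak{gl}_k$-weight of a weight vector by $\pm\epsilon_k$, they can only raise or lower a single entry $\lambda^{(k)}_j$ by one. Hence
\begin{equation*}
e_{k,k+1}\,\xi_\Lambda=\sum_{j=1}^{k}c_{kj}(\Lambda)\,\xi_{\Lambda+\delta_{kj}},\qquad e_{k+1,k}\,\xi_\Lambda=\sum_{j=1}^{k}c'_{kj}(\Lambda)\,\xi_{\Lambda-\delta_{kj}},
\end{equation*}
with $\xi_{\Lambda\pm\delta_{kj}}$ interpreted as $0$ when the pattern is illegal. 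Moreover, applying Schur's lemma on each $\mathfrak{gl}_{k-1}$-isotypic component inside a fixed irreducible $\mathfrak{gl}_{k+1}$-submodule shows that $c_{kj}(\Lambda)$ and $c'_{kj}(\Lambda)$ depend only on the three consecutive rows $\lambda^{(k-1)},\lambda^{(k)},\lambda^{(k+1)}$. Combined with an induction on $n$ --- in which every generator $e_{k,k+1}$ with $k\le n-2$ acts inside $\mathfrak{gl}_{n-1}$-submodules and is handled by the inductive hypothesis, since the restriction of the Gelfand--Tsetlin basis of $L(\lambda)$ to such a submodule is again a Gelfand--Tsetlin basis --- this reduces the whole computation to evaluating $c_{n-1,j}$ and $c'_{n-1,j}$, i.e. effectively to the three-row ($\mathfrak{gl}_3$) situation.

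Finally, I would determine these scalars in the three-row case. One option is the direct route: list the short set of three-row patterns, fix an explicit model of the relevant $\mathfrak{gl}_3$-module, and compute; the other, which is the one used in the literature, realizes $\xi_\Lambda$ as a suitably normalized product of Mickelsson--Zhelobenko lowering operators applied to the highest weight vector and reads the coefficients off from the commutation relations of those operators with the $e_{ij}$. In either case the residual scaling freedom is eliminated by two inputs: the contravariant form makes $e_{k+1,k}$ the adjoint of $e_{k,k+1}$, so that $c'_{kj}(\Lambda)=\overline{c_{kj}(\Lambda-\delta_{kj})}$; and the $\mathfrak{sl}_2$-relation $[e_{k,k+1},e_{k+1,k}]=e_{kk}-e_{k+1,k+1}$, evaluated on $\xi_\Lambda$ and combined with the diagonal formula of the first step, produces a recursion for the quantities $|c_{kj}(\Lambda)|^2$ along the entries of row $k$; together with the positivity $|c_{kj}(\Lambda)|^2\ge 0$ and the vanishing of $c_{kj}$ at the boundary of the interlacing polytope, this forces $|c_{kj}(\Lambda)|^2$ to be exactly the rational function appearing under the square roots in the proposition. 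The square roots are precisely the factors converting matrix entries in an unnormalized monomial basis into entries in the orthonormal Gelfand--Tsetlin basis, and the sign inside each root is a convention. I expect the main obstacle to be this last stage: proving the three-row locality cleanly and then carrying the $\mathfrak{gl}_3$ (or lowering-operator) bookkeeping through consistently with the fixed phases and with the boundary behaviour of the patterns --- everything preceding it is soft representation theory.
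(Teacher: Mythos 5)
First, note what the paper actually does with this statement: Proposition \ref{3} is quoted as a classical result (``see e.g.\ \cite{Molev89}''), and the only justification given is the remark that follows it, namely that the formulas are obtained from \cite[Theorem 2.3]{Molev} together with \cite[Proposition 2.4]{Molev} by passing to the normalized basis $\xi_\Lambda=\eta_\Lambda/\left|\eta_\Lambda\right|$. So any self-contained derivation, such as yours, is by construction a different route from the paper's, which is a citation plus a renormalization. Your overall architecture (multiplicity-free branching, the Gelfand--Tsetlin subalgebra, $e_{kk}=C^{(k)}-C^{(k-1)}$ for the diagonal action, the support argument showing $e_{k,k+1}\xi_\Lambda$ is a combination of $\xi_{\Lambda+\delta_{kj}}$, three-row locality via commutation with $\mathfrak{gl}_{k-1}$, and adjointness $c'_{kj}(\Lambda)=\overline{c_{kj}(\Lambda-\delta_{kj})}$) is the standard one and these soft steps are essentially sound.

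The genuine gap is in the final, quantitative step. The relation $[e_{k,k+1},e_{k+1,k}]=e_{kk}-e_{k+1,k+1}$ evaluated on $\xi_\Lambda$, combined with adjointness, yields for each pattern only the single scalar identity $\sum_{j}\left|c_{kj}(\Lambda-\delta_{kj})\right|^2-\sum_{j}\left|c_{kj}(\Lambda)\right|^2=\bigl(\sum_j\lambda^{(k)}_j-\sum_j\lambda^{(k-1)}_j\bigr)-\bigl(\sum_j\lambda^{(k+1)}_j-\sum_j\lambda^{(k)}_j\bigr)$, i.e.\ it constrains only sums over $j$. Already in the three-row ($\mathfrak{gl}_3$, $k=2$) case to which you reduce, there are two unknown functions $\left|c_{21}\right|^2,\left|c_{22}\right|^2$ of the pattern, and starting from the top of the interlacing polytope the recursion only ever relates values at \emph{different} patterns through their sum; positivity and boundary vanishing do not separate the individual coefficients. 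So the assertion that these inputs ``force'' the explicit rational functions under the square roots is unjustified as stated; the hard content of the proposition lives exactly there. To close it you need additional structure --- the Mickelsson--Zhelobenko lowering-operator calculus you allude to (this is precisely what \cite[Theorem 2.3, Proposition 2.4]{Molev} encapsulate), or the joint eigenvalues of the full Gelfand--Tsetlin subalgebra (the quantum-minor action recorded just after Proposition \ref{3} in the paper), or an explicit model computation for the three-row case, which you mention but do not carry out. A smaller gloss of the same kind: the claim that the phases of the $\xi_\Lambda$ can be chosen so that \emph{all} coefficients are simultaneously given by the nonnegative square roots also needs an argument (a connectedness/tree argument on patterns), not just ``fixed once and for all.''
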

\begin{rmk}
    The orthogonal Gelfand-Tsetlin basis $\{\xi_{\Lambda}\}$ in \cite{Molev89} is different from the orthogonal one in \cite{Molev}. If we write the orthogonal one in \cite{Molev} as $\{\eta_{\Lambda}\}$, then $\xi_{\Lambda}=\eta_{\Lambda}/\left|\eta_{\Lambda}\right|$, where $\left|\eta_{\Lambda}\right|$ is the norm of $\eta_{\Lambda}$ introduced in \cite[Proposition 2.4]{Molev}. Here we can derive Proposition \ref{3} from \cite[Theorem 2.3]{Molev} and \cite[Proposition 2.4]{Molev}.
\end{rmk}

\begin{definition}\label{ell}
For all $1\leq j \leq m\leq n$,  let $\ell^{\left(m\right)}_j$ be the endomorphism on $L\left(\lambda\right)$ defined by \[\ell^{\left(m\right)}_j\left(\xi_{\Lambda}\right)=l_{m j}\left(\Lambda\right)\xi_{\Lambda}\] for all basis elements $\xi_\Lambda$.
\end{definition}
\begin{proposition}
For all $1\leq m \leq n$, the action of $\Delta^{1,...,m}_{1,...,m}\left(T\left(x\right)\right)$ on the Gelfand-Tsetlin basis vector $\xi_{\Lambda}$ is
\begin{align*}
\Delta^{1,...,m}_{1,...,m}\left(T\left(x\right)\right)\xi_{\Lambda} =\left(\ell_1^{\left(m\right)}-x\right) \cdots\left(\ell_{m}^{\left( m\right)}-x\right) \xi_{\Lambda}.
\end{align*}
\end{proposition}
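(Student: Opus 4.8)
The plan is to prove the eigenvalue formula $\Delta^{1,\ldots,m}_{1,\ldots,m}(T(x))\xi_\Lambda = (\ell_1^{(m)}-x)\cdots(\ell_m^{(m)}-x)\xi_\Lambda$ by induction on $m$, using the Laplace expansion from Proposition \ref{Laplace expansion} to reduce the $m\times m$ quantum minor to a combination of $(m-1)\times (m-1)$ quantum minors involving only the first $m-1$ rows and columns, together with the action formulas for $e_{kk}$, $e_{k,k+1}$, $e_{k+1,k}$ on the Gelfand--Tsetlin basis in Proposition \ref{3}. The key structural fact is that $\Delta^{1,\ldots,m}_{1,\ldots,m}(T(x))$ is the quantum analogue of a ``leading principal minor'' and behaves compatibly with the Gelfand--Tsetlin chain $\gl_1\subset \gl_2\subset\cdots\subset\gl_n$; in particular it lies in the centre of $U(\gl_m)\subset U(\gl_n)$ by the last assertion of Proposition \ref{2}, so it acts as a scalar on each $\gl_m$-isotypic piece, and that scalar is determined by evaluation on a highest-weight-type vector.

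First I would set up the base case $m=1$: here $\Delta^1_1(T(x)) = T_{11}(x) = e_{11}-x$, and $e_{11}\xi_\Lambda = \lambda_1^{(1)}(\Lambda)\xi_\Lambda = l_{11}(\Lambda)\xi_\Lambda = \ell_1^{(1)}\xi_\Lambda$, which matches. For the inductive step, I would use the centrality of $\Delta^{1,\ldots,m}_{1,\ldots,m}(T(x))$ in $U(\gl_m)$: restricting $L(\lambda)$ to $\gl_m$ and decomposing into irreducibles, the Gelfand--Tsetlin pattern $\Lambda$ determines the $\gl_m$-highest weight $(\lambda_1^{(m)}(\Lambda),\ldots,\lambda_m^{(m)}(\Lambda))$ of the $\gl_m$-subrepresentation containing $\xi_\Lambda$, so it suffices to compute the scalar on any one convenient vector in that subrepresentation --- say the $\gl_m$-highest weight vector, or alternatively to apply the Laplace expansion along the last row/column and track the diagonal terms. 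Concretely, using the expansion $\Delta^{1,\ldots,m}_{1,\ldots,m}(T(x)) = \sum_{k=1}^m (-1)^{k+m} T_{km}(x_m)\Delta^{1,\ldots,\widehat{k},\ldots,m}_{1,\ldots,m-1}(T(x_1))$ and the known commutation relations between the $T_{km}$ and the smaller quantum minors (Proposition \ref{2}), one reduces to the action of $\Delta^{1,\ldots,m-1}_{1,\ldots,m-1}(T(x_1))$ on $\xi_\Lambda$ and its neighbours $\xi_{\Lambda\pm\delta_{m-1,j}}$, for which the inductive hypothesis applies with argument $x_1 = x+1$. Combining with the explicit coefficients from Proposition \ref{3} and simplifying the resulting sum of products of $l_{k j}(\Lambda)$'s --- which telescopes because of the interlacing conditions --- yields $\prod_{j=1}^m(\ell_j^{(m)}-x)$.

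An alternative and cleaner route, which I would actually prefer to write up, is to avoid the messy Laplace bookkeeping entirely: since $\Delta^{1,\ldots,m}_{1,\ldots,m}(T(x))$ is central in $U(\gl_m)$ and the Gelfand--Tsetlin vector $\xi_\Lambda$ generates (under $\gl_m$) an irreducible $\gl_m$-module with highest weight $\mu = (\lambda_1^{(m)}(\Lambda),\ldots,\lambda_m^{(m)}(\Lambda))$, I would evaluate the central element on the $\gl_m$-highest weight vector $v_\mu$ of that module. On $v_\mu$ all strictly-upper-triangular $e_{ij}$ ($i<j\le m$) act by zero, so only the identity permutation survives in the defining sum, giving $\Delta^{1,\ldots,m}_{1,\ldots,m}(T(x))v_\mu = \prod_{k=1}^m(e_{kk}-x+k-1)v_\mu = \prod_{k=1}^m(\mu_k - x + k - 1)v_\mu = \prod_{k=1}^m(\ell_k^{(m)}(\mu) - x)v_\mu$, where I have used $e_{kk}v_\mu = \mu_k v_\mu$ and $\ell_k^{(m)} = \lambda_k^{(m)} - k + 1$ in the notation of Definition \ref{ell}. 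Since $\ell_k^{(m)}$ acts by the same scalar $l_{mk}(\Lambda) = \lambda_k^{(m)}(\Lambda) - k + 1$ on every vector of the $\gl_m$-submodule (it only depends on the $\gl_m$-weight data, which is constant on the submodule), the scalar computed on $v_\mu$ equals the scalar on $\xi_\Lambda$, completing the proof. The main obstacle in either approach is justifying that the quantum minor is genuinely central in $U(\gl_m)$ and that $\xi_\Lambda$ sits inside an irreducible $\gl_m$-submodule with the stated highest weight; the first is exactly the final sentence of Proposition \ref{2}, and the second is the standard branching property of the Gelfand--Tsetlin construction recalled just above Proposition \ref{3} --- so with those cited, the remaining computation is short.
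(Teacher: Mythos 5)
Your preferred (second) argument is essentially correct, and it is worth pointing out that the paper itself gives no proof of this proposition: it is recalled from \cite{Molev89, Molev}, where it is established by exactly the route you choose (centrality of the leading quantum minor, the branching structure underlying the Gelfand--Tsetlin basis, and evaluation on a highest weight vector). So your proposal supplies the standard proof rather than deviating from one, and your first, inductive Laplace-expansion sketch is not needed. Three points should be tightened. First, the last assertion of Proposition \ref{2} concerns only the full minor $\Delta^{1,\ldots,n}_{1,\ldots,n}\left(T\left(x\right)\right)$; for $m<n$ you should instead invoke the displayed commutation relation of Proposition \ref{2} with $a=b=\left(1,\ldots,m\right)$ and $i,j\le m$, together with the fact that a quantum minor with a repeated row or column index vanishes (the same fact implicitly needed for the $m=n$ assertion) --- this is what gives centrality of $\Delta^{1,\ldots,m}_{1,\ldots,m}\left(T\left(x\right)\right)$ in $U\left(\gl_m\right)$. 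Second, in the evaluation on the $\gl_m$-highest weight vector $v_\mu$ the surviving identity term is $\prod_{k=1}^m\left(e_{kk}-x_k\right)=\prod_{k=1}^m\left(e_{kk}-x-\left(k-1\right)\right)$, since $x_k=x+k-1$ in Definition \ref{qminor}; you wrote $e_{kk}-x+k-1$, a sign slip that, as written, is inconsistent with your own identification with $\ell^{\left(m\right)}_k-x$. With the correct sign one gets $\left(\mu_k-k+1-x\right)v_\mu=\left(\ell^{\left(m\right)}_k-x\right)v_\mu$, exactly as in Definition \ref{ell}. Third, the claim that only the identity permutation survives deserves one more line: for $\sigma\neq\mathrm{id}$ take the largest $k$ with $\sigma\left(k\right)\neq k$; the factors to its right are diagonal and preserve the line through $v_\mu$, while the factor $T_{\sigma\left(k\right),k}\left(x_k\right)=e_{\sigma\left(k\right),k}$ has $\sigma\left(k\right)<k$, hence is a raising operator of $\gl_m$ and annihilates $v_\mu$. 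With these repairs (and the branching fact that $\xi_\Lambda$ lies in the irreducible $\gl_m$-constituent of highest weight $\left(\lambda^{\left(m\right)}_1\left(\Lambda\right),\ldots,\lambda^{\left(m\right)}_m\left(\Lambda\right)\right)$, for which citing \cite{Molev} is appropriate since the paper does the same), the argument is complete and matches the proof in the cited literature.
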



From Proposition \ref{2}, we know that $\ell^{(n-1)}_j$ does not commute with all $e_{i n}$, for $1\leq i, j \leq n-1$. So $\Delta^{1,\ldots, n-2,n-1}_{1,\ldots, n-2,n}\big(T(\ell^{(n-1)}_j)\big)$ is not well defined. However,

\begin{definition}
Suppose that $\Delta^{1,\ldots, n-2,n-1}_{1,\ldots, n-2,n}\left(T\left(x\right)\right)=\sum_{i=0}^{n-1}r_i x^i$, with coefficient $r_i\in U\left({\mathfrak {gl}}_n\right)$, then we can define $\left(\Delta_l\right)^{1,\ldots, n-2,n-1}_{1,\ldots, n-2,n}\big(T(\ell^{(n-1)}_j)\big)$ and $\left(\Delta_r\right)^{1,\ldots, n-2,n}_{1,\ldots, n-2,n-1}\big(T(\ell^{(n-1)}_j)\big)$ in the following way
\begin{align*}
&\left(\Delta_l\right)^{1,\ldots, n-2,n-1}_{1,\ldots, n-2,n}\big(T(\ell^{(n-1)}_j)\big)=\big(\ell^{(n-1)}_j\big)^{n-1} r_{n-1}+\big(\ell^{(n-1)}_j\big)^{n-2} r_{n-2}+\cdots +\ell^{\left(n-1\right)}_jr_{1}+r_0,\\
&\left(\Delta_r\right)^{1,\ldots, n-2,n}_{1,\ldots, n-2,n-1}\big(T(\ell^{(n-1)}_j)\big)=r_{n-1} \big(\ell^{(n-1)}_j\big)^{n-1}+ r_{n-2} \big(\ell^{(n-1)}_j\big)^{n-2}+\cdots +r_{1} \ell^{\left(n-1\right)}_j+r_0.
\end{align*}
\end{definition}

\begin{lem}\label{4}
The actions of $\left(\Delta_{l}\right)^{1,...,n-2,n-1}_{1,...,n-2,n}\big(T(\ell^{(n-1)}_j)\big)$ and $\left(\Delta_r\right)^{1,...,n-2,n}_{1,...,n-2,n-1}\big(T(\ell^{(n-1)}_j)\big)$ on the Gelfand-Tsetlin basis vector $\xi_{\Lambda}$ are as follows
\begin{align*}
    &\left(\Delta_{l}\right)^{1,...,n-1}_{1,...,n-2,n}\big(T(\ell^{(n-1)}_j)\big)\xi_{\Lambda}\\
    =&(-1)^{1+n+j}\sqrt{-\frac{\prod_{i} (l_{n-1,j}-l_{n i})\prod_{k\neq j}(l_{n-1,j}+1-l_{n-1,k})\prod_{s}(l_{n-1,j}+1-l_{n-2,s})}{\prod_{k\neq j}(l_{n-1,j}-l_{n-1,k})}}\xi_{\Lambda+\delta_{n-1,j}},
    \\
     &\left(\Delta_r\right)^{1,...,n-2,n}_{1,...,n-1}\big(T(\ell^{(n-1)}_j)\big)\xi_{\Lambda}\\
     =&(-1)^{1+n+j}\sqrt{-\frac{\prod_{s}(l_{n-1,j}-l_{n-2,s})\prod_{k\neq j}(l_{n-1,j}-l_{n-1,k})\prod_{i}(l_{n-1,j}-1-l_{n i})}{\prod_{k\neq j}(l_{n-1,j}-1-l_{n-1,k})}} \xi_{\Lambda-\delta_{n-1, j}}.
\end{align*}
Here the index $i$, $k$, $s$ in the product operator $\prod_{i}$, $\prod_{k\neq j}$, $\prod_{s}$, range from $1$ to $n$, $n-1$, $n-2$, and $j$ is not included in $\prod_{k\neq j}$. The number $l_{n i}=l_{n i}\left(\Lambda\right)$, $l_{n-1,j}=l_{n-1,j}\left(\Lambda\right)$, $l_{n-1,k}=l_{n-1,k}\left(\Lambda\right)$, $l_{n-2,s}=l_{n-2,s}\left(\Lambda\right)$ are all evaluated at the pattern $\Lambda$.
\end{lem}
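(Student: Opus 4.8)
The plan is to reduce both $(\Delta_l)^{1,\ldots,n-1}_{1,\ldots,n-2,n}\!\big(T(\ell^{(n-1)}_j)\big)\xi_\Lambda$ and $(\Delta_r)^{1,\ldots,n-2,n}_{1,\ldots,n-1}\!\big(T(\ell^{(n-1)}_j)\big)\xi_\Lambda$ to the action on Gelfand--Tsetlin vectors of the honest $x$-dependent quantum minors $\Delta^{1,\ldots,n-1}_{1,\ldots,n-2,n}(T(x))$ and $\Delta^{1,\ldots,n-2,n}_{1,\ldots,n-1}(T(x))$, and then to a single polynomial evaluation. First I would show that $M(x):=\Delta^{1,\ldots,n-1}_{1,\ldots,n-2,n}(T(x))$ behaves like a ``raising operator'' for the $(n-1)$-st row of the pattern. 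Proposition \ref{2} applied with $e_{kk}$ shows $M(x)$ has $\mathfrak{gl}_n$-weight $\epsilon_{n-1}-\epsilon_n$ (the same as $e_{n-1,n}$), and applied with $e_{ij}$ for $1\le i,j\le n-2$ it shows that the two terms on the right-hand side either coincide (when $i=j$) or force a repeated row/column index into a quantum minor and hence vanish, so $M(x)$ commutes with the subalgebra $\mathfrak{gl}_{n-2}$. Combined with the branching structure of the Gelfand--Tsetlin basis this forces
\[
M(x)\,\xi_\Lambda=\sum_{i=1}^{n-1}c_i(x)\,\xi_{\Lambda+\delta_{n-1,i}},\qquad \Delta^{1,\ldots,n-2,n}_{1,\ldots,n-1}(T(x))\,\xi_\Lambda=\sum_{i=1}^{n-1}d_i(x)\,\xi_{\Lambda-\delta_{n-1,i}},
\]
for scalar polynomials $c_i(x),d_i(x)\in\mathbb{C}[x]$.

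The next step is to identify $c_i$ and $d_i$. Counting the diagonal factors $T_{pp}(x_p)=e_{pp}-x_p$ that can appear in a single permutation term of the minor shows $\deg c_i\le n-2$, and that the coefficient of $x^{n-2}$ in $M(x)$ is $(-1)^{n-2}e_{n-1,n}$ (the only surviving permutation is the identity, contributing $(-x_1)\cdots(-x_{n-2})\,T_{n-1,n}(x_{n-1})$). Hence the leading coefficient of $c_i$ is a fixed multiple of the coefficient $[e_{n-1,n}]_i$ of $\xi_{\Lambda+\delta_{n-1,i}}$ in $e_{n-1,n}\xi_\Lambda$, which is given by Proposition \ref{3} with $k=n-1$. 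It then remains to locate the $n-2$ roots of $c_i$: I claim $c_i(x)$ vanishes at $x=l_{n-1,k}(\Lambda)$ for every $k\ne i$. This ``localization'' --- equivalently, the statement that substituting $x=\ell^{(n-1)}_j$ collapses the minor onto a single Gelfand--Tsetlin vector --- is the technical heart of the lemma, and I would establish it either by invoking the known action of the corresponding lowering/raising operators in Molev's construction of the Gelfand--Tsetlin basis (\cite{Molev89,Molev}), or by producing a cancellation identity through iterated Laplace expansion (Proposition \ref{Laplace expansion}) along the last row, using $\Delta^{1,\ldots,m}_{1,\ldots,m}(T(x))\xi_\Lambda=\prod_s(\ell^{(m)}_s-x)\xi_\Lambda$ for the principal minors that appear. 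Since $\deg c_i\le n-2$ and $c_i$ has these $n-2$ prescribed roots, $c_i(x)$ equals $\prod_{k\ne i}\big(x-l_{n-1,k}(\Lambda)\big)$ times the leading constant just computed; the same applies to $d_i(x)$ with $e_{n-1,n}$ replaced by $e_{n,n-1}$.

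With $c_i,d_i$ determined, the remaining $\Delta_l$/$\Delta_r$ bookkeeping is routine. The key point is that $\ell^{(n-1)}_j$ acts as the scalar $l_{n-1,j}(\Lambda')$ on each basis vector $\xi_{\Lambda'}$, so it commutes with all of $U(\mathfrak{gl}_{n-1})$ (it is constant on $\mathfrak{gl}_{n-1}$-isotypic blocks of $L(\lambda)$); hence commuting powers of $\ell^{(n-1)}_j$ past the $x$-coefficients of the minor is legitimate, amounting to replacing $\ell^{(n-1)}_j$ by its eigenvalue on the output vector. Writing $M(x)=\sum_i r_i x^i$, one gets $(\Delta_l)^{1,\ldots,n-1}_{1,\ldots,n-2,n}\!\big(T(\ell^{(n-1)}_j)\big)\xi_\Lambda=\sum_i(\ell^{(n-1)}_j)^i r_i\xi_\Lambda=\sum_i c_i\big(l_{n-1,j}(\Lambda)+\delta_{ij}\big)\xi_{\Lambda+\delta_{n-1,i}}$, using $l_{n-1,j}(\Lambda+\delta_{n-1,i})=l_{n-1,j}(\Lambda)+\delta_{ij}$; the factor $\prod_{k\ne i}\big(x-l_{n-1,k}(\Lambda)\big)$ in $c_i$ kills every $i\ne j$, leaving the single term $c_j\big(l_{n-1,j}(\Lambda)+1\big)\xi_{\Lambda+\delta_{n-1,j}}$, and inserting the explicit $[e_{n-1,n}]_j$ and simplifying the products of square roots yields the first formula. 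For $\Delta_r$ the powers of $\ell^{(n-1)}_j$ sit on the right, so $(\Delta_r)^{1,\ldots,n-2,n}_{1,\ldots,n-1}\!\big(T(\ell^{(n-1)}_j)\big)\xi_\Lambda=\sum_i l_{n-1,j}(\Lambda)^i r_i\xi_\Lambda=d_j\big(l_{n-1,j}(\Lambda)\big)\xi_{\Lambda-\delta_{n-1,j}}$, giving the second formula. The main obstacle is clearly the root location / localization of $c_i$ and $d_i$; the rest is weight bookkeeping and elementary manipulation of the (Gamma-free) matrix coefficients, with the signs requiring a careful but routine count.
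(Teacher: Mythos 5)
Your final bookkeeping (substituting $\ell^{(n-1)}_j$ on the correct side, noting that the evaluation kills every shifted vector except the $j$-th, and then inserting the Proposition \ref{3} coefficients) is exactly the mechanism that makes the lemma work, and your formulas for the degree, leading coefficient and roots of $c_i(x)$, $d_i(x)$ are in fact correct. But the two steps on which everything rests are not proved. First, the structural claim that $M(x)\xi_\Lambda=\sum_i c_i(x)\,\xi_{\Lambda+\delta_{n-1,i}}$ does not follow from the constraints you invoke: having $\mathfrak{gl}_n$-weight $\epsilon_{n-1}-\epsilon_n$ and commuting with $\mathfrak{gl}_{n-2}$ only fixes the rows $1,\dots,n-2$ and $n$ of the output patterns and forces the sum of row $n-1$ to increase by $1$; it does not exclude contributions from patterns whose $(n-1)$-st row differs from $\lambda^{(n-1)}(\Lambda)$ by, say, $\delta_a+\delta_b-\delta_c$, which carry the same weight and the same $\mathfrak{gl}_{n-2}$-content. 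Second, the root localization $c_i\big(l_{n-1,k}(\Lambda)\big)=0$ for $k\neq i$ --- which you yourself call the technical heart --- is only sketched: "invoke Molev" amounts to citing a statement essentially equivalent to the lemma, and the proposed cancellation via iterated Laplace expansion (Proposition \ref{Laplace expansion}) is not carried out, and it is not clear it terminates in a clean identity.

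Both gaps close simultaneously, and much more cheaply, via Proposition \ref{2}: taking $e_{ij}=e_{n-1,n}$ (resp.\ $e_{n,n-1}$) there gives $\big[\Delta^{1,\dots,n-1}_{1,\dots,n-1}(T(x)),e_{n-1,n}\big]=\Delta^{1,\dots,n-2,n-1}_{1,\dots,n-2,n}(T(x))$ and $\big[e_{n,n-1},\Delta^{1,\dots,n-1}_{1,\dots,n-1}(T(x))\big]=\Delta^{1,\dots,n-2,n}_{1,\dots,n-2,n-1}(T(x))$. Since the principal minor acts diagonally as $\prod_m\big(\ell^{(n-1)}_m-x\big)$ and $e_{n-1,n}$, $e_{n,n-1}$ shift a single entry of the $(n-1)$-st row, the commutator form immediately yields your structural claim together with the explicit $c_i(x)=[e_{n-1,n}]_i\prod_{m\neq i}\big(l_{n-1,m}(\Lambda)-x\big)$, i.e.\ the roots you needed; this is precisely the paper's proof (its identities \eqref{Eleft commutator}--\eqref{Eright commutator}), after which the substitution-and-collapse argument and the sign count proceed as you describe (note the sign $(-1)^{1+n+j}$ only comes out once you track the sign of $\prod_{m\neq j}\big(l_{n-1,j}\pm 1-l_{n-1,m}\big)$ when absorbing it into the square root). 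As written, then, your proposal has a genuine gap at its central step, although it is repairable by the commutator identity you already had available in Proposition \ref{2}.
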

\begin{proof}
We suppose that
\begin{equation*}
\Delta^{1,...,n-1}_{1,...,n-1}\left(T\left(x\right)\right)=(-1)^{n-1}c_{n-1} x^{n-1}+(-1)^{n-2}c_{n-2} x^{n-2}+\cdots -c_1 x+c_0.    
\end{equation*}
Here $c_{n-1}=1$ and $c_{n-1-i} =\sum_{1 \leq s_1< \cdots < s_i \leq n-1} \ell^{\left(n-1\right)}_{ s_1}\ell^{\left(n-1\right)}_{ s_2}\cdots \ell^{\left(n-1\right)}_{ s_i}$. By Proposition \ref{2}, we have
\begin{align}
    \left[\Delta^{1,...,n-1}_{1,...,n-1}\left(T\left(x\right)\right),e_{n-1,n}\right]&=\Delta^{1,...,n-2,n-1}_{1,...,n-2,n}\left(T\left(x\right)\right)\label{Eleft commutator},\\
    \left[e_{n,n-1},\Delta^{1,...,n-1}_{1,...,n-1}\left(T\left(x\right)\right)\right]&=\Delta^{1,...,n-2,n}_{1,...,n-2,n-1}\left(T\left(x\right)\right)\label{Eright commutator}.
\end{align}
As a result of \eqref{Eleft commutator} and \eqref{Eright commutator}, we have
\begin{align*}
    &\left(\Delta_{l}\right)^{1,...,n-2,n-1}_{1,...,n-2,n}\big(T(\ell^{(n-1)}_j)\big)=\big(-\ell^{(n-1)}_j\big)^{n-1}\left[c_{n-1},e_{n-1,n}\right]+\big(-\ell^{(n-1)}_j\big)^{n-2}\left[c_{n-2},e_{n-1,n}\right]+\cdots+\left[c_0,e_{n-1,n}\right],\\
    &\left(\Delta_{r}\right)^{1,...,n-2,n}_{1,...,n-2,n-1}\big(T(\ell^{(n-1)}_j)\big)=\left[e_{n,n-1},c_{n-1}\right]\big(-\ell^{(n-1)}_j\big)^{n-1}+\left[e_{n,n-1},c_{n-2}\right]\big(-\ell^{(n-1)}_j\big)^{n-2}+\cdots+\left[e_{n,n-1},c_0\right].
\end{align*}
From Proposition \ref{3}, we have
\begin{align*}
    &\sum_{i=0}^{n-1}\big(-\ell^{(n-1)}_j\big)^{i}c_{i}e_{n-1,n}\xi_{\Lambda}=\sum_{i=0}^{n-1}e_{n,n-1}c_{i}\big(-\ell^{(n-1)}_j\big)^{i}\xi_{\Lambda}=0,
    \\
    &\sum_{i=1}^{n-1}\big(-\ell^{(n-1)}_j\big)^{i}e_{n-1,n}c_{i}\xi_{\Lambda}=
    \left(-1\right)^{n+j}\sqrt{-\frac{\prod_{i} (l_{n-1,j}-l_{n i})\prod_{k\neq j}(l_{n-1,j}+1-l_{n-1,k})\prod_{s}(l_{n-1,j}+1-l_{n-2,s})}{\prod_{k\neq j}(l_{n-1,j}-l_{n-1,k})}}\xi_{\Lambda+\delta_{n-1,j}},
    \\
    &\sum_{i=0}^{n-1}c_{i}e_{n,n-1}\big(-\ell^{(n-1)}_j\big)^{i}\xi_{\Lambda}=\left(-1\right)^{n+j}\sqrt{-\frac{\prod_{s}(l_{n-1,j}-l_{n-2,s})\prod_{k\neq j}(l_{n-1,j}-l_{n-1,k})\prod_{i}(l_{n-1,j}-1-l_{n i})}{\prod_{k\neq j}(l_{n-1,j}-1-l_{n-1,k})}} \xi_{\Lambda-\delta_{n-1, j}}.
\end{align*}
The actions of $\left(\Delta_{l}\right)^{1,...,n-2,n-1}_{1,...,n-2,n}\big(T(\ell^{(n-1)}_j)\big)$, $\left(\Delta_r\right)^{1,...,n-2,n}_{1,...,n-2,n-1}\big(T(\ell^{(n-1)}_j)\big)$ are derived from this three identities.
\end{proof}
\subsection{Projection operator $\mathrm{Pr}_{i}$}
Let us introduce the projection operator $\mathrm{Pr}_{i}$ ( see e.g. \cite{MG,Molev}) here.
\begin{definition}\label{def Pr}
    For $1\leq i \leq n-1$, define $\mathrm{Pr}_{i}\in {\rm End}\left(L\left(\lambda\right)\right)\otimes {\rm End}\left(\mathbb{C}^{n-1}\right)$ as follows
    \begin{equation*}
    \mathrm{Pr}_{i}=\prod_{l\ne i}\frac{\left(T^{\left(n-1\right)}-\left(\ell^{\left(n-1\right)}_l+n-2\right)\cdot \mathrm{Id}_{n-1}\right)}{\left(\ell^{\left(n-1\right)}_i-\ell^{\left(n-1\right)}_l\right)}.
\end{equation*}
Here ${\rm Id}_{n-1}$ is the rank $n-1$ diagonal matrix with diagonal elements $1\in U\left({\mathfrak {gl}_n}\right)$.
\end{definition}
Since $T^{\left(n-1\right)}$ satisfies the following identity
\begin{equation*}
    \prod_{i=1}^{n}\left(T^{\left(n-1\right)}-\left(\ell^{\left(n-1\right)}_i+n-2\right)\cdot \mathrm{Id}_{n-1}\right)=0,
\end{equation*}
we have
\begin{align}
    &\mathrm{Pr}_{i}\cdot \mathrm{Pr}_{i}=\mathrm{Pr}_{i},\quad 1\leq i \leq n-1,\label{Pi Pi}\\
    &\mathrm{Pr}_{i} \cdot \mathrm{Pr}_{j}=0, \quad 1\leq i \neq j \leq n-1\label{Pi Pj}.
\end{align}
We can decompose $T$ as follows
\begin{equation}\label{sum P}
    T^{\left(n-1\right)}=\sum_{i=1}^{n-1} \left(\ell^{\left(n-1\right)}_{i}+n-2\right)\mathrm{Pr}_{i}.
\end{equation}
From \eqref{sum P}, we see that for rather general rational fucntions $f$, we have
\begin{equation*}
    f\left(T^{\left(n-1\right)}\right)=\sum_{i=1}^{n-1}f\left(\ell^{\left(n-1\right)}_i+n-2\right)\mathrm{Pr}_{i}.
\end{equation*}
Especially, if we choose $f=1$, we find that
\begin{equation}\label{sum Pi=Id}
    \sum_{i=1}^{n-1}\mathrm{Pr}_{i}=\mathrm{Id}_{n-1}.
\end{equation}
By using $\mathrm{Pr}_{i}$, we can prove the following lemma that will be used later
\begin{lem}\label{quantum square adj}
 Let $\Delta^{1,...,n-1}_{1,...,n-1}\left(T\left(x\right)\right)^{\prime}$ be the derivative of $\Delta^{1,...,n-1}_{1,...,n-1}\left(T\left(x\right)\right)$ with respect to $x$. For $1\leq k \leq n-1$, we have
\begin{align}\label{quantum chapoly}
\left(T^{\left(n-1\right)}-\ell_{k}^{\left(n-1\right)} \mathrm{Id}_{n-1}\right)^{* 2}=-\Delta^{1,...,n-1}_{1,...,n-1}\left(T\left(\ell^{\left(n-1\right)}_k\right)\right)^{\prime}\left(T^{\left(n-1\right)}-\ell_{k}^{\left(n-1\right)} \mathrm{Id}_{n-1} \right)^{*},
\end{align}
where $\left(T^{\left(n-1\right)}-\ell_{k}^{\left(n-1\right)} \mathrm{Id}_{n-1} \right)^{*}$ is adjoint matrix of $T^{(n-1)}-\ell_{k}^{(n-1)} {\rm Id}_{n-1}$, and $\left(T^{\left(n-1\right)}-\ell_{k}^{\left(n-1\right)}\mathrm{Id}_{n-1}\right)^{* 2}$ is the square of $\left(T^{\left(n-1\right)}-\ell_{k}^{\left(n-1\right)}\mathrm{Id}_{n-1}\right)^{*}$. 
\end{lem}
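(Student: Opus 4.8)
The plan is to mirror the classical argument in Lemma \ref{1}, replacing the diagonalizing matrices $\mathcal{P}_{n-1}$, $\mathcal{Q}_{n-1}$ by the projection operators $\mathrm{Pr}_i$, and replacing scalar eigenvalues by the commuting family $\{\ell^{(n-1)}_j\}$ together with the subtle bookkeeping of the shift $n-2$ that appears in Definition \ref{def Pr}. The key point is that, although $T^{(n-1)}$ has noncommuting entries, relations \eqref{Pi Pi}, \eqref{Pi Pj}, \eqref{sum P}, \eqref{sum Pi=Id} say that $T^{(n-1)}$ behaves exactly like a diagonalizable matrix whose spectral projections are the $\mathrm{Pr}_i$ and whose ``eigenvalues'' are $\ell^{(n-1)}_i+n-2$. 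So the identity \eqref{quantum chapoly} should follow from a functional-calculus computation: both sides are polynomials in $T^{(n-1)}$ with coefficients in the commutative algebra generated by the $\ell^{(n-1)}_j$, and two such polynomials agree iff they agree after multiplying by each $\mathrm{Pr}_i$.

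First I would verify, as in the classical proof, the decomposition of the adjoint matrix. Using the standard identity $\big(T^{(n-1)}-x\,\mathrm{Id}_{n-1}\big)^{*}=\Delta^{1,\ldots,n-1}_{1,\ldots,n-1}(T(x))\cdot\big(T^{(n-1)}-x\,\mathrm{Id}_{n-1}\big)^{-1}$ (valid on the algebra of matrices over the field of fractions of the relevant commutative subalgebra, or simply entrywise via cofactors after one checks the entries commute appropriately), and the action formula $\Delta^{1,\ldots,n-1}_{1,\ldots,n-1}(T(x))\,\xi_\Lambda=\prod_{l=1}^{n-1}(\ell^{(n-1)}_l-x)\,\xi_\Lambda$, I would expand $\big(T^{(n-1)}-x\,\mathrm{Id}_{n-1}\big)^{*}$ in the basis $\{\mathrm{Pr}_i\}$ to get
\begin{equation*}
\big(T^{(n-1)}-x\,\mathrm{Id}_{n-1}\big)^{*}=\sum_{i=1}^{n-1}\ \prod_{l\neq i}\big(\ell^{(n-1)}_l+n-2-x\big)\ \mathrm{Pr}_i,
\end{equation*}
where I must be careful about the argument: $\mathrm{Pr}_i$ picks out the spectral value $\ell^{(n-1)}_i+n-2$ of $T^{(n-1)}$, so $\big(T^{(n-1)}-x\big)^{-1}\mathrm{Pr}_i=(\ell^{(n-1)}_i+n-2-x)^{-1}\mathrm{Pr}_i$, and the factor $\ell^{(n-1)}_i+n-2-x$ in the numerator cancels against one factor of the characteristic polynomial. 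Setting $x=\ell^{(n-1)}_k$ (which is legitimate because $\ell^{(n-1)}_k$ is central in the commutative algebra under consideration and commutes with $T^{(n-1)}$, since $T^{(n-1)}$ and all $\ell^{(n-1)}_j$ act diagonally in a common eigenbasis would be too strong — rather $T^{(n-1)}$ commutes with each $\ell^{(n-1)}_j$ by Proposition \ref{2} applied to the submatrix), I obtain $\big(T^{(n-1)}-\ell^{(n-1)}_k\,\mathrm{Id}_{n-1}\big)^{*}=\sum_i \prod_{l\neq i}(\ell^{(n-1)}_l+n-2-\ell^{(n-1)}_k)\,\mathrm{Pr}_i$.

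Then I would compute the square using \eqref{Pi Pi}–\eqref{Pi Pj}, so that cross terms vanish and only the diagonal survives, giving $\big(T^{(n-1)}-\ell^{(n-1)}_k\,\mathrm{Id}_{n-1}\big)^{*2}=\sum_i \prod_{l\neq i}(\ell^{(n-1)}_l+n-2-\ell^{(n-1)}_k)^2\,\mathrm{Pr}_i$. Separately I would compute $\Delta^{1,\ldots,n-1}_{1,\ldots,n-1}(T(x))'$ and evaluate it at $x=\ell^{(n-1)}_k$: differentiating $\prod_{l}(\ell^{(n-1)}_l-x)$ gives $-\sum_m\prod_{l\neq m}(\ell^{(n-1)}_l-x)$, and at $x=\ell^{(n-1)}_k$ only the $m=k$ term contributes a nonzero $\mathrm{Pr}_k$-component in the product with $\big(T^{(n-1)}-\ell^{(n-1)}_k\big)^{*}$; one must track the $n-2$ shift carefully here since the argument of $\Delta$ is $\ell^{(n-1)}_k$ while the spectral values are $\ell^{(n-1)}_i+n-2$. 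Matching the $\mathrm{Pr}_i$-components on both sides of \eqref{quantum chapoly} then reduces to the elementary identity
\begin{equation*}
\prod_{l\neq i}\big(\ell^{(n-1)}_l+n-2-\ell^{(n-1)}_k\big)^2=-\Big(\textstyle\Delta^{1,\ldots,n-1}_{1,\ldots,n-1}(T(x))'\big|_{x=\ell^{(n-1)}_k}\Big)\cdot\prod_{l\neq i}\big(\ell^{(n-1)}_l+n-2-\ell^{(n-1)}_k\big),
\end{equation*}
for each $i$, which after the $\mathrm{Pr}$-decomposition is just scalar algebra in the commutative subalgebra generated by the $\ell^{(n-1)}_j$.

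The main obstacle I anticipate is justifying the functional calculus rigorously in the noncommutative setting: one has to check that $T^{(n-1)}$ genuinely commutes with each $\ell^{(n-1)}_j$ (so that rational expressions in $T^{(n-1)}$ and the $\ell^{(n-1)}_j$ make sense and the identity $\big(T^{(n-1)}-x\big)^{*}\big(T^{(n-1)}-x\big)=\Delta^{1,\ldots,n-1}_{1,\ldots,n-1}(T(x))\,\mathrm{Id}_{n-1}$ holds with the quantum minor inserted in the right order), and that the evaluation $x\mapsto\ell^{(n-1)}_k$ is consistent with the shifted arguments $x_k=x+k-1$ hidden in Definition \ref{qminor}. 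Once the algebra is set up so that everything lives in the commutative subalgebra of $\mathrm{End}(L(\lambda))$ generated by $\{\ell^{(m)}_j\}$ tensored with $\mathrm{End}(\mathbb{C}^{n-1})$, the proof is formally identical to the classical Lemma \ref{1}, as the authors surely intend.
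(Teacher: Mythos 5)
Your overall strategy---expand $(T^{(n-1)}-x)^{*}$ over the projections $\mathrm{Pr}_i$, evaluate at $x=\ell^{(n-1)}_k$, and use \eqref{Pi Pi}---is exactly the paper's, but the execution has a genuine gap: you start from the classical, unshifted identity $(T^{(n-1)}-x)^{*}=\Delta^{1,\ldots,n-1}_{1,\ldots,n-1}(T(x))\,(T^{(n-1)}-x)^{-1}$, which is not valid for the quantum minor. The quantum Cramer-type identity that the paper actually uses carries a shift: $(T^{(n-1)}-x)^{*}=\Delta^{1,\ldots,n-1}_{1,\ldots,n-1}(T(x))\,\big(T^{(n-1)}-(x+n-2)\big)^{-1}$. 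Since $\mathrm{Pr}_i$ has spectral value $\ell^{(n-1)}_i+n-2$, this shift cancels exactly the shift in the eigenvalues, and one gets $(T^{(n-1)}-x)^{*}=\sum_i\prod_{l\neq i}\,(\ell^{(n-1)}_l-x)\,\mathrm{Pr}_i$, with no $n-2$ left over. With your unshifted identity the two shifts do not cancel: your own ingredients would give $\sum_i\prod_{l}(\ell^{(n-1)}_l-x)\,(\ell^{(n-1)}_i+n-2-x)^{-1}\mathrm{Pr}_i$, in which nothing simplifies, so the expansion $\sum_i\prod_{l\neq i}(\ell^{(n-1)}_l+n-2-x)\,\mathrm{Pr}_i$ that you write down does not follow from what you assumed, and it also disagrees with the correct expansion.

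This error propagates to your final step and makes it fail. In the correct expansion, setting $x=\ell^{(n-1)}_k$ kills every term except $i=k$, so $(T^{(n-1)}-\ell^{(n-1)}_k)^{*}=\prod_{l\neq k}(\ell^{(n-1)}_l-\ell^{(n-1)}_k)\,\mathrm{Pr}_k$ consists of a single component; squaring with \eqref{Pi Pi} and using $-\Delta^{1,\ldots,n-1}_{1,\ldots,n-1}(T(\ell^{(n-1)}_k))'=\prod_{l\neq k}(\ell^{(n-1)}_l-\ell^{(n-1)}_k)$ then gives \eqref{quantum chapoly} at once, which is precisely the paper's proof. In your version all components $\mathrm{Pr}_i$ survive at $x=\ell^{(n-1)}_k$, and matching them would require $\prod_{l\neq i}(\ell^{(n-1)}_l+n-2-\ell^{(n-1)}_k)=-\Delta^{1,\ldots,n-1}_{1,\ldots,n-1}(T(\ell^{(n-1)}_k))'$ for every $i$, which is false: the right-hand side equals $\prod_{l\neq k}(\ell^{(n-1)}_l-\ell^{(n-1)}_k)$, independent of $i$ and without the shift. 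So the ``elementary identity'' you reduce to is not an identity. To repair the argument, replace the unshifted adjoint identity by the shifted one; the rest of your setup (commutativity of the entries of $T^{(n-1)}$ with the $\ell^{(n-1)}_j$, idempotency \eqref{Pi Pi} and orthogonality \eqref{Pi Pj} of the projections, and the decomposition \eqref{sum P}) then goes through as you intended.
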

\begin{proof}

From \eqref{sum P} and $\left(T^{\left(n-1\right)}-x\right)^{*}=\Delta_{1,...,n-1}^{1,...,n-1}\left(T^{\left(n-1\right)}\left(x\right)\right) \left(T^{\left(n-1\right)}-(x+n-2)\right)^{-1}$, we have
\begin{equation*}
    \left(T^{\left(n-1\right)}-x\right)^{*}=\sum_{i=1}^{n-1}\frac{\prod_{l=1}^{n-1}\left(\ell^{\left(n-1\right)}_l-x\right)}{\ell^{\left(n-1\right)}_i-x}\mathrm{Pr}_{i},
\end{equation*}
\noindent and
\begin{align*}
\left(T^{\left(n-1\right)}-\ell^{\left(n-1\right)}_k\mathrm{Id}_{n-1}\right)^{*}=\prod_{l=1,l\neq k}^{n-1}\left(\ell_{l}^{\left(n-1\right)}-\ell_{k}^{\left(n-1\right)}\right)\mathrm{Pr}_{k}.
\end{align*}
From \eqref{Pi Pi}, we have
$
    \left(T^{\left(n-1\right)}-\ell_{k}^{\left(n-1\right)}\mathrm{Id}_{n-1} \right)^{* 2}=-\Delta^{1,...,n-1}_{1,...,n-1}\left(T\left(\ell^{\left(n-1\right)}_k\right)\right)^{\prime}\left(T^{\left(n-1\right)}-\ell_{k}^{\left(n-1\right)} \mathrm{Id}_{n-1}\right)^{*}.
$
\end{proof}

\subsection{Diagonalization of the upper left submatrix of $T$}\label{sec:diag}
Now let $\mathcal{P}_{h,n-1}$ and $\mathcal{Q}_{h,n-1}$ be matrices defined as follows
\begin{align*}
&\mathcal{P}_{h,n-1}=\left(\begin{array}{ccc}
\left(-1\right)^{1+1} \Delta_{\hat{1} ,..., n-1 }^{1 ,..., n-2 }\left(T\left(\ell^{\left(n-1\right)}_{1}\right)\right) & \cdots & \left(-1\right)^{1+n-1} \Delta_{\hat{1} ,..., n-1}^{1 ,..., n-2 }\left(T\left(\ell^{\left(n-1\right)}_{n-1}\right)\right) \\
\vdots &  \ddots & \vdots \\
\left(-1\right)^{n-1+1} \Delta_{1 ,..., \widehat{n-1} }^{1 ,..., n-2}\left(T\left(\ell^{\left(n-1\right)}_{1}\right)\right) & \cdots & \left(-1\right)^{n-1+n-1} \Delta_{1 ,..., \widehat{n-1} }^{1 ,..., n-2}\left(T\left(\ell^{\left(n-1\right)}_{n-1}\right)\right)
\end{array}\right),\\
&\mathcal{Q}_{h,n-1}=\left(\begin{array}{ccc}
\left(-1\right)^{1+1} \Delta_{1 ,..., n-2}^{\hat{1} ,..., n-1}\left(T\left(\ell^{\left(n-1\right)}_1\right)\right) & \cdots & \left(-1\right)^{1+n-1} \Delta^{1 ,..., \widehat{n-1} }_{1 ,..., n-2}\left(T\left(\ell^{\left(n-1\right)}_1\right)\right) \\
\vdots & \ddots & \vdots \\
\left(-1\right)^{n-1+1} \Delta_{1 ,..., n-2}^{\hat{1} ,..., n-1}\left(T\left(\ell^{\left(n-1\right)}_{n-1}\right)\right) & \cdots & \left(-1\right)^{n-1+n-1} \Delta^{1 ,..., \widehat{n-1}}_{1 ,..., n-2}\left(T\left(\ell^{\left(n-1\right)}_{n-1}\right)\right)
\end{array}\right).
\end{align*}
The quantum analog of Proposition \ref{PQ} is
\begin{proposition}\label{quantum PQ}
We have
\begin{align}\label{quantum PAAP}
T^{\left(n-1\right)} \mathcal{P}_{h,n-1}=\mathcal{P}_{h,n-1} T^{\left(n-1\right)}_{n-1}, \ \text{ and } \ 
   \mathcal{Q}_{h,n-1} T^{\left(n-1\right)}=T^{\left(n-1\right)}_{n-1}\mathcal{Q}_{h,n-1},
   \end{align}
   where $T^{\left(n-1\right)}$ is the upper left $\left(n-1\right)\times \left(n-1\right)$ submatrix of $T$, and 
   \begin{equation*}
       T^{\left(n-1\right)}_{n-1}=\mathrm{diag}\left(\ell^{\left(n-1\right)}_1+n-2,\ldots,\ell^{\left(n-1\right)}_{n-1}+n-2\right).
   \end{equation*} 
Furthermore, we have that 
\begin{equation}\label{qPQD}
\mathcal{Q}_{h,n-1}\mathcal{P}_{h,n-1}=D_{h,n-1},
\end{equation}
where $D_{h,n-1}={\rm diag}\left(\left(D_{h,n-1}\right)_1,....,\left(D_{h,n-1}\right)_{n-1}\right)$ is a diagonal matrix with the diagonal elements 
\begin{equation*}
\left(D_{h,n-1}\right)_i=
\prod_{l=1,l\ne i}^{n-1}\left(\ell^{\left(n-1\right)}_l-\ell^{\left(n-1\right)}_i\right)\prod_{l=1}^{n-2}\left(\ell^{\left(n-2\right)}_l-\ell^{\left(n-1\right)}_i\right), \text{ for } i=1,...,n-1.
\end{equation*}
\end{proposition}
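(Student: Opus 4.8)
The plan is to mirror the proof of Proposition \ref{PQ}, replacing the classical Laplace expansion by Proposition \ref{Laplace expansion} and Lemma \ref{1} by Lemma \ref{quantum square adj}. The only genuinely new point is that plugging the \emph{operator} $x=\ell^{(n-1)}_i$ into a quantum minor must make sense. By Proposition \ref{2} the quantum minor $\Delta^{1,\ldots,n-1}_{1,\ldots,n-1}(T(x))$ is central in $U(\mathfrak{gl}_{n-1})$, hence so are all its coefficients and so is $\ell^{(n-1)}_i$; thus $\ell^{(n-1)}_i$ commutes with every $e_{pl}$, $1\le p,l\le n-1$, and the substitution $x=\ell^{(n-1)}_i$ into any quantum minor whose row and column indices lie in $\{1,\ldots,n-1\}$ is unambiguous. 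In particular $T^{(n-1)}$, $\mathcal{P}_{h,n-1}$, $\mathcal{Q}_{h,n-1}$, $T^{(n-1)}_{n-1}$ are matrices over $U(\mathfrak{gl}_{n-1})$, and $\ell^{(n-1)}_i$ commutes with all their entries, so the claimed identities are meaningful.

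For the first identity in \eqref{quantum PAAP}, fix $1\le k,i\le n-1$ and expand the quantum minor $\Delta^{k,1,2,\ldots,n-2}_{1,2,\ldots,n-1}(T(x))$ along its first row via the third Laplace formula of Proposition \ref{Laplace expansion} (the entry on the left with argument $x_{n-1}=x+n-2$, the complementary minor with argument $x_1=x$):
\begin{equation*}
\Delta^{k,1,\ldots,n-2}_{1,\ldots,n-1}\left(T(x)\right)=\sum_{j=1}^{n-1}(-1)^{1+j}\left(e_{kj}-(x+n-2)\delta_{kj}\right)\,\Delta^{1,\ldots,n-2}_{1,\ldots,\widehat{j},\ldots,n-1}\left(T(x)\right).
\end{equation*}
Putting $x=\ell^{(n-1)}_i$ and using $\Delta^{1,\ldots,n-2}_{1,\ldots,\widehat{j},\ldots,n-1}(T(\ell^{(n-1)}_i))=(-1)^{i+j}(\mathcal{P}_{h,n-1})_{ji}$ together with the centrality of $\ell^{(n-1)}_i$, the right side equals $(-1)^{1+i}\big((T^{(n-1)}\mathcal{P}_{h,n-1})_{ki}-(\mathcal{P}_{h,n-1}T^{(n-1)}_{n-1})_{ki}\big)$. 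The left side vanishes: for $k\le n-2$ the row index $k$ appears twice, so the quantum minor is zero by the antisymmetry of quantum minors in their row indices; for $k=n-1$ one has $\Delta^{n-1,1,\ldots,n-2}_{1,\ldots,n-1}(T(x))=(-1)^{n-2}\Delta^{1,\ldots,n-1}_{1,\ldots,n-1}(T(x))$ and $\Delta^{1,\ldots,n-1}_{1,\ldots,n-1}(T(\ell^{(n-1)}_i))=0$, since on each Gelfand--Tsetlin vector $\xi_\Lambda$ this operator acts by $(\ell^{(n-1)}_1-x)\cdots(\ell^{(n-1)}_{n-1}-x)$, which contains the factor $\ell^{(n-1)}_i-\ell^{(n-1)}_i$. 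Hence $T^{(n-1)}\mathcal{P}_{h,n-1}=\mathcal{P}_{h,n-1}T^{(n-1)}_{n-1}$. The second identity follows symmetrically, by expanding $\Delta^{1,\ldots,n-1}_{k,1,2,\ldots,n-2}(T(x))$ along its first column via the second Laplace formula and using the entries of $\mathcal{Q}_{h,n-1}$.

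By \eqref{quantum PAAP}, $T^{(n-1)}_{n-1}\,\mathcal{Q}_{h,n-1}\mathcal{P}_{h,n-1}=\mathcal{Q}_{h,n-1}T^{(n-1)}\mathcal{P}_{h,n-1}=\mathcal{Q}_{h,n-1}\mathcal{P}_{h,n-1}\,T^{(n-1)}_{n-1}$, so $T^{(n-1)}_{n-1}$ commutes with $\mathcal{Q}_{h,n-1}\mathcal{P}_{h,n-1}$. Since each entry of $\mathcal{Q}_{h,n-1}\mathcal{P}_{h,n-1}$ commutes with every $\ell^{(n-1)}_p$, the $(p,q)$-entry of the commutator is $(\mathcal{Q}_{h,n-1}\mathcal{P}_{h,n-1})_{pq}(\ell^{(n-1)}_q-\ell^{(n-1)}_p)$, and for $p\ne q$ the operator $\ell^{(n-1)}_q-\ell^{(n-1)}_p$ is invertible on $L(\lambda)$: it acts on $\xi_\Lambda$ by $l_{n-1,q}(\Lambda)-l_{n-1,p}(\Lambda)\ne 0$, because the interlacing conditions give $l_{n-1,1}(\Lambda)>l_{n-1,2}(\Lambda)>\cdots>l_{n-1,n-1}(\Lambda)$. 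Therefore $\mathcal{Q}_{h,n-1}\mathcal{P}_{h,n-1}$ is diagonal; in contrast to the classical case no genericity assumption or analytic continuation is needed.

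It remains to compute the diagonal entries. Exactly the sign bookkeeping of the classical proof identifies $(\mathcal{Q}_{h,n-1}\mathcal{P}_{h,n-1})_{kk}$ with the $(n-1,n-1)$-entry of the left-hand side of \eqref{quantum chapoly} at shift $\ell^{(n-1)}_k$: the $k$-th row of $\mathcal{Q}_{h,n-1}$ and the $k$-th column of $\mathcal{P}_{h,n-1}$ are, up to the common sign $(-1)^{n-1+k}$, the last row resp.\ last column of the quantum adjugate $(T^{(n-1)}-\ell^{(n-1)}_k\mathrm{Id}_{n-1})^{*}$, so the sum over $j$ assembles into $\big((T^{(n-1)}-\ell^{(n-1)}_k\mathrm{Id}_{n-1})^{*\,2}\big)_{n-1,n-1}$. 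By Lemma \ref{quantum square adj} this is $-\big(\Delta^{1,\ldots,n-1}_{1,\ldots,n-1}(T(\ell^{(n-1)}_k))\big)'\cdot\big((T^{(n-1)}-\ell^{(n-1)}_k\mathrm{Id}_{n-1})^{*}\big)_{n-1,n-1}$, and the Gelfand--Tsetlin formula $\Delta^{1,\ldots,m}_{1,\ldots,m}(T(x))\xi_\Lambda=(\ell^{(m)}_1-x)\cdots(\ell^{(m)}_m-x)\xi_\Lambda$ gives $\big(\Delta^{1,\ldots,n-1}_{1,\ldots,n-1}(T(\ell^{(n-1)}_k))\big)'=-\prod_{l\ne k}(\ell^{(n-1)}_l-\ell^{(n-1)}_k)$ and $\big((T^{(n-1)}-\ell^{(n-1)}_k\mathrm{Id}_{n-1})^{*}\big)_{n-1,n-1}=\Delta^{1,\ldots,n-2}_{1,\ldots,n-2}(T(\ell^{(n-1)}_k))=\prod_{l=1}^{n-2}(\ell^{(n-2)}_l-\ell^{(n-1)}_k)$, whence $(D_{h,n-1})_k=\prod_{l\ne k}(\ell^{(n-1)}_l-\ell^{(n-1)}_k)\prod_{l=1}^{n-2}(\ell^{(n-2)}_l-\ell^{(n-1)}_k)$. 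The step I expect to be the main obstacle is exactly this shift bookkeeping: one must check that the arguments $x_r=x+r-1$ occurring in $\mathcal{P}_{h,n-1}$, $\mathcal{Q}_{h,n-1}$, in the four Laplace formulas of Proposition \ref{Laplace expansion}, and in the quantum adjugate of Lemma \ref{quantum square adj} all line up so that the sum of products of complementary minors really is the $(n-1,n-1)$-entry of the adjugate square with the correct arguments. Together with the vanishing of quantum minors with a repeated index and the centrality of $\ell^{(n-1)}_i$ at each substitution, this is the only thing that is not a line-by-line copy of the proof of Proposition \ref{PQ}.
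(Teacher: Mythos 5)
Your proposal is correct and follows the same route as the paper, whose own proof is only a citation of the same two ingredients: the identities \eqref{quantum PAAP} from the quantum Laplace expansions of Proposition \ref{Laplace expansion} (applied to minors with a repeated index, which vanish), and \eqref{qPQD} from Lemma \ref{quantum square adj}, exactly mirroring the classical Proposition \ref{PQ}. Your write-up simply fills in the details the paper leaves implicit (the legitimacy of substituting $x=\ell^{(n-1)}_i$ into minors with indices in $\{1,\dots,n-1\}$, the commutator argument showing $\mathcal{Q}_{h,n-1}\mathcal{P}_{h,n-1}$ is diagonal — where, as you rightly note, the strict inequalities $l_{n-1,1}(\Lambda)>\cdots>l_{n-1,n-1}(\Lambda)$ make any genericity assumption or analytic continuation unnecessary — and the identification of the diagonal entries with the $(n-1,n-1)$-entry of the squared quantum adjugate), all consistent with the paper's argument.
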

\begin{proof}
First, the identity \eqref{quantum PAAP} follows from the quantum Laplace expansion in Proposition \ref{Laplace expansion} and the identity \eqref{qPQD} follows from Lemma \ref{quantum square adj}. 
\end{proof}

Just as in the classical case, to simplify the equation, we can diagonalize the upper left $\left(n-1\right)$-th submatrix of its coefficient matrix. 
Recall that in the classical case, the diagonalization is only taken on the open dense subset with the condition \eqref{nonres} and the conditions \eqref{ineq1}, \eqref{ineq2}. In the quantum case, for $h$ a real number the equation is nonresonant, and the analog of the  the conditions \eqref{ineq1}, \eqref{ineq2} is replaced by a subspace of the representation $L\left(\lambda\right)$ spanned by the basis elements $\xi_\Lambda$ such that the following inequalities are strict, i.e., 
\[L\left(\lambda\right)_0:={\rm span}\{\xi_\Lambda~|~\lambda_j^{\left(n-1\right)}\left(\Lambda\right)-\lambda^{\left(n-2\right)}_i\left(\Lambda\right)\ne 0 \}\subset L\left(\lambda\right).\]
However, just as in the classical setting, the Stokes matrices have no singularities. It is only the diagonalization matrix that introduces the singularities. Therefore, to compute the quantum Stokes matrices of equation \eqref{introqeq}, it is enough to do the diagonalization formally. Thus for simplicity, in below we do the computation in a formal setting.
\begin{rmk}
    The decomposition \eqref{sum P} of $T^{\left(n-1\right)}$ plays the same role as diagonalization. In fact, the relationship between $\mathcal{P}_{h,n-1}$, $\mathcal{Q}_{h,n-1}$ and $\mathrm{Pr}_{k}$ is clear. From \eqref{Pi Pi}, \eqref{Pi Pj} and \eqref{sum P}, we have
    \begin{align*}
        &T^{\left(n-1\right)} \cdot \mathrm{Pr}_{k}=\left(\ell^{\left(n-1\right)}_k+n-2\right)\cdot \mathrm{Pr}_{k},\\
        &\mathrm{Pr}_{k}\cdot  T^{\left(n-1\right)}=\left(\ell^{\left(n-1\right)}_k+n-2\right)\cdot \mathrm{Pr}_{k}.
    \end{align*}
    So every column (row) of $\mathrm{Pr}_k$ is column (row) eigenvector with eigenvalue $\ell^{\left(n-1\right)}_k+n-2$ and from Proposition \ref{quantum PQ} and \eqref{Pi Pi}, $\mathrm{Pr}_k$ has the following form
    \begin{equation}\label{Pk=PQ}
    \mathrm{Pr}_{k}=\left(\left(\mathcal{P}_{h,n-1}\right)_{1k},...,\left(\mathcal{P}_{h,n-1}\right)_{n-1,k}\right)^{\intercal}\cdot \frac{1}{\left(D_{h,n-1}\right)_k} \cdot \left(\left(\mathcal{Q}_{h,n-1}\right)_{k1},...,\left(\mathcal{Q}_{h,n-1}\right)_{k,n-1}\right).
\end{equation}
    As a result, from \eqref{sum Pi=Id} and \eqref{Pk=PQ}, we have
    \begin{equation*}
        \mathcal{P}_{h,n-1}\cdot D_{h,n-1}^{-1} \cdot \mathcal{Q}_{h,n-1}=\sum_{i=1}^{n-1}\mathrm{Pr}_{i}=\mathrm{Id}_{n-1}.
    \end{equation*}
     For the consistence with the classical case, we use the technique of diagonalization from now on.
\end{rmk}

Let us introduce $P_{h,n-1}$ and $Q_{h,n-1}$ (ignoring the pole issue in the denominator), 
\begin{equation*}
    P_{h,n-1}=\mathcal{P}_{h,n-1}\cdot \frac{1}{\sqrt{D_{h,n-1}}}, \ \ \ Q_{h,n-1}=\frac{1}{\sqrt{D_{h,n-1}}}\cdot \mathcal{Q}_{h,n-1}.
\end{equation*}
Then $Q_{h,n-1} P_{h,n-1}$ is identity when restricted to the subspace $L\left(\lambda\right)_0$. And we can introduce the matrix $T_{n-1}$
    \begin{equation}\label{Tn-1}
T_{n-1}:=\operatorname{diag}\left({P}_{h,n-1}^{-1},1\right)\cdot T \cdot \operatorname{diag}\left({P}_{h,n-1},1\right) =\left(\begin{array}{cccc}
\ell^{\left(n-1\right)}_1+n-2 & & 0 & \alpha^{\left(n-1\right)}_1 \\
 & \ddots & & \vdots \\
0 & & \ell^{\left(n-1\right)}_{n-1}+n-2 & \alpha^{\left(n-1\right)}_{n-1} \\
\beta^{\left(n-1\right)}_1 & \cdots & \beta^{\left(n-1\right)}_{n-1} & e_{n n} 
\end{array} \right),
\end{equation}
with entries given by
\begin{align*}
    &\alpha^{\left(n-1\right)}_k=\frac{(-1)^{k+n-1}}{\sqrt{\prod_{l=1,l\ne k}^{n-1}\left(\ell^{\left(n-1\right)}_l-\ell^{\left(n-1\right)}_k\right) \prod_{l=1}^{n-2}\left(\ell^{\left(n-2\right)}_{l}-\ell^{\left(n-1\right)}_k\right)}}\cdot \left(\Delta_{l}\right)_{1, ..., n-2, n}^{1, ..., n-2, n-1}\left(T\left(\ell^{\left(n-1\right)}_k\right)\right),\\
    &\beta^{\left(n-1\right)}_k=(-1)^{k+n-1}{\left(\Delta_{r}\right)_{1,..., n-2,n-1}^{1,..., n-2,n}\left(T\left(\ell^{\left(n-1\right)}_k\right)\right)}\cdot\frac{1}{\sqrt{\prod_{l=1,l\ne k}^{n-1}\left(\ell^{\left(n-1\right)}_l-\ell^{\left(n-1\right)}_k\right) \prod_{l=1}^{n-2}\left(\ell^{\left(n-2\right)}_{l}-\ell^{\left(n-1\right)}_k\right)}}.
\end{align*}
Taking the following partial fraction decomposition of the rational function
\begin{align*}
    &\frac{\prod_{i=1}^n\left(x-\frac{h}{2\pi \mathrm{i} }l_{n i}\left(\Lambda\right)\right)}{\prod_{i=1}^{n-1}\left(x-\frac{h}{2\pi \mathrm{i} }l_{n-1,i}\left(\Lambda\right)\right)}-x+\frac{h}{2\pi \mathrm{i} }\left(\sum_{i=1}^n l_{n i}\left(\Lambda\right)-\sum_{i=1}^{n-1}l_{n-1,i}\left(\Lambda\right)\right)\\
  = &\sum_{k=1}^{n-1} \frac{1}{x-\frac{h}{2\pi\mathrm{i} }l_{n-1,k}\left(\Lambda\right)}\frac{\prod_{i=1}^{n}\left(\frac{h}{2\pi \mathrm{i} }l_{n-1, k}\left(\Lambda\right)-\frac{h}{2\pi \mathrm{i} }l_{ni }\left(\Lambda\right)\right)}{\prod_{i=1,i\neq k}^{n-1}\left(\frac{h}{2\pi \mathrm{i} }l_{n-1, k}\left(\Lambda\right)-\frac{h}{2\pi \mathrm{i} }l_{n-1, i}\left(\Lambda\right)\right)},
\end{align*}
and replacing $x$ by $m+\frac{h l_{n j}\left(\Lambda\right)}{2\pi \mathrm{i} }$, $m\in \mathbb{Z}_{>0}$, lead to the following lemma that will be used later
\begin{proposition}\label{quantum ciq}
When applying actions on $L\left(\lambda\right)_0$, we have
\begin{equation*}
    \frac{\prod_{i=1}^n\big(m+\frac{h \ell^{\left(n\right)}_j}{2\pi \mathrm{i} }-\frac{h \ell^{\left(n\right)}_i}{2\pi \mathrm{i} }\big)}{\prod_{i=1}^{n-1}\big(m+\frac{h \ell^{\left(n\right)}_j}{2\pi \mathrm{i} }-\frac{h \ell^{\left(n-1\right)}_i}{2\pi \mathrm{i} }\big)}=m+\frac{h \ell^{\left(n\right)}_j}{2\pi \mathrm{i} }-\frac{h\left(e_{n n}-\left(n-1\right)\right)}{2\pi \mathrm{i} }-\sum_{k=1}^{n-1}\frac{h \beta^{\left(n-1\right)}_k}{2\pi \mathrm{i} } \frac{1}{m+\frac{h \ell^{\left(n\right)}_j}{2\pi \mathrm{i} }-\frac{h}{2\pi \mathrm{i} }\big(\ell^{\left(n-1\right)}_k-1\big)}\frac{h \alpha^{\left(n-1\right)}_k}{2\pi \mathrm{i} }.
\end{equation*}
\end{proposition}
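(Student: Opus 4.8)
The plan is to reduce this operator identity to a scalar identity on each Gelfand--Tsetlin basis vector, where it becomes exactly the elementary partial fraction decomposition displayed just above. First I would apply both sides to a Gelfand--Tsetlin basis vector $\xi_\Lambda$ of $L(\lambda)_0$ and compare. By Definition \ref{ell} and Proposition \ref{3} the operators $\ell^{(n)}_j$, $\ell^{(n-1)}_k$ and $e_{nn}$ are diagonal on the Gelfand--Tsetlin basis, whereas by Lemma \ref{4} the operator $\alpha^{(n-1)}_k$ sends $\xi_\Lambda$ to a scalar multiple of $\xi_{\Lambda+\delta_{n-1,k}}$ and $\beta^{(n-1)}_k$ sends it to a scalar multiple of $\xi_{\Lambda-\delta_{n-1,k}}$. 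Hence the composite operator in the $k$-th summand of the right-hand side returns $\xi_\Lambda$ to a multiple of itself, and both sides act as scalars on $\xi_\Lambda$, so it is enough to compare those scalars. Note also that the resolvent $\bigl(m+\frac{h\ell^{(n)}_j}{2\pi\mathrm{i}}-\frac{h}{2\pi\mathrm{i}}(\ell^{(n-1)}_k-1)\bigr)^{-1}$ is well defined, since $h$ is real and $m$ a positive integer, so its eigenvalues have real part $m>0$.

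Next I would record three short computations. First, because $\alpha^{(n-1)}_k$ raises $\lambda^{(n-1)}_k(\Lambda)$ by one, one has $l_{n-1,k}(\Lambda+\delta_{n-1,k})-1=l_{n-1,k}(\Lambda)$, so the resolvent, applied to $\alpha^{(n-1)}_k\xi_\Lambda$, acts by the scalar $\bigl(m+\frac{h}{2\pi\mathrm{i}}(l_{nj}(\Lambda)-l_{n-1,k}(\Lambda))\bigr)^{-1}$; after the substitution $x=m+\frac{h}{2\pi\mathrm{i}}l_{nj}(\Lambda)$ this is the $k$-th pole factor of the partial fraction decomposition. Second, by Proposition \ref{3} one has $e_{nn}\xi_\Lambda=\bigl(\sum_{i=1}^{n}\lambda^{(n)}_i-\sum_{i=1}^{n-1}\lambda^{(n-1)}_i\bigr)\xi_\Lambda$, and since $\sum_{i=1}^{n}(i-1)-\sum_{i=1}^{n-1}(i-1)=n-1$, this gives $(e_{nn}-(n-1))\xi_\Lambda=\bigl(\sum_{i=1}^{n}l_{ni}(\Lambda)-\sum_{i=1}^{n-1}l_{n-1,i}(\Lambda)\bigr)\xi_\Lambda$, which matches the polynomial part subtracted off in the partial fraction decomposition. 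Third, using the explicit square-root coefficients in Lemma \ref{4} and Proposition \ref{3} together with the normalization factors in the definitions of $\alpha^{(n-1)}_k$ and $\beta^{(n-1)}_k$, evaluated at the appropriate shifted patterns, a direct computation shows that the $\prod_{l=1}^{n-2}(\ell^{(n-2)}_l-\ell^{(n-1)}_k)$ factors cancel and
\begin{equation*}
\beta^{(n-1)}_k\alpha^{(n-1)}_k\,\xi_\Lambda=-\,\frac{\prod_{i=1}^{n}\bigl(l_{n-1,k}(\Lambda)-l_{ni}(\Lambda)\bigr)}{\prod_{i=1,\,i\ne k}^{n-1}\bigl(l_{n-1,k}(\Lambda)-l_{n-1,i}(\Lambda)\bigr)}\,\xi_\Lambda .
\end{equation*}
This is precisely the coefficient needed so that the $k$-th summand of the asserted formula reproduces the $k$-th term of the partial fraction decomposition after the substitution $x=m+\frac{h}{2\pi\mathrm{i}}l_{nj}(\Lambda)$; it is the quantum counterpart of the classical residue computation $b^{(n-1)}_k a^{(n-1)}_k=-\prod_{l=1}^{n}(\lambda^{(n)}_l-\lambda^{(n-1)}_k)/\prod_{l\ne k}(\lambda^{(n-1)}_l-\lambda^{(n-1)}_k)$ used in the proof of Proposition \ref{mainpro}.

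Finally I would assemble the pieces: substituting the three computations and $x=m+\frac{h}{2\pi\mathrm{i}}l_{nj}(\Lambda)$ into the partial fraction decomposition turns it term by term into the scalar identity obtained by applying the asserted formula to $\xi_\Lambda$; since the vectors $\xi_\Lambda$ in $L(\lambda)_0$ span that subspace, the claimed operator identity follows. I expect the third computation to be the main obstacle: one must keep careful track of the Gelfand--Tsetlin pattern at which each normalization square root is evaluated once $\alpha^{(n-1)}_k$ and $\beta^{(n-1)}_k$ are composed, and verify that the product of the two square roots collapses to the displayed rational function with the correct sign, the $\prod_{l=1}^{n-2}(\ell^{(n-2)}_l-\ell^{(n-1)}_k)$ factors in the normalizations cancelling the $\prod_s(l_{n-1,k}\pm1-l_{n-2,s})$ factors coming from Lemma \ref{4}. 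The remaining steps are routine bookkeeping.
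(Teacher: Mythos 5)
Your proposal is correct and takes essentially the same approach as the paper's proof: both reduce the identity to the Gelfand--Tsetlin basis vectors spanning $L(\lambda)_0$, evaluate the actions of $\alpha^{(n-1)}_k$ and $\beta^{(n-1)}_k$ via Lemma \ref{4} (with the shifted-pattern bookkeeping and the cancellation of the level-$(n-2)$ normalization factors), and then invoke the partial fraction decomposition at $x=m+\frac{h}{2\pi \mathrm{i}}l_{nj}(\Lambda)$. The only difference is presentational: you package the $k$-th summand as the scalar by which $\beta^{(n-1)}_k\bigl(\,\cdot\,\bigr)^{-1}\alpha^{(n-1)}_k$ acts, while the paper writes out the two individual actions and sums.
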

\begin{proof}
By Lemma \ref{4}, for any Gelfand-Tsetlin basis vector $\xi_{\Lambda}\in L\left(\lambda\right)_0$, we have
\begin{align*}
\alpha_{k}^{\left(n-1\right)} \xi_{\Lambda} &=\sqrt{-\frac{\prod_{i=1}^n\left(l_{n-1,k}\left(\Lambda\right)-l_{n i}\left(\Lambda\right)\right)}{\prod_{i=1,i\neq k}^{n-1}\left(l_{n-1,k}\left(\Lambda\right)-l_{n-1,i}\left(\Lambda\right)\right)}}\xi_{\Lambda+\delta_{n-1,k}},
\\
\beta_{k}^{\left(n-1\right)} \xi_{\Lambda} &=\sqrt{-\frac{\prod_{i=1}^n\left(l_{n-1,k}\left(\Lambda\right)-1-l_{n i}\left(\Lambda\right)\right)}{\prod_{i=1,i\neq k}^{n-1}\left(l_{n-1,k}\left(\Lambda\right)-1-l_{n-1,i}\left(\Lambda\right)\right)}} \xi_{\Lambda-\delta_{n-1, k}}.
\end{align*}
As a result, we have
\begin{align*}
    &\left(m+\frac{h}{2\pi \mathrm{i} }\ell^{\left(n\right)}_j-\frac{h}{2\pi \mathrm{i} }\left(e_{n n}-\left(n-1\right)\right)-\sum_{k=1}^{n-1}\frac{h \beta^{\left(n-1\right)}_k}{2\pi \mathrm{i} } \frac{1}{m+\frac{h}{2\pi \mathrm{i} }\ell^{\left(n\right)}_j-\frac{h}{2\pi \mathrm{i} }\left(\ell^{\left(n-1\right)}_k-1\right)}\frac{h \alpha^{\left(n-1\right)}_k}{2\pi \mathrm{i} }\right)\xi_{\Lambda}\\
    =&\left(m+\frac{hl_{n j}\left(\Lambda\right)}{2\pi \mathrm{i} }-\frac{h}{2\pi \mathrm{i} }\left(\sum_{i=1}^n l_{n i}\left(\Lambda\right)-\sum_{i=1}^{n-1}l_{n-1,i}\left(\Lambda\right)\right)\right)\xi_{\Lambda}\\
    +&\sum_{k=1}^{n-1}\frac{1}{m+\frac{h}{2\pi \mathrm{i} }l_{n j}\left(\Lambda\right)-\frac{h}{2\pi \mathrm{i} }l_{n-1, k}\left(\Lambda\right)}\frac{\prod_{i=1}^{n}\left(\frac{h}{2\pi \mathrm{i} }l_{ni }\left(\Lambda\right)-\frac{h}{2\pi \mathrm{i} }l_{n-1, k}\left(\Lambda\right)\right)}{\prod_{i=1,i\neq k}^{n-1}\left(\frac{h}{2\pi \mathrm{i} }l_{n-1, i}\left(\Lambda\right)-\frac{h}{2\pi \mathrm{i} }l_{n-1, k}\left(\Lambda\right)\right)}\xi_{\Lambda}\\
=&\frac{\prod_{i=1}^n\left(m+\frac{h l_{nj}\left(\Lambda\right)}{2\pi \mathrm{i} }-\frac{h l_{ni}\left(\Lambda\right)}{2\pi \mathrm{i} }\right)}{\prod_{i=1}^{n-1}\left(m+\frac{h l_{nj}\left(\Lambda\right)}{2\pi \mathrm{i} }-\frac{h l_{n-1,i}\left(\Lambda\right)}{2\pi \mathrm{i} }\right)}\xi_{\Lambda}.
\end{align*}
Since $\xi_{\Lambda}$ is a basis vector, so when applying actions on any other vector in $L\left(\lambda\right)_0$, we still have the equality.
\end{proof}

Then similar to Proposition \ref{10}, if $F_h\left(z;E_n,T_{n-1}\right)$ is a solution of 
\begin{equation}\label{qn-1sys}
\frac{\mathrm{d}F}{\mathrm{d}z}=h\left(\mathrm{i} E_n+\frac{1}{2\pi \mathrm{i} }\frac{T_{n-1}}{z}\right)\cdot F, 
\end{equation}
then $\operatorname{diag}\left({P}_{h,n-1},1\right) F_h\left(z;E_n,T_{n-1}\right)\operatorname{diag}\left({P}_{h,n-1},1\right)^{-1}$ is a solution of the equation \eqref{introqeq}.
Moreover, the quantum Stokes matrices of the equations \eqref{introqeq} and \eqref{qn-1sys} are related by
\begin{equation*}
S_{h\pm }\left(E_n, T\right)=\operatorname{diag}\left({P}_{h,n-1},1\right)S_{h\pm}\left(E_n, T_{n-1}\right)\operatorname{diag}\left({P}_{h,n-1},1\right)^{-1}.
\end{equation*}

As in the classical setting, see Section \ref{secextension}, the quantum Stokes matrices $S_{h\pm }\left(E_n, T\right)$ of \eqref{introqeq} have no singularities. It is only the diagonalization matrix $P_{h,n-1}$ that introduces the singularities. Therefore, to compute the quantum Stokes matrices $S_{h\pm}\left(E_n, T\right)$, it is enough to do the diagonalization and compute $S_{h\pm}\left(E_n, T_{n-1}\right)$ formally.

\subsection{Solutions of the equation \eqref{qn-1sys} and its quantum Stokes matrices}\label{sec:qkummer}
In the following, given diagonal matrices $L_1,...,L_{n-1}, R_1,...,R_{n-1}\in {\rm End}(L(\lambda))$ (under the Gelfand-Tsetlin basis),
\begin{align*}
    L_k&={\rm diag}\big((L_k)_{11},...,(L_k)_{m m}\big)\in{\rm End}(L(\lambda)), \text{ for } k=1,...,n-1,\\
     R_k&={\rm diag}\big((R_k)_{11},...,(R_k)_{m m}\big)\in{\rm End}(L(\lambda)), \text{ for } k=1,...,n-1,
\end{align*}
and $m=\dim L(\lambda)$, we define
\begin{equation}\label{matrixgamma}
\Gamma(L_k):={\rm diag}\left(\Gamma((L_k)_{11}),...,\Gamma((L_k)_{mm})\right)\in {\rm End}(L(\lambda)),
\end{equation}
and define ${ }_{n-1} F_{n-1}\left(L_1, ...,L_{n-1}; R_1,..., R_{n-1} ; z
\right)\in {\rm End}(L(\lambda))$ as the diagonal matrix whose $(i,i)$-entry is
\[{ }_{n-1} F_{n-1}\left((L_1)_{ii}, ...,(L_{n-1})_{ii}; (R_1)_{ii},..., (R_{n-1})_{ii} ; z
\right), \text{ for } i=1,...,m.\]
\begin{proposition}\label{qsolthm}
The equation \eqref{qn-1sys}
has a solution 
\begin{equation}\label{quantum diagFz}
    F_h\left(z;E_n,T_{n-1}\right)=Y_h \cdot H_h\left(z\right) \cdot z^{\frac{h}{2\pi \mathrm{i} }T_{n}},
\end{equation}
where $Y_h=\operatorname{diag}\left(\alpha_{1}^{\left(n-1\right)}, ..., \alpha_{n-1}^{\left(n-1\right)}, 1\right)$, $T_{n}=\operatorname{diag}\left(\ell^{\left(n\right)}_1+n-1,\ell^{\left(n\right)}_2+n-1,...,\ell^{\left(n\right)}_{n}+n-1\right)$,
and the matrix $H_h\left(z\right)$ is an $n\times n$ matrix with entries as follows, for $1 \leq i \leq n-1$ and $1 \leq j \leq n$,
\begin{align*}
&H_h\left(z\right)_{i j}=\frac{1}{\ell^{\left(n\right)}_j-\ell^{\left(n-1\right)}_i} \cdot{ }_{n-1} F_{n-1}\left(\alpha_{i j, 1}, ..., \alpha_{i j, n-1}; \beta_{i j, 1}, ..., \widehat{\beta_{i j, j}}, ..., \beta_{i j, n+1} ;h\cdot \mathrm{i} \cdot z
\right),\\
	&H_h\left(z\right)_{n j}= { }_{n-1} F_{n-1}\left(\alpha_{n j, 1}, ..., \alpha_{n j, n-1}; \beta_{n j, 1}, ..., \widehat{\beta_{n j, j}}, ..., \beta_{n j, n} ;h\cdot \mathrm{i} \cdot z\right),
\end{align*}
with the variables $\left\{\alpha_{i j, l}\right\}$ and $\left\{\beta_{i j, l}\right\}$ given by
\begin{align*}
&\alpha_{i j, i} =\frac{h}{2 \pi \mathrm{i} }\left( \ell_{j}^{\left(n\right)}-\ell_{i}^{\left(n-1\right)}\right), \quad 1\leq i \leq n-1, \, 1\leq j \leq n,\\
&\alpha_{i j, l}=1+\frac{h}{2 \pi \mathrm{i} }\left( \ell_{j}^{\left(n\right)}-\ell_{l}^{\left(n-1\right)}\right),\quad 1 \leq i, j \leq n,\, 1 \leq l \leq n-1, \, l \neq i, \\
&\beta_{i j, l}=1+\frac{h}{2 \pi \mathrm{i} }\left(\ell_{j}^{\left(n\right)}-\ell_{l}^{\left(n\right)}\right), \quad 1 \leq i, j,l \leq n.
\end{align*}
The notation $\widehat{\beta_{i j, j}}$ means the term $\beta_{i j, j}$ is skipped, for $1\leq i \leq n$.
\end{proposition}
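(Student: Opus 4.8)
The plan is to mimic the proof of Proposition~\ref{classical solution}, carrying along the non-commutativity of $U(\mathfrak{gl}_n)$. Write $Y_h H_h(z)=\sum_{m\ge 0}L_m z^m$, so that $F_h(z;E_n,T_{n-1})=\sum_m L_m z^m z^{\frac{h}{2\pi\mathrm{i}}T_n}$, where each $L_m$ is an $n\times n$ matrix with entries in $\mathrm{End}(L(\lambda))$; by Definition~\ref{ghf} and \eqref{matrixgamma}, for $1\le i\le n-1$ one has $(L_m)_{ij}=\alpha^{(n-1)}_i\cdot\frac{(h\mathrm{i})^m}{m!}\frac{1}{\ell^{(n)}_j-\ell^{(n-1)}_i}\prod_{l=1}^{n-1}(\alpha_{ij,l})_m\big/\prod_{l=1,l\ne j}^{n}(\beta_{ij,l})_m$ and $(L_m)_{nj}=\frac{(h\mathrm{i})^m}{m!}\prod_{l=1}^{n-1}(\alpha_{nj,l})_m\big/\prod_{l=1,l\ne j}^{n}(\beta_{nj,l})_m$, the Pochhammer factors being read off entrywise in the Gelfand--Tsetlin basis. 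Plugging $F_h$ into \eqref{qn-1sys} and comparing coefficients of $z^{m-1}z^{\frac{h}{2\pi\mathrm{i}}T_n}$ (using that $T_n$ is diagonal, hence commutes with $z^{\frac{h}{2\pi\mathrm{i}}T_n}$) reduces the statement to the recursions $L_0 T_n=T_{n-1}L_0$ and $mL_m+\frac{h}{2\pi\mathrm{i}}L_m T_n=h\mathrm{i}\,E_n L_{m-1}+\frac{h}{2\pi\mathrm{i}}T_{n-1}L_m$ for $m\ge 1$.

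To verify these identities in $\mathrm{End}(L(\lambda))$ it suffices to apply both sides to each Gelfand--Tsetlin vector $\xi_\Lambda$: by Lemma~\ref{4} the operators $\alpha^{(n-1)}_k,\beta^{(n-1)}_k$ send $\xi_\Lambda$ to an explicit scalar times $\xi_{\Lambda\pm\delta_{n-1,k}}$, so each entry of $L_m\xi_\Lambda$ is a scalar times a single basis vector and the recursions become scalar identities in the numbers $l_{kj}(\Lambda)$. Since $E_n=1\otimes E_{nn}$, rows $1\le i\le n-1$ of the $m\ge1$ recursion become $m(L_m)_{ij}+\frac{h}{2\pi\mathrm{i}}(L_m)_{ij}(\ell^{(n)}_j+n-1)-\frac{h}{2\pi\mathrm{i}}(\ell^{(n-1)}_i+n-2)(L_m)_{ij}=\frac{h}{2\pi\mathrm{i}}\alpha^{(n-1)}_i(L_m)_{nj}$; commuting the factor $\ell^{(n-1)}_i$ past the leading operator $\alpha^{(n-1)}_i$ of $(L_m)_{ij}$ via $\ell^{(n-1)}_i\alpha^{(n-1)}_i=\alpha^{(n-1)}_i(\ell^{(n-1)}_i+1)$ collapses the left side to $\big(m+\frac{h}{2\pi\mathrm{i}}(\ell^{(n)}_j-\ell^{(n-1)}_i)\big)(L_m)_{ij}$, whereupon the identity is checked directly from the explicit Pochhammer products, exactly as \eqref{first n-1 row} in the classical case. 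The $m=0$ identity $L_0T_n=T_{n-1}L_0$ is handled the same way, using in addition $e_{nn}\xi_\Lambda=\big(\sum_s\lambda^{(n)}_s-\sum_s\lambda^{(n-1)}_s(\Lambda)\big)\xi_\Lambda$ and the partial-fraction decomposition preceding Proposition~\ref{quantum ciq} evaluated at $x=\frac{h}{2\pi\mathrm{i}}\ell^{(n)}_j$.

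For the last row $i=n$, the $m\ge1$ recursion reads $m(L_m)_{nj}+\frac{h}{2\pi\mathrm{i}}(\ell^{(n)}_j+n-1)(L_m)_{nj}=h\mathrm{i}(L_{m-1})_{nj}+\frac{h}{2\pi\mathrm{i}}\sum_{k=1}^{n-1}\beta^{(n-1)}_k(L_m)_{kj}+\frac{h}{2\pi\mathrm{i}}e_{nn}(L_m)_{nj}$. The already-verified rows $1\le k\le n-1$ give the operator identity $(L_m)_{kj}=\big(m+\frac{h}{2\pi\mathrm{i}}(\ell^{(n)}_j-(\ell^{(n-1)}_k-1))\big)^{-1}\frac{h}{2\pi\mathrm{i}}\alpha^{(n-1)}_k(L_m)_{nj}$ (the bracketed diagonal operator being invertible for $m\ge1$, since $h$ is real and the $\ell$'s are real on patterns). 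Substituting this makes $\frac{h}{2\pi\mathrm{i}}\sum_k\beta^{(n-1)}_k(L_m)_{kj}$ literally the operator $\sum_k\frac{h\beta^{(n-1)}_k}{2\pi\mathrm{i}}\frac{1}{m+\frac{h\ell^{(n)}_j}{2\pi\mathrm{i}}-\frac{h}{2\pi\mathrm{i}}(\ell^{(n-1)}_k-1)}\frac{h\alpha^{(n-1)}_k}{2\pi\mathrm{i}}$ of Proposition~\ref{quantum ciq} applied to $(L_m)_{nj}$; replacing it by the right-hand side of that proposition, all terms involving $\ell^{(n)}_j$, $n-1$ and $e_{nn}$ cancel, leaving $\frac{\prod_{i=1}^n(m+\frac{h}{2\pi\mathrm{i}}(\ell^{(n)}_j-\ell^{(n)}_i))}{\prod_{i=1}^{n-1}(m+\frac{h}{2\pi\mathrm{i}}(\ell^{(n)}_j-\ell^{(n-1)}_i))}(L_m)_{nj}=h\mathrm{i}(L_{m-1})_{nj}$, the quantum analog of \eqref{recursion insert x}; this last identity follows at once from the explicit expression for $(L_m)_{nj}$ using $(a)_m/(a)_{m-1}=a+m-1$ and peeling off the $l=j$ factor (which contributes $m$) of the denominator product.

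The main obstacle is the bookkeeping forced by the non-commutativity: one must commute the raising/lowering operators $\alpha^{(n-1)}_k,\beta^{(n-1)}_k$ through the rational functions of $\ell^{(n-1)}_k$ so that, after substituting the first $n-1$ rows, the combination is \emph{exactly} the operator sum appearing in Proposition~\ref{quantum ciq}; in particular the product $\beta^{(n-1)}_k\alpha^{(n-1)}_k$, which returns $\xi_\Lambda$ to a multiple of itself, must be identified via Lemma~\ref{4} with the diagonal operator $-\prod_{i=1}^n(\ell^{(n-1)}_k-\ell^{(n)}_i)\big/\prod_{i\ne k}^{n-1}(\ell^{(n-1)}_k-\ell^{(n-1)}_i)$, the quantum counterpart of the residue identity used below \eqref{Fn}. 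Apart from this, the computation is the same as in Proposition~\ref{classical solution}; and since we work formally (equivalently on $L(\lambda)_0$), with $L(\lambda)$ finite-dimensional and each ${}_{n-1}F_{n-1}$ entire, no convergence or well-definedness issue arises.
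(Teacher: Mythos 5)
Your proposal is correct and follows essentially the same route as the paper: expand $Y_hH_h(z)=\sum_m L_mz^m$, reduce to the recursions $L_0T_n=T_{n-1}L_0$ and $mL_m+\tfrac{h}{2\pi\mathrm i}L_mT_n=h\mathrm i\,E_nL_{m-1}+\tfrac{h}{2\pi\mathrm i}T_{n-1}L_m$, check the first $n-1$ rows directly using the shift relation $\ell^{(n-1)}_i\alpha^{(n-1)}_i=\alpha^{(n-1)}_i(\ell^{(n-1)}_i+1)$ (the content of Lemma \ref{commu rels}), and then substitute them into the last row so that the sandwiched operator $\sum_k\beta^{(n-1)}_k(\cdots)^{-1}\alpha^{(n-1)}_k$ is exactly the one in Proposition \ref{quantum ciq}, after which the Pochhammer computation finishes the argument. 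The only (immaterial) difference is that you verify the operator identities by evaluating on Gelfand--Tsetlin vectors via Lemma \ref{4}, where the paper phrases the same steps as operator identities using Lemma \ref{commu rels} and Proposition \ref{quantum ciq}.
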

Before the proof of Proposition \ref{qsolthm}, we introduce the following commutation relations that will be used later
\begin{lem}\label{commu rels}
    For $1\leq i,j \leq n-1$, we have two relations
\begin{align*}
\left[\ell^{\left(n-1\right)}_j,\alpha^{\left(n-1\right)}_i\right]&=\delta_{ij}\alpha^{\left(n-1\right)}_j,\\
    \left[\ell^{\left(n-1\right)}_j,\beta^{\left(n-1\right)}_i\right]&=-\delta_{ij}\beta^{\left(n-1\right)}_j.
\end{align*}
\end{lem}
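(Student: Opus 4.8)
The plan is to establish the two commutation relations in Lemma \ref{commu rels} by tracking how the multiplication operators $\alpha^{(n-1)}_i$ and $\beta^{(n-1)}_i$ shift the Gelfand-Tsetlin labels, and then reading off the eigenvalue relation against the diagonal operator $\ell^{(n-1)}_j$. The key input is Lemma \ref{4}, which (after dividing out the common scalar factors that define $\alpha^{(n-1)}_k$ and $\beta^{(n-1)}_k$ from $(\Delta_l)^{1,\ldots,n-1}_{1,\ldots,n-2,n}$ and $(\Delta_r)^{1,\ldots,n-2,n}_{1,\ldots,n-1}$) shows that on each Gelfand-Tsetlin basis vector
\[
\alpha^{(n-1)}_i\,\xi_\Lambda \;=\; c_i(\Lambda)\,\xi_{\Lambda+\delta_{n-1,i}}, \qquad
\beta^{(n-1)}_i\,\xi_\Lambda \;=\; d_i(\Lambda)\,\xi_{\Lambda-\delta_{n-1,i}},
\]
for scalars $c_i(\Lambda),d_i(\Lambda)$ (the square-root expressions already recorded in the proof of Proposition \ref{quantum ciq}). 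In other words, $\alpha^{(n-1)}_i$ raises the entry $\lambda^{(n-1)}_i(\Lambda)$ by one, and $\beta^{(n-1)}_i$ lowers it by one, each leaving all other labels untouched.

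First I would fix $1\le i,j\le n-1$ and a Gelfand-Tsetlin pattern $\Lambda$, and compute both sides of the first identity on $\xi_\Lambda$. On the one hand, $\ell^{(n-1)}_j\alpha^{(n-1)}_i\xi_\Lambda = \ell^{(n-1)}_j\, c_i(\Lambda)\,\xi_{\Lambda+\delta_{n-1,i}} = l_{n-1,j}(\Lambda+\delta_{n-1,i})\,c_i(\Lambda)\,\xi_{\Lambda+\delta_{n-1,i}}$, while $\alpha^{(n-1)}_i\ell^{(n-1)}_j\xi_\Lambda = l_{n-1,j}(\Lambda)\,c_i(\Lambda)\,\xi_{\Lambda+\delta_{n-1,i}}$. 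Since $l_{m j}(\Lambda)=\lambda^{(m)}_j(\Lambda)-j+1$, the difference of eigenvalues is $l_{n-1,j}(\Lambda+\delta_{n-1,i})-l_{n-1,j}(\Lambda)=\delta_{ij}$, because adding $\delta_{n-1,i}$ changes only $\lambda^{(n-1)}_i$. Hence $[\ell^{(n-1)}_j,\alpha^{(n-1)}_i]\xi_\Lambda=\delta_{ij}\,c_i(\Lambda)\,\xi_{\Lambda+\delta_{n-1,i}}=\delta_{ij}\,\alpha^{(n-1)}_j\xi_\Lambda$. Since the $\xi_\Lambda$ form a basis of $L(\lambda)$, this proves the first relation. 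The second relation is identical except that $\beta^{(n-1)}_i$ sends $\Lambda\mapsto\Lambda-\delta_{n-1,i}$, so the eigenvalue difference is $l_{n-1,j}(\Lambda-\delta_{n-1,i})-l_{n-1,j}(\Lambda)=-\delta_{ij}$, giving $[\ell^{(n-1)}_j,\beta^{(n-1)}_i]=-\delta_{ij}\beta^{(n-1)}_j$.

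One mild subtlety to address is the vanishing convention: if $\Lambda\pm\delta_{n-1,i}$ fails to be a Gelfand-Tsetlin pattern, then $\xi_{\Lambda\pm\delta_{n-1,i}}$ is zero and the corresponding scalar factor $c_i(\Lambda)$ or $d_i(\Lambda)$ also vanishes (the numerator contains a factor that becomes $0$ at the boundary), so both sides of each identity are simply $0$ on such $\xi_\Lambda$ and the relation holds trivially. I would note this in one sentence. There is essentially no hard step here; the only thing to be careful about is keeping the bookkeeping of which label $\delta_{n-1,i}$ modifies straight, and confirming — directly from the formulas in Lemma \ref{4} — that $\alpha^{(n-1)}_i$ and $\beta^{(n-1)}_i$ are pure shift operators in the single label $\lambda^{(n-1)}_i$ with no off-diagonal mixing, which is visible because the right-hand sides of Lemma \ref{4} are proportional to a single basis vector $\xi_{\Lambda\pm\delta_{n-1,j}}$. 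Once that is in hand, the commutators follow from the affine-linear dependence of $l_{n-1,j}$ on the pattern entries.
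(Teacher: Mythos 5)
Your proposal is correct and follows essentially the same route as the paper: both rest on Lemma \ref{4}, which shows that $\alpha^{(n-1)}_i$ and $\beta^{(n-1)}_i$ act as pure shift operators $\xi_\Lambda\mapsto\xi_{\Lambda\pm\delta_{n-1,i}}$ (the factor $1/\sqrt{(D_{h,n-1})_i}$ being diagonal and commuting with $\ell^{(n-1)}_j$), so the commutators are read off from $l_{n-1,j}(\Lambda\pm\delta_{n-1,i})-l_{n-1,j}(\Lambda)=\pm\delta_{ij}$ on the Gelfand--Tsetlin basis. Your explicit treatment of the boundary convention ($\xi_{\Lambda\pm\delta_{n-1,i}}=0$ when the shifted pattern is not admissible) is a harmless addition the paper leaves implicit.
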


\begin{proof}
    From Lemma \ref{4}, we find that for any Gelfand-Tsetlin basis vector $\xi_{\Lambda}\in L\left(\Lambda\right)$, we have
\begin{align*}
    &\left[\ell^{\left(n-1\right)}_j,\left(\Delta_{l}\right)_{1, ..., n-2, n}^{1, ..., n-2, n-1}\left(T\left(\ell^{\left(n-1\right)}_i\right)\right)\right]\xi_{\Lambda}=\delta_{ij}\left(\Delta_{l}\right)_{1, ..., n-2, n}^{1, ..., n-2, n-1}\left(T\left(\ell^{\left(n-1\right)}_i\right)\right)\xi_{\Lambda},\\
    &\left[\ell^{\left(n-1\right)}_j,{\left(\Delta_{r}\right)_{1,..., n-2,n-1}^{1,..., n-2,n}\left(T\left(\ell^{\left(n-1\right)}_i\right)\right)}\right]\xi_{\Lambda}=-\delta_{ij}{\left(\Delta_{r}\right)_{1,..., n-2,n-1}^{1,..., n-2,n}\left(T\left(\ell^{\left(n-1\right)}_i\right)\right)}\xi_{\Lambda}.
\end{align*}
Recall that 
\begin{align*}
    &\alpha^{\left(n-1\right)}_i=\frac{(-1)^{i+n-1}}{\sqrt{\left(D_{h,n-1}\right)_i}}\left(\Delta_{l}\right)_{1, ..., n-2, n}^{1, ..., n-2, n-1}\left(T\left(\ell^{\left(n-1\right)}_i\right)\right),\\
    &\beta^{\left(n-1\right)}_i=(-1)^{i+n-1}{\left(\Delta_{r}\right)_{1,..., n-2,n-1}^{1,..., n-2,n}\left(T\left(\ell^{\left(n-1\right)}_i\right)\right)}\frac{1}{\sqrt{\left(D_{h,n-1}\right)_i}},
\end{align*} 
and $\ell^{\left(n-1\right)}_j$ commutes with $\frac{1}{\sqrt{\left(D_{h,n-1}\right)_i}}$, so we have the commutation relations in Lemma \ref{commu rels}.
\end{proof}
\begin{proof}[Proof of Proposition \ref{qsolthm}]
The series expansion of the matrix function \eqref{quantum diagFz} is \begin{equation}\label{quantum seriesexp}
    F_h\left(z;E_n,T_{n-1}\right)=\sum_{m=0}^{\infty} L_{h,m} z^{m} z^{\frac{h}{2 \pi \mathrm{i} } T_{n}},
\end{equation}
where $L_{h,m}$ is the coefficient of the expansion of $Y_h H_h\left(z\right)=\sum L_{h,m} z^m$. Following Definition \ref{ghf}, the 
$n\times n$ matrix $L_{h,m}$ has entries
        \begin{align}\label{qLM1}
            &\left(L_{h,m}\right)_{i j}=\alpha^{\left(n-1\right)}_i \frac{h^m\mathrm{i} ^m}{m!}\frac{1}{\left(\ell^{\left(n\right)}_j-\ell^{\left(n-1\right)}_i\right)}\frac{\prod_{l=1}^{n-1}\left(\alpha_{i j,l}\right)_m}{\prod_{l=1,l\ne j}^{n}\left(\beta_{ij,l}\right)_m}, \ \ 1\leq i \leq n-1, \ \  1\leq j \leq n,\\ \label{qLM2}
            &\left(L_{h,m}\right)_{nj}=\frac{h^m \mathrm{i} ^m}{m!}\frac{\prod_{l=1}^{n-1}\left(\alpha_{n j,l}\right)_m}{\prod_{l=1,l\ne j}^{n}\left(\beta_{nj,l}\right)_m},
        \end{align}
Let us verify that \eqref{quantum seriesexp}
is indeed a series solution of the system \eqref{qn-1sys}. Plugging \eqref{quantum seriesexp}
into \eqref{qn-1sys} and comparing the coefficients, we only need to verify the recursion relation
\begin{align}\label{qL0 requirement}
L_{h,0} T_{n}&=T_{n-1} L_{h,0}, \\
\label{qrecur}\allowdisplaybreaks
    m L_{h,m}+\frac{h}{2\pi\mathrm{i} }L_{h,m}T_{n}&=h\cdot \mathrm{i} \cdot E_n L_{h,m-1}+\frac{h}{2\pi \mathrm{i} } T_{n-1} L_{h,m}, \ \ m\ge 1.
\end{align}
The matrix identity \eqref{qL0 requirement} can be verified directly via Lemma \ref{commu rels} and the explicit expression of
\begin{align*}
    L_{h,0}=\left(\begin{array}{ccc}
 \alpha_{1}^{\left(n-1\right)}\frac{1}{\ell_{1}^{\left(n\right)}-\ell_{1}^{\left(n-1\right)}} & \cdots &  \alpha_{1}^{\left(n-1\right)}\frac{1}{\ell_{n}^{\left(n\right)}-\ell_{1}^{\left(n-1\right)}} \\
\vdots & & \vdots \\
 \alpha_{n-1}^{\left(n-1\right)}\frac{1}{\ell_{1}^{\left(n\right)}-\ell_{n-1}^{\left(n-1\right)}} & \cdots & \alpha_{n-1}^{\left(n-1\right)}\frac{1 }{\ell_{n}^{\left(n\right)}-\ell_{n-1}^{\left(n-1\right)}} \\
1 & \cdots & 1
\end{array}\right).
\end{align*}
On the one hand, the first $n-1$ rows of the matrix identity \eqref{qrecur}, for $1\leq i\leq n-1,\, 1\leq j \leq n$, are just
\begin{align}\label{qfirst n-1 row}
    \left(m+\frac{h}{2\pi \mathrm{i} }\left(\ell^{\left(n\right)}_j+n-1\right)\right)\left(L_{h,m}\right)_{i j}=\frac{h}{2\pi \mathrm{i} }\left(\ell^{\left(n-1\right)}_i+n-2\right) \left(L_{h,m}\right)_{i j}+\frac{h}{2\pi \mathrm{i} }\alpha^{\left(n-1\right)}_i \left(L_{h,m}\right)_{n j}.
\end{align}
Replacing $\left(L_{h,m}\right)_{ij}$, $\left(L_{h,m}\right)_{nj}$ by \eqref{qLM1}, \eqref{qLM2}, one checks that \eqref{qfirst n-1 row} is indeed an identity. The last row of the matrix identity \eqref{qrecur} is 
\begin{align}\label{qlast row}
    \left(m+\frac{h}{2\pi \mathrm{i} }\left(\ell^{\left(n\right)}_j+n-1\right)\right)\left(L_{h,m}\right)_{n j}=h\cdot \mathrm{i} \cdot \left(L_{h,m-1}\right)_{n j}+\frac{h}{2\pi \mathrm{i} }\sum_{k=1}^{n-1}\beta^{\left(n-1\right)}_k \left(L_{h,m}\right)_{k j}+\frac{h}{2\pi \mathrm{i} } e_{n n}\left(L_{h,m}\right)_{n j}.
\end{align}
To verify \eqref{qlast row}, by
substituting \eqref{qfirst n-1 row} into \eqref{qlast row}, we should prove that for $1\leq j \leq n$,
\begin{align*}
    \left(m+\frac{h\left(\ell^{\left(n\right)}_j+n-1\right)}{2\pi \mathrm{i} }-\sum_{k=1}^{n-1}\frac{h \beta^{\left(n-1\right)}_k}{2\pi \mathrm{i} } \frac{1}{m+\frac{h}{2\pi \mathrm{i} }\left(\ell^{\left(n\right)}_j-\ell^{\left(n-1\right)}_k+1\right)}\frac{h \alpha^{\left(n-1\right)}_k}{2\pi \mathrm{i} }-\frac{he_{n n}}{2\pi \mathrm{i} }\right) \left(L_{h,m}\right)_{n j}= h \mathrm{i}   \left(L_{h,m-1}\right)_{n j},
\end{align*}
and this can be verified directly in a similar way as \eqref{substitute first n-1 rows into the last row} by Proposition \ref{quantum ciq}. 

\end{proof}

Similar to Proposition \ref{qsol}, there exist solutions $F_{h,0}\left(z;E_n,T_{n-1}\right)$ and $F_{h,-1}\left(z;E_n,T_{n-1}\right)$ of the system \eqref{qn-1sys} such that for $i=0,-1$
\begin{equation*}
F_{h,i}\left(z;E_n,T_{n-1}\right)\cdot \mathrm{e}^{-{h\mathrm{i} E_nz}}z^{-\frac{h \delta_{n-1}\left(T_{n-1}\right)}{2\pi\mathrm{i}}} \sim \mathrm{Id},\quad z \rightarrow \infty \text{ in $\operatorname{Sect}_i$},
\end{equation*}
where
\[\delta_{n-1}\left(T_{n-1}\right)=\operatorname{diag}\left({\ell}^{\left(n-1\right)}_1+n-2,{\ell}^{\left(n-1\right)}_2+n-2,\cdots,{\ell}^{\left(n-1\right)}_{n-1}+n-2, e_{nn}\right).\] 
\begin{proposition}\label{6}
The transition matrices $U_{h,0}$ and $U_{h,-1}$, determined by
\begin{align}
&F_{h}\left(z;E_n,T_{n-1}\right) =F_{h,0}\left(z;E_n,T_{n-1}\right) \cdot Y_h U_{h,0},\quad z\in \operatorname{Sect}_{0},\label{qU0}\\
&F_{h}\left(z;E_n,T_{n-1}\right)=F_{h,-1}\left(z;E_n,T_{n-1}\right) \cdot Y_h U_{h,-1},\quad z\in \operatorname{Sect}_{-1}\label{qU1},
\end{align}
are given as follows, for $1\leq i \leq n-1$, and $1\leq j \leq n$,
\begin{align*}
    &\left(U_{h,0}\right)_{i j}=\frac{1}{\ell^{\left(n\right)}_j-\ell^{\left(n-1\right)}_i}\frac{\prod_{l=1,l\ne j}^{n} \Gamma\left(\beta_{ij,l}\right)}{\prod_{l=1,l\ne i}^{n-1} \Gamma\left(\alpha_{ij,l}\right)}\frac{\prod_{l=1,l\ne i}^{n-1} \Gamma\left(\alpha_{ij,l}-\alpha_{ij,i}\right)}{\prod_{l=1,l\ne j}^{n}\Gamma\left(\beta_{ij,l}-\alpha_{ij,i}\right)}\frac{1}{\left(h\mathrm{e}^{-\frac{\pi \mathrm{i} }{2}}\right)^{\alpha_{ij,i}}},\\
&\left(U_{h,-1}\right)_{i j}=\frac{1}{\ell^{\left(n\right)}_j-\ell^{\left(n-1\right)}_i}\frac{\prod_{l=1,l\ne j}^{n} \Gamma\left(\beta_{ij,l}\right)}{\prod_{l=1,l\ne i}^{n-1} \Gamma\left(\alpha_{ij,l}\right)}\frac{\prod_{l=1,l\ne i}^{n-1} \Gamma\left(\alpha_{ij,l}-\alpha_{ij,i}\right)}{\prod_{l=1,l\ne j}^{n}\Gamma\left(\beta_{ij,l}-\alpha_{ij,i}\right)}\frac{1}{\left(h\mathrm{e}^{\frac{3\pi \mathrm{i} }{2}}\right)^{\alpha_{ij,i}}},
\end{align*}
and for $1\leq j\leq n$,
\begin{align*}
\left(U_{h,0}\right)_{nj}=\left(U_{h,-1}\right)_{nj}=\frac{\prod_{l=1,l \ne j}^{n} \Gamma\left(\beta_{nj,l}\right)}{\prod_{l=1}^{n-1}\Gamma\left(\alpha_{nj,l}\right)}\frac{1}{\left(h\mathrm{e}^{\frac{\pi \mathrm{i} }{2}}\right)^{\alpha_{nj,n}}},
\end{align*}
where for convenience, we denote
\begin{align*}
&\ell^{\left(n-1\right)}_n:=e_{n n}-\left(n-1\right)
=\sum_{l=1}^{n}\ell^{\left(n\right)}_l-\sum_{l=1}^{n-1}\ell^{\left(n-1\right)}_l,\\
    &\alpha_{n j,n}:=\sum_{l=1,l\ne j}^{n}\beta_{nj,l}-\sum_{l=1}^{n-1}\alpha_{nj,l}=\frac{h}{2\pi \mathrm{i} }\left(\ell^{\left(n\right)}_j-\ell^{\left(n-1\right)}_n\right),
    \quad  1\leq j \leq n.
\end{align*}
\end{proposition}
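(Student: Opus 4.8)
The plan is to mirror the proof of Proposition \ref{mainpro} exactly, since the quantum system \eqref{qn-1sys} and the classical system \eqref{classical new equation} have the identical block structure; the only changes are that eigenvalues $\lambda^{(\bullet)}$ are replaced by the commuting operators $\ell^{(\bullet)}$, the scalar $\mathrm{i}$ by $h\mathrm{i}$, and all $\Gamma$, $\,_{n-1}F_{n-1}$, and power functions are interpreted entrywise on the Gelfand-Tsetlin basis as in \eqref{matrixgamma}. First I would record that, by Proposition \ref{qsolthm}, $F_h(z;E_n,T_{n-1})$ and the canonical solutions $F_{h,0}$, $F_{h,-1}$ are all solutions of \eqref{qn-1sys}, so the transition matrices $U_{h,0}$, $U_{h,-1}$ defined in \eqref{qU0}, \eqref{qU1} exist; here I must be slightly careful because the entries of $T_{n-1}$ do not all commute (Lemma \ref{commu rels}), but since $\ell^{(n-1)}_j$, $\ell^{(n)}_j$, $e_{nn}$ are simultaneously diagonal in the Gelfand-Tsetlin basis and $\alpha^{(n-1)}_k$, $\beta^{(n-1)}_k$ are ``shift'' operators, the computation reduces, basis-vector by basis-vector, to the classical scalar computation with the eigenvalues of $\ell^{(\bullet)}$ substituted — I would make this reduction explicit at the outset so that everything downstream is literally the classical argument.

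Next I would carry over the recursion for the coefficients $K_{h,m}$ of the formal solution $\hat F_h(z;E_n,T_{n-1})=(\mathrm{Id}+\sum K_{h,m}z^{-m})z^{h\delta_{n-1}(T_{n-1})/(2\pi\mathrm{i})}\mathrm{e}^{h\mathrm{i} E_n z}$, namely $[K_{h,m+1},h\mathrm{i} E_n]=\tfrac{h}{2\pi\mathrm{i}}T_{n-1}K_{h,m}-K_{h,m}(\tfrac{h}{2\pi\mathrm{i}}\delta_{n-1}(T_{n-1})+m)$, and extract the leading coefficient $K_{h,1}$ entrywise, exactly as in \eqref{F1}--\eqref{Fn}; the nonvanishing of $\alpha^{(n-1)}_i$ on $L(\lambda)_0$ (from Lemma \ref{4}) plays the role of $a^{(n-1)}_i\ne 0$. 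Then, as in \eqref{Fij asymptotic behavior}--\eqref{Fnj asymptotic behavior}, I would write out the asymptotic form of the entries of $F_h(z;E_n,T_{n-1})$ in $\mathrm{Sect}_0$ in terms of the unknown entries of $U_{h,0}$. Independently, I would compute the same asymptotics from Proposition \ref{qsolthm} using Lemma \ref{nfn} applied entrywise: since $\mathrm{Sect}_0=S(-\pi/2,3\pi/2)$ one uses \eqref{asy 0 pi}, and the argument of the variable is $h\mathrm{i} z=h\mathrm{e}^{\pi\mathrm{i}/2}z$, which is why the factor $(h\mathrm{e}^{-\pi\mathrm{i}/2})^{-\alpha_{ij,i}}$ (rather than just $\mathrm{e}^{-\pi\mathrm{i} \alpha_{ij,i}/2}$ as in the classical case) appears — this is the one genuinely new bookkeeping point, tracking the scalar $h$ inside the power functions. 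Matching the two asymptotic expansions coefficient-by-coefficient, exactly as in the two bulleted systems of equations in the proof of Proposition \ref{mainpro}, and using the explicit $K_{h,1}$, determines every $(U_{h,0})_{kj}$ and $(U_{h,0})_{nj}$. For $U_{h,-1}$ one repeats with $\mathrm{Sect}_{-1}=S(-3\pi/2,\pi/2)$ and \eqref{asy -pi 0}, giving the factor $(h\mathrm{e}^{3\pi\mathrm{i}/2})^{-\alpha_{ij,i}}$.

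The main obstacle — really the only one — is to make the entrywise/diagonalization reduction airtight: one must check that all the quantities appearing (the arguments $\alpha_{ij,l}$, $\beta_{ij,l}$ of the hypergeometric functions and their $\Gamma$-values, the exponents $\ell^{(\bullet)}_i/(2\pi\mathrm{i})$ of the power functions, and $K_{h,1}$) are built only from the mutually commuting diagonal operators $\ell^{(n-1)}_i,\ell^{(n)}_j,e_{nn}$, so that "the asymptotics of a matrix-valued solution" literally means "apply Lemma \ref{nfn} to each diagonal entry", and that the noncommuting shift operators $\alpha^{(n-1)}_k,\beta^{(n-1)}_k$ enter only through the matrix $Y_h$ and $K_{h,1}$ in exactly the same combinatorial positions as $a^{(n-1)}_k,b^{(n-1)}_k$ did classically. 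Once this is set up, every algebraic identity needed (the partial-fraction identity, Euler reflection, etc.) is the one already used in Section \ref{sol and stokes}, now read with $\ell$'s in place of $\lambda$'s, and the proof is a transcription. I would close by remarking, as in the Remark after Proposition \ref{mainpro}, that the last row of $F_h(z;E_n,T_{n-1})$ already pins down all of $U_{h,0}$.
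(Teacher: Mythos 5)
Your proposal follows the paper's own route: extract $K_{h,1}$ from the formal-solution recursion, express the $\operatorname{Sect}_0$ and $\operatorname{Sect}_{-1}$ asymptotics of $F_h\left(z;E_n,T_{n-1}\right)$ in terms of the unknown $U_{h,0}$, $U_{h,-1}$, compute the same asymptotics from Proposition \ref{qsolthm} via Lemma \ref{nfn} applied to the variable $h\mathrm{i}z$ (whence the factor $\left(h\mathrm{e}^{-\frac{\pi\mathrm{i}}{2}}\right)^{\alpha_{ij,i}}$), and match coefficients, handling the noncommutativity exactly as the paper does, namely through the commutation relations of Lemma \ref{commu rels} (your ``shift operator'' reduction is the same device used there to move the powers $z^{\frac{h}{2\pi\mathrm{i}}(\ell^{(n-1)}_k+\cdots)}$ past $\alpha^{(n-1)}_k$ and the entries of $K_{h,1}$). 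One small correction that does not affect the argument: $\operatorname{Sect}_0=S\left(-\pi,\pi\right)$, not $S\left(-\frac{\pi}{2},\frac{3\pi}{2}\right)$; the point is that for $z\in\operatorname{Sect}_0$ the argument of $h\mathrm{i}z$ lies in $\left(-\frac{\pi}{2},\frac{3\pi}{2}\right)$, which is why \eqref{asy 0 pi} applies.
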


\begin{proof}[Proof of Proposition \ref{6}]
Let $\hat{F}_h\left(z;E_n,T_{n-1}\right)$ be the formal solution of the form
\begin{eqnarray}\label{qn-1syst formalsum}
\hat{F}_h\left(z;E_n,T_{n-1}\right)=\hat{K}_h\left(z\right) e^{{h\mathrm{i} E_nz}}z^{\frac{h\delta_{n-1}\left(T_{n-1}\right)}{2\pi\mathrm{i}}}, \ \ \ {\it for} \ \hat{K}_h=\mathrm{Id}+K_{h,1}z^{-1}+K_{h,2}z^{-2}+\cdot\cdot\cdot. \end{eqnarray}
    From the asymptotics of $F_{h,0}\left(z;E_n,T_{n-1}\right)$ within ${\rm Sect}_0$ and the identity \eqref{qU0}, we have
\begin{equation}\label{Fh Uinverse}
    F_{h}\left(z;E_n,T_{n-1}\right) U_{h,0}^{-1} Y_h^{-1} \cdot e^{-{h\mathrm{i} E_nz}}z^{-\frac{h \delta_{n-1}\left(T_{n-1}\right)}{2\pi\mathrm{i}}} \sim \hat{K}_h\left(z\right),\quad z\rightarrow \infty \text{ within } \operatorname{Sect}_0.
\end{equation} 
Here for $1\le i\neq  k \le n-1,$ we have
\begin{align}\label{qF1}
    &\left(K_{h,1}\right)_{i k}=-\mathrm{i} \alpha^{\left(n-1\right)}_i\frac{\prod_{l=1}^{n}\frac{h}{2\pi \mathrm{i} }\left(\ell^{\left(n-1\right)}_k-\ell^{\left(n\right)}_l\right)}{h\prod^n_{l=1,\, l\ne k}\frac{h}{2\pi \mathrm{i} }\left(\ell^{\left(n-1\right)}_k-\ell^{\left(n-1\right)}_l\right)\left(\frac{h}{2\pi \mathrm{i} }\left(\ell^{\left(n-1\right)}_k-\ell^{\left(n-1\right)}_i\right)-1\right)}\left(\alpha^{\left(n-1\right)}_k\right)^{-1},\\ 
    \label{qF2}
    &\left(K_{h,1}\right)_{n k}=\frac{2\pi}{h^2}\frac{\prod^{n}_{l=1}\frac{h}{2\pi \mathrm{i} }\left(\ell^{\left(n-1\right)}_k-\ell^{\left(n\right)}_l\right)}{\prod^{n-1}_{l=1,l\ne k}\frac{h}{2\pi \mathrm{i} }\left(\ell^{\left(n-1\right)}_k-\ell^{\left(n-1\right)}_l\right)}\left(\alpha^{\left(n-1\right)}_k\right)^{-1},\\
    \label{qFn}
    &\left(K_{h,1}\right)_{i n}=-\frac{\alpha^{\left(n-1\right)}_i}{2\pi},
\end{align}
which follow from the recursion of the coefficients of formal fundamental solution
\begin{align*}
    \left[K_{h,m+1},h\mathrm{i} E_n\right]=\frac{h}{2\pi \mathrm{i} }T_{n-1}K_{h,m}+K_{h,m}\left(-\frac{h}{2\pi \mathrm{i} }\delta_{n-1}\left(T_{n-1}\right)+m\right).
\end{align*}
Similar to Proposition \ref{mainpro}, \eqref{Fh Uinverse} in turn implies, for $1\leq i \leq n-1$ and $1\leq j \leq n$, when $z\rightarrow \infty$ in $\operatorname{Sect}_0$,
\begin{equation}
\begin{aligned}\label{Fh entry asym1}
    F_h\left(z;E_n,T_{n-1}\right)_{i j}\sim &\sum_{k=1,k\ne i}^{n-1}z^{\frac{h}{2\pi \mathrm{i} }\left(\ell^{\left(n-1\right)}_k+n-1\right)-1}\left(K_{h,1}\right)_{i k} \alpha^{\left(n-1\right)}_k\left(U_{h,0}\right)_{k j}\\
    &+z^{\frac{h}{2\pi \mathrm{i} }\left(\ell^{\left(n-1\right)}_i+n-2\right)}\alpha^{\left(n-1\right)}_i\left(U_{h,0}\right)_{i j}\\
    &+z^{\frac{h}{2\pi \mathrm{i} }\left(e_{n n}+1\right)-1}\operatorname{e}^{h\mathrm{i} z}\left(K_{h,1}\right)_{in} \left(U_{h,0}\right)_{nj}
\end{aligned}
\end{equation}
and
\begin{align}
    F_h\left(z;E_n,T_{n-1}\right)_{n j}\sim \sum_{k=1}^{n-1}z^{\frac{h}{2\pi \mathrm{i} }\left(\ell^{\left(n-1\right)}_k+n-1\right)-1} \left(K_{h,1}\right)_{n k} \alpha^{\left(n-1\right)}_k\left(U_{h,0}\right)_{k j}+z^{\frac{h}{2\pi \mathrm{i} }e_{n n}}\operatorname{e}^{h\mathrm{i} z} \left(U_{h,0}\right)_{nj}\label{Fh entry asym2},
\end{align}
where we use the following commutation relations derived from Lemma \ref{commu rels} to move power functions to the front
\begin{align*}
    z^{\frac{h}{2\pi \mathrm{i} }\left(\ell^{\left(n-1\right)}_k+n-1\right)-1}\left(K_{h,1}\right)_{i k}&=\left(K_{h,1}\right)_{i k} z^{\frac{h}{2\pi \mathrm{i} }\left(\ell^{\left(n-1\right)}_k+n-2\right)-1},\\
    z^{\frac{h}{2\pi \mathrm{i} }\left(\ell^{\left(n-1\right)}_k+n-1\right)-1}\left(K_{h,1}\right)_{n k}&=\left(K_{h,1}\right)_{n k}z^{\frac{h}{2\pi \mathrm{i} }\left(\ell^{\left(n-1\right)}_k+n-2\right)-1},\\
    z^{\frac{h}{2\pi \mathrm{i} }\left(e_{n n}+1\right)-1}\left(K_{h,1}\right)_{in}&=\left(K_{h,1}\right)_{in}z^{\frac{h}{2\pi \mathrm{i} }e_{n n}-1}.
\end{align*}

To get the entries of $U_{h,0}$, we only need to study the asymptotics of the solution $F_{h}\left(z;E_n,T_{n-1}\right)$.
From the asymptotic behavior \eqref{asy 0 pi} in Lemma \ref{nfn}, 
 we get as $z\rightarrow\infty$ within $\mathrm{Sect}_0$, for $1\leq i \leq n-1$, $1\leq j \leq n$,
\begin{equation}
\begin{aligned}
F_h\left(z;E_n,T_{n-1}\right)_{i j} \sim& \sum_{k=1,k\neq i}^{n-1}z^{\frac{h}{2\pi \mathrm{i}}\left(\ell^{(n-1)}_k+n-1\right)-1}\alpha^{(n-1)}_i\frac{C_{i j,k}}{\left(\ell^{\left(n\right)}_j-\ell^{\left(n-1\right)}_i\right)\cdot \left(h\mathrm{e}^{-\frac{\pi \mathrm{i} }{2}}\right)^{\alpha_{ij,k}}} \\
&+z^{\frac{h}{2\pi \mathrm{i}}\left(\ell^{(n-1)}_i+n-2\right)}\alpha^{(n-1)}_i\frac{C_{i j,k}}{\left(\ell^{\left(n\right)}_j-\ell^{\left(n-1\right)}_i\right)\cdot \left(h\mathrm{e}^{-\frac{\pi \mathrm{i} }{2}}\right)^{\alpha_{ij,k}}}
\\
&+z^{\frac{h}{2\pi \mathrm{i}}\left(\ell^{(n-1)}_n+n\right)-1} \mathrm{e}^{h\cdot\mathrm{i} z} \alpha^{(n-1)}_i \frac{C_{i j,n}}{\left(\ell^{\left(n\right)}_j-\ell^{\left(n-1\right)}_i\right)\cdot \left(h\mathrm{e}^{\frac{\pi \mathrm{i} }{2}}\right)^{\alpha_{ij,n}}} , \label{qasym move1} 
\end{aligned}
\end{equation}
and
\begin{align}
    &F_h\left(z;E_n,T_{n-1}\right)_{n j} \sim  \sum_{k=1}^{n-1} z^{\frac{h}{2\pi \mathrm{i}}\left(\ell^{(n-1)}_k+n-1\right)-1} \frac{C_{n j,k}}{\left(h\mathrm{e}^{-\frac{\pi \mathrm{i} }{2}}\right)^{\alpha_{nj,k}}} +z^{\frac{h}{2\pi \mathrm{i}}\left(\ell^{(n-1)}_n+n-1\right)}\mathrm{e}^{h\cdot \mathrm{i} z} \frac{C_{n j,n}}{\left(h\mathrm{e}^{\frac{\pi \mathrm{i} }{2}}\right)^{\alpha_{nj,n}}} .\label{qasym move2}
\end{align}
where the constants $C_{i j,k}$, for $1\leq i,j \leq n$, are
\begin{align*}
    &C_{i j,k}=\frac{\prod_{l=1,l\ne j}^{n} \Gamma\left(\beta_{ij,l}\right)}{\prod_{l=1,l\ne k}^{n-1} \Gamma\left(\alpha_{ij,l}\right)}\frac{\prod_{l=1,l\ne k}^{n-1} \Gamma\left(\alpha_{ij,l}-\alpha_{ij,k}\right)}{\prod_{l=1,l\ne j}^{n}\Gamma\left(\beta_{ij,l}-\alpha_{ij,k}\right)}, \ \ 1\leq k \leq n-1, \\
    &C_{i j,n}=\frac{\prod_{l=1,l \ne j}^{n} \Gamma\left(\beta_{ij,l}\right)}{\prod_{l=1}^{n-1}\Gamma\left(\alpha_{ij,l}\right)},
\end{align*}
and $\alpha_{i j,n}$, for $1\leq i \leq n-1$, $1\leq j \leq n$, are
\begin{equation*}
    \alpha_{i j,n}:=\sum_{l=1,l\ne j}^{n}\beta_{ij,l}-\sum_{l=1}^{n-1}\alpha_{ij,l}
    =1+\frac{h}{2\pi \mathrm{i} }\left(\ell^{\left(n\right)}_j-\ell^{\left(n-1\right)}_n\right).
\end{equation*}
In \eqref{qasym move1}, the following commutation relations derived from Lemma \ref{commu rels} in used to move power functions to the front
\begin{align*}
    &z^{\frac{h}{2\pi \mathrm{i}}\left(\ell^{(n-1)}_i+n-2\right)}\alpha^{(n-1)}_i=\alpha^{(n-1)}_i z^{\frac{h}{2\pi \mathrm{i}}\left(\ell^{(n-1)}_i+n-1\right)},\\
    &z^{\frac{h}{2\pi \mathrm{i}}\left(\ell^{(n-1)}_n+n\right)-1}  \alpha^{(n-1)}_i=\alpha^{(n-1)}_i z^{\frac{h}{2\pi \mathrm{i}}\left(\ell^{(n-1)}_n+n-1\right)-1}.
\end{align*}
Comparing the coefficients in the asymptotic behaviors \eqref{Fh entry asym1}, \eqref{Fh entry asym2}, \eqref{qasym move1} and \eqref{qasym move2},
we can determine $U_{h,0}$ uniquely as follows. 
\begin{itemize}
    \item For $1\leq k \leq n-1$ and $1\leq j \leq n$, we find that $\left(U_{h,0}\right)_{k j}$ satisfies the following $n$ equations
\begin{align*}
    \alpha^{\left(n-1\right)}_i \frac{1}{\ell^{\left(n\right)}_j-\ell^{\left(n-1\right)}_i}C_{i j, k}\frac{1}{h^{\alpha_{ij,k}}}\mathrm{e}^{\frac{\pi \mathrm{i} }{2}\alpha_{ij,k}}&=\left(K_{h,1}\right)_{i k}  
    \alpha^{\left(n-1\right)}_k \left(U_{h,0}\right)_{k j}, \quad 1\leq i\neq k \leq n-1,\\
     \alpha^{\left(n-1\right)}_k \frac{1}{\ell^{\left(n\right)}_j-\ell^{\left(n-1\right)}_k}C_{k j, k}\mathrm{e}^{\frac{\pi \mathrm{i} }{2}\alpha_{kj,k}}&= \alpha^{\left(n-1\right)}_k \left(U_{h,0}\right)_{k j},\\
     C_{nj,k}\frac{1}{h^{\alpha_{nj,k}}}\mathrm{e}^{\frac{\pi \mathrm{i} }{2}\alpha_{nj,k}}& =\left(K_{h,1}\right)_{n k} \alpha^{\left(n-1\right)}_k \left(U_{h,0}\right)_{k j},
\end{align*}
which imply that (using identities \eqref{qF1} and \eqref{qF2})
\begin{align*}
    \left(U_{h,0}\right)_{k j}=\frac{1}{\ell^{\left(n\right)}_j-\ell^{\left(n-1\right)}_k}\frac{\prod_{l=1,l\ne j}^{n} \Gamma\left(\beta_{kj,l}\right)}{\prod_{l=1,l\ne k}^{n-1} \Gamma\left(\alpha_{kj,l}\right)}\frac{\prod_{l=1,l\ne k}^{n-1} \Gamma\left(\alpha_{kj,l}-\alpha_{kj,k}\right)}{\prod_{l=1,l\ne j}^{n}\Gamma\left(\beta_{kj,l}-\alpha_{kj,k}\right)}\frac{1}{\left(h\mathrm{e}^{-\frac{\pi \mathrm{i} }{2}}\right)^{\alpha_{kj,k}}}.
\end{align*}
\item For $1\leq j \leq n$, we find that $\left(U_{h,0}\right)_{n j}$ satisfies the following $n$ equations
\begin{align*}
    \alpha^{\left(n-1\right)}_i \frac{1}{\ell^{\left(n\right)}_j-\ell^{\left(n-1\right)}_i}C_{i j,n}\frac{1}{h^{\alpha_{ij,n}}} \mathrm{e}^{-\frac{\pi \mathrm{i} }{2}\alpha_{ij,n}}=\left(K_{h,1}\right)_{i n} \left(U_{h,0}\right)_{nj}, \quad 1\leq i \leq n-1,
\end{align*}
and
\begin{align*}
    C_{n j,n}\frac{1}{h^{\alpha_{n j,n}}}\mathrm{e}^{-\frac{\pi \mathrm{i} }{2}\alpha_{n j,n}}=\left(U_{h,0}\right)_{n j},
\end{align*}
which imply that (using the identity \eqref{qFn})
\begin{align*}
    \left(U_{h,0}\right)_{nj}=\frac{\prod_{l=1,l \ne j}^{n} \Gamma\left(\beta_{nj,l}\right)}{\prod_{l=1}^{n-1}\Gamma\left(\alpha_{nj,l}\right)}\frac{1}{\left(h\mathrm{e}^{\frac{\pi \mathrm{i} }{2}}\right)^{\alpha_{nj,n}}}.
\end{align*}
\end{itemize}
We can deduce $U_{h,-1}$ in the same manner as $U_{h,0}$ using \eqref{asy -pi 0} in Lemma \ref{nfn}.
\end{proof}
\begin{rmk}
    Here we note that the main difference between the proof of Proposition \ref{mainpro} and \ref{6} is that we need to consider the commutation relations of Lemma \ref{commu rels} in the proof of Proposition \ref{6}.
\end{rmk}
\begin{thm}\label{q9}
The Stokes matrix $S_{h+}\left(E_n,T_{n-1}\right)$ of the system \eqref{qn-1sys} is upper triangular with the form
\begin{equation}\label{explicit Sh+}
S_{h+}\left(E_{n},T_{n-1}\right)=\left(\begin{array}{cc}
    \mathrm{e}^{-\frac{h T^{\left(n-1\right)}_{n-1}}{2}} & b_{h+}^{\left(n\right)}  \\
    0 & \mathrm{e}^{-\frac{h e_{nn}}{2}}
  \end{array}\right),
  \end{equation}
where $T^{\left(n-1\right)}_{n-1}=\mathrm{diag}\left(\ell^{\left(n-1\right)}_1+n-2,...,\ell^{\left(n-1\right)}_{n-1}+n-2\right)$, $b^{\left(n\right)}_{h+}=\left(\big(b^{(n)}_{h+}\big)_1,...,\big(b^{(n)}_{h+}\big)_{n-1}\right)^{\intercal}$ and
\begin{align*}
\big(b^{(n)}_{h+}\big)_{j }=
-\frac{\left(h \mathrm{e}^{-\frac{\pi \mathrm{i}}{2}}\right)^{\frac{h}{2\pi \mathrm{i}}\left(\ell^{\left(n-1\right)}_j+n-2\right)}}{\left(h \mathrm{e}^{\frac{\pi \mathrm{i}}{2}}\right)^{\frac{h}{2\pi \mathrm{i}}\left(e_{n n}+1\right)}} \frac{\prod_{l=1,l\neq j}^{n-1} \Gamma\left(1+\frac{h}{2\pi \mathrm{i} }\left(\ell^{\left(n-1\right)}_j-1-\ell^{\left(n-1\right)}_l\right)\right)}{\prod_{l=1}^{n}\Gamma\left(1+\frac{h}{2\pi \mathrm{i} }\left(\ell^{\left(n-1\right)}_j-1-\ell^{\left(n\right)}_l\right)\right)} h \alpha^{(n-1)}_j.
\end{align*}
Similarly, the matrix $S_{h-}\left(E_n,T_{n-1}\right)$ is lower triangular and ignored here.
\end{thm}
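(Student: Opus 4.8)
The plan is to follow the classical Proposition \ref{S+ diag sys} line by line, the only genuinely new feature being the operator non-commutativity, which is handled through Lemma \ref{commu rels}. First I would combine the defining relation of $S_{h+}(E_n,T_{n-1})$ of the system \eqref{qn-1sys} — that on the sector $z\in S(-\pi,0)$ one has $F_{h,-1}(z;E_n,T_{n-1})\cdot \mathrm{e}^{\frac{h\delta_{n-1}(T_{n-1})}{2}} S_{h+}(E_n,T_{n-1})=F_{h,0}(z;E_n,T_{n-1})$ — with the two transition identities \eqref{qU0} and \eqref{qU1} of Proposition \ref{6} (which express the single solution $F_h(z;E_n,T_{n-1})$ of Proposition \ref{qsolthm} through both $F_{h,0}$ and $F_{h,-1}$), eliminating $F_h$, $F_{h,0}$ and $F_{h,-1}$ and arriving at the purely algebraic identity
\[
S_{h+}(E_n,T_{n-1})\, Y_h\, U_{h,0}=\mathrm{e}^{-\frac{h\delta_{n-1}(T_{n-1})}{2}}\, Y_h\, U_{h,-1}.
\]
Exactly as in the classical case, the block upper-triangular shape \eqref{explicit Sh+}, with diagonal blocks $\mathrm{e}^{-hT^{(n-1)}_{n-1}/2}$ and $\mathrm{e}^{-he_{nn}/2}$ — equivalently, $\mathrm{e}^{h\delta_{n-1}(T_{n-1})/2}S_{h+}(E_n,T_{n-1})$ being a unipotent Stokes factor across the anti-Stokes ray $d_0$ — is built into the Stokes structure, so the only quantity left to compute is the column $b^{(n)}_{h+}$.

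Next I would substitute \eqref{explicit Sh+} and $Y_h=\mathrm{diag}(\alpha^{(n-1)}_1,\ldots,\alpha^{(n-1)}_{n-1},1)$ into this identity and read off entries. The $(n,k)$-entries merely reproduce $(U_{h,0})_{nk}=(U_{h,-1})_{nk}$ from Proposition \ref{6}, while for $1\le j\le n-1$ and $1\le k\le n$ the $(j,k)$-entry reads
\[
\mathrm{e}^{-\frac{h(\ell^{(n-1)}_j+n-2)}{2}}\alpha^{(n-1)}_j (U_{h,0})_{jk}+(b^{(n)}_{h+})_j (U_{h,0})_{nk}=\mathrm{e}^{-\frac{h(\ell^{(n-1)}_j+n-2)}{2}}\alpha^{(n-1)}_j (U_{h,-1})_{jk},
\]
so that
\[
(b^{(n)}_{h+})_j=\mathrm{e}^{-\frac{h(\ell^{(n-1)}_j+n-2)}{2}}\,\alpha^{(n-1)}_j\,\big((U_{h,-1})_{jk}-(U_{h,0})_{jk}\big)(U_{h,0})_{nk}^{-1}.
\]
In the Gelfand--Tsetlin basis every $U_{h,0}$ and $U_{h,-1}$ is a diagonal operator (a function of the mutually commuting operators $\ell^{(m)}_l$ and $e_{nn}$), so these products and inverses are to be read entrywise, and, as in the classical proof, the right-hand side will turn out to be independent of $k$, which a priori it must be.

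Then I would carry out the scalar simplification, using the explicit formulas of Proposition \ref{6} and the matrix gamma function \eqref{matrixgamma}: rewrite $(U_{h,0})_{nk}^{-1}$ by Euler's reflection formula $\Gamma(w)\Gamma(1-w)=\pi/\sin(\pi w)$ applied to $w=\frac{h}{2\pi\mathrm{i}}(\ell^{(n-1)}_j-\ell^{(n)}_k)$, and factor $(U_{h,-1})_{jk}-(U_{h,0})_{jk}$ so that its two terms differ only through $(h\mathrm{e}^{\frac{3\pi\mathrm{i}}{2}})^{-\alpha_{jk,j}}-(h\mathrm{e}^{-\frac{\pi\mathrm{i}}{2}})^{-\alpha_{jk,j}}=(h\mathrm{e}^{-\frac{\pi\mathrm{i}}{2}})^{-\alpha_{jk,j}}(-2\mathrm{i})\sin(\pi\alpha_{jk,j})$ by Euler's formula. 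The sine factors cancel, as do the bulk of the gamma products and every $\ell^{(n)}_k$-dependent piece — exactly as in the derivation of \eqref{U0 nk}--\eqref{U1-U0}; in particular the reflection-formula term supplies precisely the $l=k$ factor missing from the denominator product $\prod_{l=1}^{n}\Gamma(\,\cdot\,)$ of the statement. Finally I would move $\alpha^{(n-1)}_j$ to the far right of all surviving functions of $\ell^{(n-1)}_j$ using $f(\ell^{(n-1)}_j)\,\alpha^{(n-1)}_j=\alpha^{(n-1)}_j\,f(\ell^{(n-1)}_j+1)$ from Lemma \ref{commu rels} (and noting that $\alpha^{(n-1)}_j$ commutes with $e_{nn}$, with every $\ell^{(n)}_l$, and with $\ell^{(n-1)}_l$ for $l\ne j$): the unit shift $\ell^{(n-1)}_j\mapsto\ell^{(n-1)}_j-1$ this introduces inside the gamma arguments produces the ratio of gamma products in the statement, the surviving powers of $h\mathrm{e}^{\pm\frac{\pi\mathrm{i}}{2}}$ collect (together with the residual $\pi/\sin$ and $-2\mathrm{i}\sin$ giving an overall $-1$) into $-(h\mathrm{e}^{-\frac{\pi\mathrm{i}}{2}})^{\frac{h}{2\pi\mathrm{i}}(\ell^{(n-1)}_j+n-2)}(h\mathrm{e}^{\frac{\pi\mathrm{i}}{2}})^{-\frac{h}{2\pi\mathrm{i}}(e_{nn}+1)}$, and $h\alpha^{(n-1)}_j$ is left on the right. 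All these identities are applied entrywise in the Gelfand--Tsetlin basis, which is legitimate since the relevant arguments are diagonal operators and $h$ is real.

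The hard part will be precisely this last bookkeeping of the operator ordering: one must be scrupulous about which of the many gamma and exponential factors in $U_{h,0}$, $U_{h,-1}$ and in $\mathrm{e}^{-h(\ell^{(n-1)}_j+n-2)/2}$ genuinely involve $\ell^{(n-1)}_j$ (as opposed to the mutually commuting $\ell^{(n)}_l$, $\ell^{(n-1)}_l$ with $l\ne j$, and $e_{nn}$), and about the direction of the unit shift induced by $\alpha^{(n-1)}_j$, so that the $\ell^{(n-1)}_j-1$ inside the gammas and the $e_{nn}+1$ in the exponent come out with the correct signs; once the ordering is organized, the remaining manipulations are the same scalar gamma-function identities already used in the classical Proposition \ref{S+ diag sys}. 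The matrix $S_{h-}(E_n,T_{n-1})$ is obtained in the same manner and is omitted.
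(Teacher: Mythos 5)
Your proposal follows the paper's own proof essentially line by line: the same reduction to the algebraic identity $S_{h+}\left(E_n,T_{n-1}\right)Y_hU_{h,0}=\mathrm{e}^{-\frac{h\delta_{n-1}\left(T_{n-1}\right)}{2}}Y_hU_{h,-1}$ (this is exactly \eqref{Sh+ eq}), the same entrywise extraction of $\big(b^{(n)}_{h+}\big)_j$ from the $(j,k)$-entries, the same use of Euler's reflection formula and of $\mathrm{e}^{-\pi\mathrm{i}\alpha}-\mathrm{e}^{\pi\mathrm{i}\alpha}=-2\mathrm{i}\sin\left(\pi\alpha\right)$ as in \eqref{Uh0 nk}--\eqref{Uh1-Uh0}, and the same final step of pushing $\alpha^{(n-1)}_j$ to the right through the surviving gamma and power factors via the shift coming from Lemma \ref{commu rels}.

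There is, however, one concrete error in exactly the step you single out as the crux: $\alpha^{(n-1)}_j$ does \emph{not} commute with $e_{nn}$. By Proposition \ref{2}, $\left[e_{nn},\Delta^{1,\ldots,n-2,n-1}_{1,\ldots,n-2,n}\left(T\left(x\right)\right)\right]=-\Delta^{1,\ldots,n-2,n-1}_{1,\ldots,n-2,n}\left(T\left(x\right)\right)$, hence $\left[e_{nn},\alpha^{(n-1)}_j\right]=-\alpha^{(n-1)}_j$; equivalently, $\alpha^{(n-1)}_j$ maps $\xi_\Lambda$ to a multiple of $\xi_{\Lambda+\delta_{n-1,j}}$ while $e_{nn}$ acts by $\sum_l\lambda^{(n)}_l-\sum_l\lambda^{(n-1)}_l$, or one can use $\ell^{(n-1)}_n=e_{nn}-\left(n-1\right)=\sum_{l=1}^{n}\ell^{(n)}_l-\sum_{l=1}^{n-1}\ell^{(n-1)}_l$ together with Lemma \ref{commu rels}. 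So alongside $\alpha^{(n-1)}_jf\big(\ell^{(n-1)}_j\big)=f\big(\ell^{(n-1)}_j-1\big)\alpha^{(n-1)}_j$ you must also use $\alpha^{(n-1)}_jf\left(e_{nn}\right)=f\left(e_{nn}+1\right)\alpha^{(n-1)}_j$. This is not a cosmetic point: the only $e_{nn}$-dependent factor, $\big(h\mathrm{e}^{\frac{\pi\mathrm{i}}{2}}\big)^{\alpha_{nk,n}}$ coming from $(U_{h,0})_{nk}^{-1}$, stands to the \emph{right} of $\alpha^{(n-1)}_j$ in the expression for $\big(b^{(n)}_{h+}\big)_j$, and it is precisely the simultaneous shifts $\ell^{(n-1)}_j\mapsto\ell^{(n-1)}_j-1$, $e_{nn}\mapsto e_{nn}+1$ when $\alpha^{(n-1)}_j$ passes through $\big(h\mathrm{e}^{\frac{\pi\mathrm{i}}{2}}\big)^{\alpha_{nk,n}-\alpha_{jk,j}}$ that, after absorbing $\mathrm{e}^{-\frac{h\left(\ell^{(n-1)}_j+n-2\right)}{2}}$, produce the stated prefactor $\big(h\mathrm{e}^{-\frac{\pi\mathrm{i}}{2}}\big)^{\frac{h}{2\pi\mathrm{i}}\left(\ell^{(n-1)}_j+n-2\right)}\big(h\mathrm{e}^{\frac{\pi\mathrm{i}}{2}}\big)^{-\frac{h}{2\pi\mathrm{i}}\left(e_{nn}+1\right)}$. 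If you treat $e_{nn}$ as commuting with $\alpha^{(n-1)}_j$, the bookkeeping yields $e_{nn}$ in place of $e_{nn}+1$, i.e.\ an answer off by the factor $\big(h\mathrm{e}^{\frac{\pi\mathrm{i}}{2}}\big)^{\frac{h}{2\pi\mathrm{i}}}$. With this single correction your argument coincides with the paper's proof of Theorem \ref{q9}.
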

\begin{proof}
On the sector $z\in S\left(-\pi,0\right)$, we have $F_{h,-1}\left(z;E_n,T_{n-1}\right)\cdot \mathrm{e}^{\frac{h\delta_{n-1}\left(T_{n-1}\right)}{2}} S_{h+}\left(E_n, T_{n-1}\right) =F_{h,0}\left(z;E_n,T_{n-1}\right)$. Then by Proposition \ref{mainpro}, we have 
\begin{equation}\label{Sh+ eq}
     S_{h+}\left(E_n,T_{n-1}\right) Y_h U_{h,0}=\mathrm{e}^{-\frac{h\delta_{n-1}\left(T_{n-1}\right)}{2}} Y_h U_{h,-1}.
\end{equation}
Plugging \eqref{explicit Sh+} into \eqref{Sh+ eq}, we find that $\big(b^{(n)}_{h+}\big)_j$ satisfies $n$ equations as follows
\begin{equation*}
    \mathrm{e}^{-\frac{h\left(\ell^{\left(n-1\right)}_j+n-2\right)}{2}} \alpha^{\left(n-1\right)}_j \left(U_{h,0}\right)_{j k}+\big(b^{(n)}_{h+}\big)_j \left(U_{h,0}\right)_{n k}=\mathrm{e}^{-\frac{h\left(\ell^{\left(n-1\right)}_j+n-2\right)}{2}} \alpha^{\left(n-1\right)}_j \left(U_{h,-1}\right)_{j k}, \quad 1\leq k \leq n,
\end{equation*}
which imply that
\begin{equation*}
    \big(b^{(n)}_{h+}\big)_{j}=\mathrm{e}^{-\frac{h\left(\ell^{\left(n-1\right)}_j+n-2\right)}{2}} \alpha^{\left(n-1\right)}_j\left(\left(U_{-1}\right)_{j k}-\left(U_{h,0}\right)_{j k}\right)\left(U_0\right)_{n k}^{-1}.
\end{equation*}
We can rewrite $\left(U_0\right)_{n k}^{-1}$ and $\left(U_{-1}\right)_{j k}-\left(U_{h,0}\right)_{j k}$ into the following forms in the same way as \eqref{U0 nk} and \eqref{U1-U0}
\begin{equation}\label{Uh0 nk}
    \left(U_0\right)_{n k}^{-1}=\frac{h}{2\pi \mathrm{i} }\frac{\pi \prod_{l=1,l\neq j}^{n-1}\Gamma\left(\alpha_{n k,l}\right)\left(\ell^{\left(n\right)}_k-\ell^{\left(n-1\right)}_j\right){\left(h\mathrm{e}^{\frac{\pi \mathrm{i} }{2}}\right)^{\alpha_{nk,n}}}}{\mathrm{sin}\left(\frac{h}{2 \mathrm{i} }\left(\ell^{\left(n\right)}_k-\ell^{\left(n-1\right)}_j\right)\right)\Gamma\left(1-\frac{h}{2\pi \mathrm{i} }\left(\ell^{\left(n\right)}_k-\ell^{\left(n-1\right)}_j\right)\right)\prod_{l=1,l \ne k}^{n} \Gamma\left(\beta_{nk,l}\right)},
\end{equation}
and 
\begin{equation}\label{Uh1-Uh0}
    \left(U_{-1}\right)_{j k}-\left(U_{h,0}\right)_{j k}=\frac{1}{\left(h \mathrm{e}^{\frac{\pi\mathrm{i} }{2}}\right)^{\alpha_{jk,j}}} \frac{-2\mathrm{i} \cdot\mathrm{sin}\left(\alpha_{j k,j}\pi\right)}{\ell^{\left(n\right)}_k-\ell^{\left(n-1\right)}_j}\frac{\prod_{l=1,l\ne k}^{n} \Gamma\left(\beta_{jk,l}\right)}{\prod_{l=1,l\ne j}^{n-1} \Gamma\left(\alpha_{jk,l}\right)}\frac{\prod_{l=1,l\ne j}^{n-1} \Gamma\left(\alpha_{jk,l}-\alpha_{jk,j}\right)}{\prod_{l=1,l\ne k}^{n}\Gamma\left(\beta_{jk,l}-\alpha_{jk,j}\right)}.
\end{equation}
Combining \eqref{Uh0 nk} and \eqref{Uh1-Uh0}, we have
\begin{align*}
    \big(b^{(n)}_{h+}\big)_{j}
  &=-\mathrm{e}^{-\frac{h\left(\ell^{\left(n-1\right)}_j+n-2\right)}{2}}h \alpha^{\left(n-1\right)}_j \left(h \mathrm{e}^{\frac{\pi \mathrm{i}}{2}}\right)^{\alpha_{n k,n}-\alpha_{j k,j}}\frac{\prod_{l=1,l\neq j}^{n-1} \Gamma\left(1+\frac{h}{2\pi \mathrm{i} }\left(\ell^{\left(n-1\right)}_j-\ell^{\left(n-1\right)}_l\right)\right)}{\prod_{l=1}^{n}\Gamma\left(1+\frac{h}{2\pi \mathrm{i} }\left(\ell^{\left(n-1\right)}_j-\ell^{\left(n\right)}_l\right)\right)}\\
  &=-\frac{\left(h \mathrm{e}^{-\frac{\pi \mathrm{i}}{2}}\right)^{\frac{h}{2\pi \mathrm{i}}\left(\ell^{\left(n-1\right)}_j+n-2\right)}}{\left(h \mathrm{e}^{\frac{\pi \mathrm{i}}{2}}\right)^{\frac{h}{2\pi \mathrm{i}}\left(e_{n n}+1\right)}} \frac{\prod_{l=1,l\neq j}^{n-1} \Gamma\left(1+\frac{h}{2\pi \mathrm{i} }\left(\ell^{\left(n-1\right)}_j-1-\ell^{\left(n-1\right)}_l\right)\right)}{\prod_{l=1}^{n}\Gamma\left(1+\frac{h}{2\pi \mathrm{i} }\left(\ell^{\left(n-1\right)}_j-1-\ell^{\left(n\right)}_l\right)\right)} h \alpha^{(n-1)}_j.
\end{align*}
\end{proof}

\subsection{Extension of the Stokes matrices of quantum system \eqref{introqeq}}\label{qsecextension}
Similar to $P_{n-1}$ and $Q_{n-1}$ in the classical setting, $P_{h,n-1}$ and $Q_{h,n-1}$ also bring singularity. However, for a real number $h$, the system \eqref{introqeq} is nonresonant and Stokes matrices are well defined, so 
\begin{equation*}
S_{h\pm }\left(E_n, T\right)=\operatorname{diag}\left({P}_{h,n-1},1\right)S_{h\pm}\left(E_n, T_{n-1}\right)\operatorname{diag}\left({P}_{h,n-1},1\right)^{-1}
\end{equation*}
always holds as the case in Proposition \ref{main stokes}.
\begin{thm}\label{T expli qstokes}
When $h$ is a real number, the Stokes matrix $S_{h +}\left(E_{n},T\right)$ of the system \eqref{introqeq} takes the block matrix form
\begin{equation*}
S_{h+}\left(E_{n},T\right)=\left(\begin{array}{cc}
    \mathrm{e}^{\frac{-h T^{\left(n-1\right)}}{2}} & b_{h+}  \\
    0 & \mathrm{e}^{\frac{-h e_{nn}}{2}}
  \end{array}\right),
  \end{equation*}
where $b_{h+}=\left(\left(b_{h+}\right)_1,...,\left(b_{h+}\right)_{n-1}\right)^{\intercal}$ with entries as follows
\begin{equation*}
   \left(b_{h+}\right)_k= -\sum_{i=1}^{n-1}\sum_{j=1}^{n-1} \frac{(h\mathrm{e}^{-\frac{\pi \mathrm{i}}{2}})^{\frac{h}{2\pi \mathrm{i} }(\ell^{(n-1)}_i+n-2)}}{(h\mathrm{e}^{\frac{\pi \mathrm{i}}{2}})^{\frac{h }{2\pi \mathrm{i} }\left(e_{n n}+1\right)}} \frac{\prod_{l\neq i}^{n-1} \Gamma\left(1+\frac{h}{2\pi \mathrm{i} }(\ell^{(n-1)}_i-1-\ell^{(n-1)}_l)\right)}{\prod_{l=1}^{n}\Gamma\left(1+\frac{h}{2\pi \mathrm{i} }(\ell^{(n-1)}_i-1-\ell^{(n)}_l)\right)} (\mathrm{Pr}_{i})_{k j} \cdot h e_{j n}.
\end{equation*}
\end{thm}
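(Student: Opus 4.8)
The plan is to deduce the formula from Theorem \ref{q9} by conjugation, in complete analogy with the way Proposition \ref{main stokes} is obtained in the classical setting. Since $h$ is real, the system \eqref{introqeq} is nonresonant, so $S_{h+}(E_n,T)$ is regular and, as recorded at the beginning of Section \ref{qsecextension}, $S_{h+}(E_n,T)=\operatorname{diag}(P_{h,n-1},1)\,S_{h+}(E_n,T_{n-1})\,\operatorname{diag}(P_{h,n-1},1)^{-1}$, the poles carried by $P_{h,n-1}$ cancelling; it therefore suffices to perform this conjugation formally on the block form of $S_{h+}(E_n,T_{n-1})$ supplied by Theorem \ref{q9}. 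Because the last row and column of $\operatorname{diag}(P_{h,n-1},1)$ are trivial, the conjugation fixes the $(n,n)$ corner $\mathrm{e}^{-he_{nn}/2}$, turns the upper left $(n-1)\times(n-1)$ block $\mathrm{e}^{-hT^{(n-1)}_{n-1}/2}$ into $P_{h,n-1}\,\mathrm{e}^{-hT^{(n-1)}_{n-1}/2}\,P_{h,n-1}^{-1}$, and turns the column $b^{(n)}_{h+}$ into $P_{h,n-1}\,b^{(n)}_{h+}$.

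For the diagonal block I would invoke Proposition \ref{quantum PQ}: from $T^{(n-1)}\mathcal{P}_{h,n-1}=\mathcal{P}_{h,n-1}T^{(n-1)}_{n-1}$ and the fact that $T^{(n-1)}_{n-1}$ commutes with the diagonal matrix $1/\sqrt{D_{h,n-1}}$, one gets $T^{(n-1)}P_{h,n-1}=P_{h,n-1}T^{(n-1)}_{n-1}$, hence $P_{h,n-1}T^{(n-1)}_{n-1}P_{h,n-1}^{-1}=T^{(n-1)}$; applying the holomorphic functional calculus then gives $P_{h,n-1}\,\mathrm{e}^{-hT^{(n-1)}_{n-1}/2}\,P_{h,n-1}^{-1}=\mathrm{e}^{-hT^{(n-1)}/2}$, which is the upper left block in the statement.

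The main work is the column $b_{h+}=P_{h,n-1}\,b^{(n)}_{h+}$. Write the $i$-th entry of $b^{(n)}_{h+}$ from Theorem \ref{q9} as $(b^{(n)}_{h+})_i=-f_i\,h\alpha^{(n-1)}_i$, where $f_i$ is the scalar gamma-function prefactor, a function of the operators $\{\ell^{(n-1)}_l\}_l$ and $e_{nn}$ (the $\ell^{(n)}_l$ acting as scalars on $L(\lambda)$). By the definition \eqref{Tn-1} of $T_{n-1}$ one has $\alpha^{(n-1)}_i=\sum_{j=1}^{n-1}(P_{h,n-1}^{-1})_{ij}\,e_{jn}$, so $(b_{h+})_k=-\sum_{i,j}(P_{h,n-1})_{ki}\,f_i\,(P_{h,n-1}^{-1})_{ij}\,h\,e_{jn}$. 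Now $(\mathcal{P}_{h,n-1})_{ki}$ is a quantum minor of $T$ built only from the $e_{ab}$ with $a,b\le n-1$ and $b\neq n$, so by Proposition \ref{2} it commutes with every $\ell^{(n-1)}_l$ and with $e_{nn}$; since $1/\sqrt{D_{h,n-1}}$ is diagonal it too commutes with $f_i$, whence $(P_{h,n-1})_{ki}$ commutes with $f_i$ and the prefactor may be moved to the far left. Finally, merging the two adjacent factors $1/\sqrt{D_{h,n-1}}$ in $(P_{h,n-1})_{ki}(P_{h,n-1}^{-1})_{ij}$ into the single middle factor $1/D_{h,n-1}$ and using the quantum factorization \eqref{Pk=PQ} of the projector $\mathrm{Pr}_i$, one obtains $(P_{h,n-1})_{ki}(P_{h,n-1}^{-1})_{ij}=(\mathcal{P}_{h,n-1})_{ki}\,\frac{1}{(D_{h,n-1})_i}\,(\mathcal{Q}_{h,n-1})_{ij}=(\mathrm{Pr}_i)_{kj}$. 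Substituting back and writing out $f_i$ yields precisely the asserted expression for $(b_{h+})_k$.

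The step I expect to be the main obstacle is the noncommutative bookkeeping rather than any new idea: one must check carefully that the gamma-function prefactor $f_i$ genuinely commutes past the quantum-minor entries of $P_{h,n-1}$ and past $1/\sqrt{D_{h,n-1}}$, that expanding $\alpha^{(n-1)}_i$ via \eqref{Tn-1} does not disturb the $e_{nn}$-power hidden inside $f_i$, and that the identity $P_{h,n-1}T^{(n-1)}_{n-1}P_{h,n-1}^{-1}=T^{(n-1)}$ is compatible with the functional calculus used for the diagonal block. All of these reduce to the commutation relations of Proposition \ref{2} and Lemma \ref{commu rels}, together with the fact that the Gelfand--Tsetlin basis simultaneously diagonalizes the whole family $\{\ell^{(m)}_j\}$.
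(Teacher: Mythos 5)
Your proposal is correct and follows essentially the same route as the paper: conjugate the block form of $S_{h+}(E_n,T_{n-1})$ from Theorem \ref{q9} by $\operatorname{diag}(P_{h,n-1},1)$, commute the gamma-function prefactor past the entries of $\mathcal{P}_{h,n-1}$ (the paper proves exactly the commutation $[(\mathcal{P}_{h,n-1})_{ij},\ell^{(n-1)}_k]=0$ by writing those entries as commutators and checking on Gelfand--Tsetlin vectors, which is the precise form of the fact you attribute somewhat loosely to Proposition \ref{2}), and then identify $(\mathcal{P}_{h,n-1})_{ki}\frac{1}{(D_{h,n-1})_i}(\mathcal{Q}_{h,n-1})_{ij}=(\mathrm{Pr}_i)_{kj}$ via \eqref{Pk=PQ}, with the $1/(D_{h,n-1})_i$ singularities cancelling because the projector entries are regular. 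Your extra verification of the diagonal block via the intertwining relation and functional calculus is left implicit in the paper but is consistent with it.
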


\begin{proof}
For any Gelfand-Tsetlin basis vector $\xi_{\Lambda}\in L\left(\lambda\right)$, for $1\leq j,k \leq n-2$, we have
\begin{align*}
    &\left[\left(\mathcal{P}_{h,n-1}\right)_{i j},\ell^{\left(n-1\right)}_k\right]\xi_{\Lambda}=\left[\left[e_{i,n-1},(-1)^{n-1+j}\Delta^{1,...,n-3,n-2}_{1,...,n-3,n-2}\left(T\big(\ell^{(n-1)}_j\big)\right)\right],\ell^{\left(n-1\right)}_k\right]\xi_{\Lambda}=0,\quad 1\leq i\leq n-2,\\
    &\left[\left(\mathcal{P}_{h,n-1}\right)_{n-1, j},\ell^{\left(n-1\right)}_k\right]=\left[(-1)^{n-1+j}\Delta^{1,...,n-2}_{1,...,n-2}\left(T\big(\ell^{(n-1)}_j\big)\right),\ell^{\left(n-1\right)}_k\right]=0.
\end{align*}
So for $1\leq i,j,k \leq n-1$, $
    \left[\left(\mathcal{P}_{h,n-1}\right)_{i j},\ell^{\left(n-1\right)}_k\right]=0$, together with the identity
$
    \left(b_{h+}\right)_k=\sum_{i=1}^{n-1}\left(P_{h,n-1}\right)_{k i} \big(b^{(n)}_{h+}\big)_i
    $, we have
\begin{align*}
    &(b_{h+})_k\\
    =&-\sum_{i=1}^{n-1}\sum_{j=1}^{n-1} \frac{(h \mathrm{e}^{-\frac{\pi \mathrm{i}}{2}})^{\frac{h}{2\pi \mathrm{i}}\left(\ell^{\left(n-1\right)}_i+n-2\right)}\prod_{l\neq i}^{n-1} \Gamma\left(1+\frac{h}{2\pi \mathrm{i} }(\ell^{(n-1)}_i-1-\ell^{(n-1)}_l)\right)}{(h \mathrm{e}^{\frac{\pi \mathrm{i}}{2}})^{\frac{h}{2\pi \mathrm{i}}\left(e_{n n}+1\right)}\prod_{l=1}^{n}\Gamma\left(1+\frac{h}{2\pi \mathrm{i} }(\ell^{(n-1)}_i-1-\ell^{(n)}_l)\right)}  (\mathcal{Q}_{h,n-1})_{ki} \frac{1}{(D_{h,n-1})_{i}} (\mathcal{P}_{h,n-1})_{ij} \cdot he_{jn}\\
=& -\sum_{i=1}^{n-1}\sum_{j=1}^{n-1} \frac{(h\mathrm{e}^{-\frac{\pi \mathrm{i}}{2}})^{\frac{h}{2\pi \mathrm{i} }(\ell^{(n-1)}_i+n-2)}}{(h\mathrm{e}^{\frac{\pi \mathrm{i}}{2}})^{\frac{h }{2\pi \mathrm{i} }\left(e_{n n}+1\right)}} \frac{\prod_{l\neq i}^{n-1} \Gamma\left(1+\frac{h}{2\pi \mathrm{i} }(\ell^{(n-1)}_i-1-\ell^{(n-1)}_l)\right)}{\prod_{l=1}^{n}\Gamma\left(1+\frac{h}{2\pi \mathrm{i} }(\ell^{(n-1)}_i-1-\ell^{(n)}_l)\right)} (\mathrm{Pr}_{i})_{k j} \cdot h e_{j n}.
\end{align*}
Here the last row in the above identity is derived from \eqref{Pk=PQ} and the singularity in the first row arising from $\left(D_{h,n-1}\right)_i$ in the denominator does not matter, since the $\left(k,j\right)$-entry of the projection operator $\mathrm{Pr}_{i}$ is without singularity.
\end{proof}


\begin{thebibliography}{}
\addtolength{\itemsep}{-1.5 em}
\setlength{\itemsep}{-5pt}



\bibitem{Balser0}
W. Balser, {\it Formal power series and linear systems of meromorphic ordinary differential equations},
Springer-Verlag, New York, 2000.


\bibitem{Balser}
W. Balser, {\em Explicit Evaluation of the  Stokes’ Multipliers and  Central Connection Coefficients for Certain Systems of Linear  Differential Equations}, Nath. Nachr. 138 (1988) 131-144.

\bibitem{BJL}
W. Balser, W.B. Jurkat, and D.A. Lutz, {\em Birkhoff invariants and Stokes' multipliers for meromorphic linear differential equations}, J. Math. Anal. Appl. 71 (1979), 48-94.


\bibitem{DHMS}
A. D’Agnolo, M. Hien, G. Morando and C. Sabbah, {\em Topological computations of some Stokes
phenomena on the affine line}, Annales de l’Institut Fourier, Tome 70 (2020) no. 2, pp. 739–
808.


\bibitem{DM}
A. Duval and C. Mitschi, {\em Matrices de Stokes et groupe de Galois des equations hypergéométriques confluentes généralisées}, Pacific Journal of Math. 138, no.1 (1989): 25–56.
 
\bibitem{FMTV}
G. Felder, Y. Markov, V. Tarasov and A. Varchenko, {\em Differential equations compatible with KZ equations}, Math. Phys. Anal. Geom. 3 (2000), 139-177.

\bibitem{MG}
M. D. Gould, {{\em } On an infinitesimal approach to semisimple Lie groups and raising and lowering operators of O (n) and U (n)}, J. Math. Phys. 21, no.3 (1980): 444-453.

\bibitem{GT}
I. M. Gelfand and M. L. Tsetlin, {\em Finite-dimensional representations of the group of unimodular
matrices}, Dokl. Akad. Nauk SSSR 71 (1950), 825–828 (Russian). English transl. in: I. M.
Gelfand, “Collected papers”. Vol II, Berlin: Springer-Verlag 1988, pp. 653–656.


\bibitem{MH}
M.Hien, {\em Stokes Matrices for Confluent Hypergeometric Equations}, 
Int. Math. Res. Not, Issue 5, (2022), Pages 3511–3560.


\bibitem{HKbook}
J. Hong and S.-J. Kang, {\em Introduction to quantum groups and crystal bases}, American Mathematical Society, Graduate Studies in Mathematics, Vol. 42. (2002).

\bibitem{JMU}
M. Jimbo, T. Miwa and K. Ueno, {\em Monodromy preserving deformations of linear differential equations with rational coefficients I}, Physica 2D (1981), 306–352.


\bibitem{LuS}
D. Lutz and R. Sch\"afke, {\em Calculating connection coefficients for meromorphic differential equations,} Complex Variables and Elliptic
Equations 34.1-2 (1997), pp. 145–170.

\bibitem{LR}
M. Loday-Richaud, {\em Divergent series, summability and resurgence II. Simple and multiple summability}, vol. 2154 of
Lecture Notes in Mathematics, Springer, 2016.




\bibitem{MR}
B. Malgrange and J.-P. Ramis, {\em Fonctions multisommables}, Ann. Inst. Fourier (Grenoble) 42 (1992),
no. 1-2, 353–368.

\bibitem{Molev89} A.I. Molev, A. Ivanovich, {\em The Gelfand-Tsetlin basis for irreducible unitarizable representations u(p, q) with a leading weight}, Funct. Anal. its Appl. 26 (1989), no. 3, 236-238.

\bibitem{Molev} A.I. Molev, {\em Gelfand-Tsetlin bases for classical Lie algebras}, Handbook of algebra. Vol. 4, 109–170, Handb. Algebr., 4, Elsevier/North-Holland, Amsterdam, 2006.

\bibitem{OLBC}
F. W. J. Olver, D. W. Lozier, R. F. Boisvert and C. W. Clark, {\em NIST Handbook of Mathematical Functions}, Cambridge University Press, Cambridge, (2010).


\bibitem{Re}
N. Reshetikhin, {\em The Knizhnik-Zamolodchikov system as a deformation of the isomonodromy problem,} Lett. Math. Phys. 26 (1992), no. 3, 167–177.

\bibitem{FRT}
N. Reshetikhin, L. Takhtajan and L. Faddeev, {\em Quantization of Lie groups and Lie algebras}, Len. Math. J. 1 (1990), 193-225.



\bibitem{Wasow}
W. Wasow, {\em Asymptotic expansions for ordinary differential equations}, Wiley Interscience, New York,
1976.

\bibitem{Xu2}
X. Xu, {\em Representations of quantum groups arising from Stokes phenomenon}, arXiv: 2012.15673.

\bibitem{Xu3}
X. Xu, {\em Regularized limits of Stokes matrices, isomonodromy deformation and crystal basis}, arXiv:1912.07196v5.


\bibitem{Yokoyama1988}
T. Yokoyama, {\em On the structure of connection coefficients for hypergeometric systems},
Hiroshima Mathematical Journal.
18, 309--339, (1988).


\Addresses

\end{thebibliography}
\end{document}